\newtheorem{theorem}{Theorem}
\newtheorem{lemma}{Lemma}
\newtheorem{corollary}{Corollary}
\newtheorem{definition}{Definition}
\newtheorem{remark}{Remark}
\newcommand{\newweight}{2p+(p-1)/2}
\newcommand{\F}{\mathbb{F}}
\title{On the code generated by the incidence matrix of points and hyperplanes in $PG(n,q)$ and its dual}
\author{M. Lavrauw\and L. Storme \and G. Van de Voorde \thanks{This author's research was supported by the                 Institute for the Promotion of Innovation through Science
and Technology in Flanders (IWT-Vlaanderen).} }
\date{\today}
\begin{document}

\maketitle
\begin{abstract} In this paper, we study the $p$-ary linear code $C(PG(n,q))$, $q=p^h$, $p$ prime, $h\geq 1$, generated by the incidence matrix of points and hyperplanes of a Desarguesian projective space $PG(n,q)$, and its dual code. We link the codewords of small weight of this code  to blocking sets with respect to lines in $PG(n,q)$ and we exclude all possible codewords arising from small linear
blocking sets. 

We also look at the dual code of $C(PG(n,q))$ and we prove that finding the minimum weight of the
dual code can be reduced to finding the minimum
weight of the dual code of points and lines in $PG(2,q)$. We present an improved upper
bound on this minimum weight and we show that we can drop the divisibility condition on the weight of the codewords in Sachar's lower bound \cite{sachar}.
\end{abstract}
\section{Introduction}
In this paper, we denote the $n$-dimensional projective space over the finite field of order $q$, where $q=p^h$, $p$ prime, $h\geq 1$, by $PG(n,q)$. Let $\theta_n$ denote the number of points in $PG(n,q)$, i.e., $\theta_n=(q^{n+1}-1)/(q-1)$, and let $V(n+1,q)$ denote the underlying vector space.

This research is a natural extension of the results on the $p$-ary linear code generated by points and lines of a projective plane $PG(2,q)$, with $q=p^h$, $p$ prime, $h\geq 1$. The minimum weight and the nature of the minimum weight codewords of the $p$-ary linear codes generated by the incidence matrix of points and lines of projective planes, have been established in the 1960s, after Prange \cite{prange} and Rudolph \cite{rudolph}  recognized that projective planes could be used to produce error-correcting codes.  The codewords of minimal weight are the scalar multiples of the incidence vectors of the lines of $PG(2,q)$ \cite[Theorem 6.3.1]{AK}.
In \cite{chouinard},  Chouinard investigates the codewords of small weight in this code. In particular, when $q$ is prime, the following result is proven.
\begin{theorem}\label{ch}\cite{chouinard} (1) In the $p$-ary linear code arising from $PG(2,p)$, $p$ prime, there are no codewords with weight in $[p+2,2p-1]$.

(2) The codewords of weight $2p$ in the $p$-ary linear code arising from $PG(2,p)$, $p$ prime, are the scalar multiples of the differences of the incidence vectors of two lines of $PG(2,p)$.
\end{theorem}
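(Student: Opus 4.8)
The plan is to exploit the special arithmetic of $PG(2,p)$ over $\F_p$: any two lines, equal or distinct, meet in a number of points congruent to $1 \pmod p$, since distinct lines meet in one point while a line carries $p+1\equiv 1$ points. Writing a codeword as $c=\sum_m \lambda_m v_m$, where $v_m$ is the incidence vector of the line $m$, the sum of $c$ along any line $\ell$ is $\sum_{P\in\ell}c(P)=\sum_m\lambda_m\,|\ell\cap m|\equiv\sum_m\lambda_m=:T$, independently of $\ell$. Thus every codeword of $C(PG(2,p))$ has a \emph{constant line-sum} $T\in\F_p$, and I would make this the organizing principle of the proof, splitting into the cases $T\neq 0$ and $T=0$.

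First I would record the elementary dichotomy at a single support point. Let $S=\mathrm{supp}(c)$ and let $P\in S$ with $c(P)=a$. A line through $P$ tangent to $S$ (meeting $S$ only in $P$) has line-sum $a$, so a tangent can occur at $P$ only when $a=T$. Hence, if $c$ has \emph{any} support value different from $T$, every line through the corresponding point is at least a $2$-secant, forcing $w=|S|\ge p+2$; whereas if \emph{all} support values equal $T$ (so $T\neq 0$), the constant-line-sum condition reads $|\ell\cap S|\equiv 1\pmod p$ for every $\ell$, which quickly forces $S$ to be a single line and $c$ a scalar multiple of it, of weight $p+1$. This recovers the minimum-weight description and shows that every codeword of weight in $[p+2,2p]$ must carry a value $\neq T$, hence admits no tangent through that point.

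Next I would translate each case into a geometric condition on $S$ and invoke the structure theory of point sets in $PG(2,p)$, $p$ prime. If $T\neq 0$, then every line meets $S$, so $S$ is a blocking set; moreover a tangent can only touch a value-$T$ point, so if $S$ had no value-$T$ point it would be a \emph{double} blocking set, and a flag count gives $|S|(q+1)=\sum_\ell|\ell\cap S|\ge 2(q^2+q+1)$, i.e.\ $w\ge 2p+1>2p$, a contradiction. Excluding the remaining blocking sets of size at most $2p$ that are not lines is where I would call on Blokhuis' lower bound for nontrivial blocking sets in $PG(2,p)$, together with a short counting argument, to conclude that the only $T\neq 0$ codewords of weight at most $2p$ are the lines. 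If $T=0$, then no line is tangent to $S$, so every line meets $S$ in $0$ or at least $2$ points. I would then count $2$-secants through a fixed point of $S$: once $w\le 2p$, at most $p-2$ of the $p+1$ lines through it can be $3$-or-more-secants, so at least three are exact $2$-secants, and along each such secant the two support values sum to $T=0$, hence are negatives of one another. Combined with the non-existence of small tangent-free sets for odd $p$ (a tangent-free \emph{arc} would be a hyperoval, which does not exist), this should force $w\ge 2p$, with equality producing the symmetric difference of two lines and identifying $c$ as a scalar multiple of $v_{\ell_1}-v_{\ell_2}$.

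The main obstacle I anticipate is the two structural classifications in the intermediate range, not the bookkeeping: excluding blocking sets of size strictly between $p+1$ and $2p+1$ in the $T\neq 0$ case, and proving the sharp bound $w\ge 2p$ with the exact two-line description in the $T=0$ case. Both are genuinely special to $p$ prime—they fail for $q=p^h$ because of Baer subplanes and hyperovals—so I expect the heart of the argument to be precisely the blocking-set and arc results of Blokhuis and related authors, while the linear-algebraic reduction is elementary once the constant-line-sum lemma is in hand.
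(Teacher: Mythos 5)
First, a point of reference: the paper does not prove this theorem at all --- it is quoted from Chouinard's thesis \cite{chouinard} --- so your proposal can only be measured against the analogous machinery the paper builds for general $n$ (Lemmas \ref{lem4}--\ref{lem6b}, Corollary \ref{cor1}, Lemma \ref{lem:1}). Your skeleton (constant line-sum $T$, tangents forcing the value $T$, the case split $T\neq 0$ versus $T=0$) is exactly that machinery specialised to the plane, and your treatment of the ``all values equal $T$'' subcase is correct. The first genuine gap is in the $T\neq 0$ case for weights in $[p+2,2p]$: Blokhuis' bound says a nontrivial minimal blocking set in $PG(2,p)$ has at least $3(p+1)/2$ points, and since $3(p+1)/2\leq 2p$ for $p\geq 3$ (projective triangles attain it), that bound does \emph{not} exclude the sizes you need to exclude. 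The step that actually closes this case is showing that $c$ is constant (equal to $T$) on its entire support, which is where the paper's Corollary \ref{cor1} and Lemma \ref{lem6b} do their work: one reduces $supp(c)$ uniquely to a minimal blocking set whose points all lie on tangents (hence carry value $T$), then finds, through any alleged point of value $\neq T$, a $2$-secant meeting the minimal part, forcing that value to be $0$. Once constancy is known, $supp(c)$ meets every line in $1\pmod p$ points and the elementary argument of Lemma \ref{lem:1} (no Blokhuis needed) forces a line. Your ``short counting argument'' is precisely this missing chain, and it is the heart of the proof, not bookkeeping.

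The second and more serious gap is the $T=0$ case, which is where both the bound $wt(c)\geq 2p$ and all of part (2) live. A codeword with $T=0$ lies in $C\cap C^{\perp}$, and its support is a set met by every line in $0$ or at least $2$ points; such a set need not be an arc, so the non-existence of hyperovals for $p$ odd says nothing about it. (Indeed, tangent-free point sets of size below $2q$ do exist for suitable non-prime $q$; it is the $\mathbb{F}_p$-linear condition that the values on each secant sum to zero, not the support geometry alone, that rescues the bound.) Your observation that each point lies on at least three exact $2$-secants carrying opposite values is a reasonable start, but ``this should force $w\geq 2p$'' is asserted, not proved; the statement that the minimum weight of $C\cap C^{\perp}$ in the plane is $2q$ is a nontrivial theorem (see \cite{AK}, and \cite{Bagchi} for the classification), which the paper itself imports rather than reproves. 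Finally, part (2) --- that every weight-$2p$ codeword is a scalar multiple of a difference of two lines --- receives no argument at all in your proposal: you must both rule out weight-$2p$ codewords with $T\neq 0$ (your $T\neq 0$ analysis stops at ``at most $2p$'' without treating equality) and classify the weight-$2p$ elements of $C\cap C^{\perp}$, which is the content of the Bagchi--Inamdar result quoted after Theorem \ref{th11}. As it stands the proposal proves the easy lower bound $w\geq p+2$ and the minimum-weight characterisation, but not the two statements actually claimed.
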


In \cite{fack}, this result was extended to codewords of larger weight in the following theorem.
\begin{theorem} \cite{fack}
The only codewords $c$, with $0< wt(c) \leq \newweight$, in the $p$-ary linear code $C$ arising from $PG(2,p)$, $p$ prime, $p\geq 11$, are:
\begin{itemize}
\item codewords with weight $p+1$: the scalar multiples of the incidence vectors of the  lines of $PG(2,p)$, 
\item codewords with weight $2p$: $\alpha(c_1-c_2)$, $c_1$ and $c_2$ the incidence vectors of two distinct lines of $PG(2,p)$,
\item codewords with weight $2p+1$: $\alpha c_1 + \beta c_2,~\beta \neq - \alpha$, with $c_1$ and $c_2$ the incidence vectors of two distinct lines of $PG(2,p)$.
\end{itemize}
\end{theorem}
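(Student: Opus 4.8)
The plan is to reduce the whole problem to a single structural constraint on codewords and then feed that constraint into the theory of blocking sets. Write $c=\sum_m a_m v_m$, where $v_m$ is the incidence vector of the line $m$. For any line $\ell$ the value $\sum_{P\in\ell}c(P)=c\cdot v_\ell=\sum_m a_m\,(v_m\cdot v_\ell)$, and since two distinct lines meet in exactly one point while $v_\ell\cdot v_\ell=p+1$, this equals $\sum_m a_m + p\,a_\ell$, which over $\F_p$ collapses to $\sigma:=\sum_m a_m$, independent of $\ell$. So the first step is to record the key lemma: \emph{the values of any codeword of $C(PG(2,p))$ sum to the same constant $\sigma$ on every line}. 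This immediately splits the analysis into the case $\sigma\neq 0$ and the case $\sigma=0$, the latter being exactly the codewords lying in the hull $C\cap C^{\perp}$ (since $\sigma=0$ is equivalent to $c\cdot v_\ell=0$ for all $\ell$).

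When $\sigma\neq 0$ every line meets the support $S=\operatorname{supp}(c)$, so $S$ is a blocking set of size $w=wt(c)\le\newweight$. I would first dispose of $w=p+1$: the smallest blocking sets are lines, and the constant-sum condition then forces $c=\sigma v_\ell$. For larger $w$ I would coordinatise with $\ell_\infty$ chosen through the support and form the (weighted) R\'edei polynomial of the affine part; the constant line-sum conditions make this polynomial lacunary, and the Blokhuis--Sz\H{o}nyi classification of small minimal blocking sets / lacunary polynomials should force $S$ to contain a full line $\ell_1$. One then subtracts the appropriate scalar multiple of $v_{\ell_1}$, which lowers the weight and shifts $\sigma$ in a controlled way, and repeats on the residual codeword. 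Since $\newweight<3(p+1)$, the support of $c$ cannot contain three lines, so at most two lines are peeled off, giving $S\subseteq\ell_1\cup\ell_2$ and hence $c=\alpha v_{\ell_1}+\beta v_{\ell_2}$; reading off weights yields exactly $p+1$ (one line) and $2p+1$ (two lines with $\alpha+\beta\neq 0$), with no admissible value strictly between.

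When $\sigma=0$ the support carries no $1$-secants — every line is external or meets $S$ in at least two points — so $S$ need not be a blocking set and $c$ lies in the hull; note this already forces $w\ge p+2$ by counting the $p+1$ lines through a point of $S$. Here I would run the symmetric argument: the no-tangent condition together with $w\le\newweight$ again produces, via the same lacunary-polynomial machinery, two lines $\ell_1,\ell_2$ whose union contains $S$, and then the constraint $\sigma=0$ pins the codeword down to $c=\alpha(v_{\ell_1}-v_{\ell_2})$ of weight $2p$. Combined with Chouinard's determination of the weight-$2p$ codewords and the emptiness of $[p+2,2p-1]$ from Theorem~\ref{ch}, this closes the $\sigma=0$ case.

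The main obstacle is the exclusion of the intermediate weights $2p+2\le w\le\newweight$. This interval is genuinely delicate: nontrivial minimal blocking sets of $PG(2,p)$ do exist with sizes in $[\,3(p+1)/2,\,\newweight\,]$, so a size bound alone cannot exclude them, and it is precisely the extra constant-line-sum structure — processed through the degree and lacunarity estimates for the associated polynomials — that must be shown to be incompatible with any such configuration. I expect the bulk of the work, and the only genuinely hard part, to be this uniform "\,$S$ contains a line\," (respectively "\,$S$ lies in two lines\,") step across the whole weight range; this is also where the hypothesis $p\ge 11$ enters, guaranteeing the sharp Blokhuis--Sz\H{o}nyi bounds and keeping the relevant polynomial degrees small enough to force a line into the support.
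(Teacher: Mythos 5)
First, a point of order: the paper you were given does not prove this theorem at all --- it is quoted from \cite{fack} and used as a black box, so there is no ``paper's own proof'' to compare your attempt against. Judged on its own terms, your outline begins correctly: the computation showing that $(c,\ell)=\sigma$ is the same for every line $\ell$ is right, the split into $\sigma\neq 0$ (support is a blocking set) versus $\sigma=0$ (codeword in the hull $C\cap C^{\perp}$) is the standard first move, the case $wt(c)=p+1$ is handled correctly via Bose--Burton, and you correctly locate where the difficulty lies.

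However, as written the argument has three genuine gaps. (i) The central claim --- that for every weight in $[2p+2,\newweight]$ the constant-line-sum structure forces a full line inside $supp(c)$ --- is asserted (``should force'') rather than proved. Blokhuis's bound $3(p+1)/2$ does not exclude non-trivial minimal blocking sets in this size range (the projective triangle already sits at the bottom of it), and you do not invoke any concrete lacunary-polynomial statement that applies to a \emph{weighted} point set with prescribed line sums rather than to a plain blocking set; this is exactly the content of the theorem, not a routine application of known machinery. (ii) The peeling step needs $c$ to be constant on the line $\ell_1\subseteq supp(c)$ before $c-\sigma v_{\ell_1}$ can be guaranteed to have smaller weight. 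Since $\vert supp(c)\setminus \ell_1\vert$ can be as large as $(3p-3)/2>p$, you cannot assume that every point of $\ell_1$ lies on a tangent line to $supp(c)$, so this constancy requires a separate argument. (iii) In the case $\sigma=0$ with $2p<wt(c)\leq \newweight$, the support is a set without tangent lines and need not be a blocking set at all, so ``the same lacunary-polynomial machinery'' does not transfer; Chouinard's theorem and the determination of the minimum-weight hull codewords only cover weights up to $2p$, which leaves the interval $]2p,\newweight]$ of the hull genuinely open in your sketch.
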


Moreover, in \cite{fack}, the first part of Theorem \ref{ch} was extended to $\F_{p^3}$.
\begin{theorem}\cite{fack}
In the $p$-ary linear code of $PG(2,p^3)$, $p$ prime, $p\geq 7$, there are no codewords with weight in the interval $[p^3+2,2p^3-1]$.
\end{theorem}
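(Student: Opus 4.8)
The plan is to exploit the single most useful invariant of this code. For any codeword $c\in C(PG(2,p^3))$, write $c=\sum_m \lambda_m v_m$ as an $\F_p$-combination of incidence vectors of lines; then for an arbitrary line $\ell$,
$\sum_{P\in\ell} c(P)=\sum_m \lambda_m\,|\ell\cap m|=\sum_m \lambda_m + p^3\lambda_\ell=\sum_m\lambda_m=:\sigma$,
since $p^3\equiv 0\pmod p$. Thus the coordinate sum of $c$ along every line equals the same constant $\sigma\in\F_p$, and I would split the argument according to whether $\sigma=0$ or $\sigma\neq 0$. Note that $\sigma=0$ is exactly the condition $c\in C\cap C^{\perp}$.

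First I would dispose of the case $\sigma=0$. Here $\mathrm{supp}(c)$ can have no tangent line: a line meeting the support in a single point $P$ would force $c(P)=\sigma=0$. So $\mathrm{supp}(c)$ meets every non-external line in at least two points, and a counting argument over the pencil of lines through a fixed point of the support, sharpened by the $\bmod\,p$ cancellation coming from $\sigma=0$, should push $wt(c)$ up to at least $2p^3$. The difference $v_{\ell_1}-v_{\ell_2}$ of two line vectors, of weight $2p^3$, shows this is best possible, and this removes the whole interval $[p^3+2,\,2p^3-1]$ from the $\sigma=0$ part of the code.

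The substantial case is $\sigma\neq 0$; after scaling we may assume $\sigma=1$, so that every line meets $\mathrm{supp}(c)$ and $B:=\mathrm{supp}(c)$ is a blocking set with $|B|=wt(c)\in[p^3+2,\,2p^3-1]$. I would invoke the classification of small minimal blocking sets in $PG(2,p^3)$ (the linear, R\'edei-type sets studied by Sz\H{o}nyi, Sziklai and Polverino), whose smallest non-trivial member has size $p^3+p^2+1$ and which meet every line in $1\pmod p$ points. Consequently, if $|B|\le p^3+p^2$, then the minimal blocking set contained in $B$ cannot be non-trivial and must be a line $\ell\subseteq B$, so $B$ differs from $\ell$ in at most $p^2-1$ further points; analysing the line-sum condition on the pencils through these extra points (each tangent of $B$ forces the value $1$ at its point) should then force $B=\ell$, i.e. $wt(c)=p^3+1$, excluding the lower sub-interval $[p^3+2,\,p^3+p^2]$.

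The main obstacle is the remaining range $[p^3+p^2+1,\,2p^3-1]$, where small (and medium) blocking sets of exactly these sizes genuinely exist, so nonexistence of blocking sets is no longer available and one must instead show that \emph{these blocking sets are not supports of codewords}. The point is that being a blocking set is strictly weaker than lying in $C$, which additionally demands orthogonality to all of $C^{\perp}$. I expect the cleanest route is the lacunary/R\'edei-polynomial method: pass to affine coordinates, associate to $c$ a fully reducible lacunary polynomial, and read off the admissible weights from the gaps in the possible degrees of its factors, showing that below $2p^3$ no weight other than $p^3+1$ survives; the linear blocking sets of size $p^3+p^2+1$ fail precisely this stronger algebraic condition. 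The hypothesis $p\ge 7$ is what makes the governing inequalities hold and the small-blocking-set structure theorems applicable. As a consistency check, the line ($wt=p^3+1$) and the difference of two lines ($wt=2p^3$) sit exactly at the two ends of the forbidden interval, which is the behaviour one should expect.
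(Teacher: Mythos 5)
There is a genuine gap, and it sits exactly where the real difficulty of this theorem lies. Your reduction steps are sound and match the standard route: the line sums of a codeword are constant, the case $\sigma=0$ is handled by the minimum weight $2q$ of $C\cap C^\perp$, and in the case $\sigma\neq 0$ the support is a minimal blocking set meeting every line in $1\pmod{p}$ points (though to get minimality and two-valuedness you still need the unique-reduction argument of Corollary \ref{cor1} and Lemma \ref{lem6b}, not just the observation about tangents). You are also right that the surviving candidates are the non-trivial small minimal blocking sets of $PG(2,p^3)$, which by Polverino's classification are linear of size $p^3+p^2+1$ or $p^3+p^2+p+1$ (and the $1\pmod{p^e}$ size bounds, valid for $p\geq 7$, are what confine everything below $3(q+1)/2$ so that this classification applies — a step you should make explicit). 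But at the decisive moment — showing that these genuinely existing blocking sets are \emph{not} supports of codewords — you only write that you ``expect'' a lacunary/R\'edei-polynomial argument to work. No such argument is given, and I do not believe one exists in that form: the R\'edei polynomial machinery detects which point sets can be blocking sets and constrains their intersection numbers, but it is blind to the extra algebraic condition of membership in the row space of the incidence matrix. You have in effect conflated the classification of the blocking sets with their exclusion as codewords.

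The actual mechanism, both in the cited source and in Section 2 of this paper (Theorem \ref{the5} and Theorem \ref{th4}), is quite different: given a small linear blocking set $B$ that is not a line, one constructs a second small linear blocking set $B'$ meeting $B$ in $2\pmod{p}$ points (by perturbing the defining $\F_p$-subspace in the field-reduction model), notes that the incidence vector of $B'$ lies in $(C\cap C^\perp)^\perp$ because $|B'\cap \ell|\equiv 1\pmod{p}$ for every line $\ell$ (Lemma \ref{lem7}), and then derives from Lemma \ref{lem8} that if $B$ were the support of a codeword one would have $|B\cap B'|\equiv |B|\equiv 1\pmod{p}$, contradicting $|B\cap B'|\equiv 2\pmod{p}$. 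Without this (or some substitute such as the code-dimension/rank arguments used for Baer subplanes in the $q=p^2$ case), the interval $[p^3+p^2+1,\,2p^3-1]$ remains open, so the proposal as written does not prove the theorem.
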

\begin{remark}\label{remkw}
The same result holds for $\F_{p^2}$, $p$ prime, which can be deduced easily in the same way as the authors do in \cite{fack}. 

Namely, it is known that a codeword of weight in $]p^2+1,2p^2[$ in the $p$-ary linear code of $PG(2,p^2)$, $p$ prime, is a scalar multiple of the incidence vector of a non-trivial minimal blocking set  in $PG(2,p^2)$, $p$ prime, intersecting every line in $1\pmod{p}$ points \cite{chouinard}, \cite[Lemma 1]{fack}. The only such non-trivial minimal blocking sets are the Baer subplanes \cite{SZ}, but these are not codewords in the $p$-ary linear code of $PG(2,p^2)$ \cite[Proposition 6.6.3]{AK}.

Hence, we obtain the following result. 
\end{remark}

\begin{theorem}\label{th:1}
The $p$-ary linear code of $PG(2,p^2)$, $p$ prime, does not have codewords with weight in $[p^2+2,2p^2-1]$.
\end{theorem}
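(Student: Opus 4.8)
The plan is to follow the strategy already sketched in Remark~\ref{remkw}, which reduces the statement to the combination of three known ingredients: a reduction of small-weight codewords to blocking sets, a classification of the relevant blocking sets, and a non-existence result for their incidence vectors. Since the interval $[p^2+2,2p^2-1]$ is exactly the set of integers lying strictly between $p^2+1$ and $2p^2$, I would phrase everything in terms of this open range.

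First I would invoke the reduction result of Chouinard and Fack (\cite{chouinard}, \cite[Lemma 1]{fack}): any nonzero codeword $c$ of the $p$-ary code of $PG(2,p^2)$ whose weight lies strictly between $p^2+1$ and $2p^2$ is a scalar multiple $\alpha v_B$ of the incidence vector $v_B$ of a non-trivial minimal blocking set $B$ of $PG(2,p^2)$ meeting every line in $1\pmod p$ points. Because $\alpha\neq 0$, the support of $c$ is exactly $B$, so $wt(c)=|B|$; hence it suffices to prove that no such blocking set has an incidence vector lying in $C(PG(2,p^2))$.

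Next I would apply the classification in~\cite{SZ}: the only non-trivial minimal blocking sets of $PG(2,p^2)$ meeting every line in $1\pmod p$ points are the Baer subplanes $PG(2,p)$. Such a subplane has exactly $p^2+p+1$ points and meets every line in either $1$ or $p+1$ points, both congruent to $1$ modulo $p$, so it is indeed the unique candidate. In particular, for every admissible weight in $[p^2+2,2p^2-1]$ different from $p^2+p+1$ there is simply no blocking set of the required type, and therefore no codeword of that weight. For the single remaining candidate weight $p^2+p+1$ I would then invoke~\cite[Proposition 6.6.3]{AK}, which states that the incidence vector of a Baer subplane of $PG(2,p^2)$ is not a codeword of $C(PG(2,p^2))$. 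Putting the three steps together rules out every weight in $[p^2+2,2p^2-1]$, which is the assertion.

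The step requiring the most care is not a computation but the verification that the hypotheses of the three cited results line up: one must check that the prime-power $q=p^2$ falls within the scope of the reduction lemma exactly as $q=p$ and $q=p^3$ do in~\cite{fack} (this is the content of Remark~\ref{remkw}), and that the size $p^2+p+1$ and the $1\pmod p$ intersection numbers of the Baer subplane lie within the range covered by the classification of~\cite{SZ}. Once these compatibilities are confirmed, no genuinely new argument is needed, and the theorem follows as a direct consequence of the cited results.
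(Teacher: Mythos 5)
Your proposal follows exactly the argument the paper gives in Remark~\ref{remkw}: reduce a codeword of weight in $]p^2+1,2p^2[$ to a non-trivial minimal blocking set meeting every line in $1\pmod p$ points via \cite{chouinard} and \cite[Lemma 1]{fack}, identify the only candidates as Baer subplanes via \cite{SZ}, and exclude those via \cite[Proposition 6.6.3]{AK}. This is correct and essentially identical to the paper's own proof.
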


The goal of the first section of this paper is to prove similar results for general dimension $n$ and field order $q$. 

We know that the codewords of minimum weight in the $p$-ary linear code defined by the incidence matrix of points and hyperplanes of $PG(n,q)$, $q=p^h$, $p$ prime, $h\geq 1$,  are the scalar multiples of the incidence vectors of the hyperplanes of $PG(n,q)$ \cite[Proposition 5.7.3]{AK}. We will study codewords of weight in $]\theta_{n-1},2q^{n-1}[$, and  show that there is a gap in the weight enumerator of this code by excluding as many weights as possible in this interval.  More precisely, we will show that there are no codewords with weight between the weight of a hyperplane and the symmetric difference of two hyperplanes for $q=p$ and $q=p^2$, $p>11$, $p$ prime.
Corollary \ref{gevolg} proves the analogous statement of Theorem \ref{ch} (1) for general dimension.  Extending the theorem for codes $C(PG(n,q))$, over an arbitrary finite field $\mathbb{F}_q$, is harder. Here we show that a codeword of weight in $]\theta_{n-1},2q^{n-1}[$ corresponds to a minimal blocking set in $PG(n,q)$, and we exclude all linear blocking sets with weight in $]\theta_{n-1},2q^{n-1}[$ as codewords. We also prove that the weights of the codewords of weight in $]\theta_{n-1},2q^{n-1}[$ can only lie in a number of small intervals, and that there are no codewords with weight in $[3q^{n-1}/2,2q^{n-1}[$. In this way, half of the interval is eliminated. If $q$ is the square of a prime, this proves the statement of Remark \ref{remkw} and Theorem \ref{th:1} in general dimension.\\

The situation regarding the dual of the code generated by the incidence matrix of points and lines in $PG(2,q)$ is different. In this case, the minimum weight of the dual code is not known in general, although some bounds are given (see Assmus and Key \cite{AK} and Sachar \cite{sachar}).
We extend these results to general dimension by proving that the minimum weight of the dual code generated by the incidence matrix of points and hyperplanes in $PG(n,q)$ is equal to the minimum weight of the dual code generated by the incidence matrix of points and lines in $PG(2,q)$.
Moreover, we present an improved upper bound on this minimum weight and we show that we can drop the divisibility condition on the weight of the codewords in Sachar's lower bound.

\section{Small weight codewords in the code generated by the incidence matrix of points and hyperplanes in $PG(n,q)$}
In this section, we investigate the codewords of small weight in the linear code generated by the incidence matrix of points and hyperplanes in $PG(n,q)$. We define the incidence matrix~$A = (a_{ij})$ 
of the projective space $PG(n,q)$, $q=p^h$,  $p$~prime, $h\geq 1$, 
as the matrix whose rows are 
indexed by hyperplanes of the space and whose columns are indexed
by points of the space, and 
with entry
$$ 
a_{ij} = \left\{
\begin{array}{ll}
1 & \textrm{if point $j$ belongs to hyperplane $i$,}\\
0 & \textrm{otherwise.}
\end{array} 
\right.
$$

The $p$-ary linear code $C$ of the projective space $PG(n,q)$, $q=p^h$, $p$ prime, $h\geq 1$,  is 
the $\mathbb{F}_p$-span of the rows of the  incidence matrix $A$. The support of a codeword $c$, denoted by $supp(c)$, is the set of all non-zero positions of $c$. We identify this set of positions with the set of corresponding points of $PG(n,q)$. Let $c_P$ denote the symbol of the codeword $c$ in the coordinate position corresponding to the point $P$.
We denote the scalar product of two vectors $v_1,v_2$, calculated over $\F_p$, by $(v_1,v_2)$.

The dual code $C^\bot$ is the set of all vectors orthogonal to all codewords of $C$, hence
$$C^\bot=\{ v\in V(\theta_n,p) \vert \vert(v,c)= 0,\ \forall c\in C\}.$$

From now on, we denote the $p$-ary linear code of points and hyperplanes of $PG(n,q)$, $q=p^h,p$ prime, $h\geq1$, by $C$ and its dual code by $C^\bot$. If we want to point out the dimension and field of the considered space, we write $C(PG(n,q))$ and $C(PG(n,q))^\bot$, respectively.  For convenience of notation, we identify a space with its incidence vector, hence the symbol $l$ stands for the line $l$ or the incidence vector of $l$, depending on the context.

\begin{lemma}  \label{1}If $U_1$ and $U_2$ are subspaces of dimension at least $1$ in $PG(n,q)$, then $U_1-U_2\in C^\bot$.
\end{lemma}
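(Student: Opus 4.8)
The plan is to use the definition of $C^{\bot}$ as the set of vectors orthogonal to every generator of $C$. Since $C$ is the $\F_p$-span of the incidence vectors of the hyperplanes of $PG(n,q)$, it suffices to show that $(U_1-U_2,H)=0$ over $\F_p$ for every hyperplane $H$, or equivalently that $(U_1,H)=(U_2,H)$ for all $H$.

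First I would observe that, since each $U_i$ and each $H$ is identified with its $\{0,1\}$-incidence vector, the scalar product $(U_i,H)$ computed over $\F_p$ is exactly the number of points of $U_i\cap H$, reduced modulo $p$. Thus the lemma reduces to the purely combinatorial statement that $|U_1\cap H|\equiv |U_2\cap H|\pmod p$ for every hyperplane $H$.

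Next I would compute $|U\cap H|\pmod p$ for an arbitrary subspace $U$ of dimension $d\geq 1$. There are two cases. If $U\subseteq H$, then $U\cap H=U$ has $\theta_d$ points; if $U\not\subseteq H$, then $U\cap H$ is a hyperplane of $U$, i.e.\ a subspace of dimension $d-1$, with $\theta_{d-1}$ points. The key point is that both of these numbers are congruent to $1$ modulo $p$: writing $\theta_m=1+q+\cdots+q^m$ and using $q=p^h\equiv 0\pmod p$ (as $h\geq 1$), every term carrying a positive power of $q$ vanishes mod $p$, so $\theta_m\equiv 1\pmod p$ for all $m\geq 0$. This is precisely where the hypothesis $\dim U\geq 1$ enters: it guarantees $d-1\geq 0$, so that even in the second case the count is a genuine $\theta_{d-1}\equiv 1$, whereas a dimension-$0$ subspace (a single point) would meet $H$ in either $1$ or $0$ points and the argument would fail.

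Therefore $|U_i\cap H|\equiv 1\pmod p$ for both $i$ and for every hyperplane $H$, whence $(U_1,H)=(U_2,H)$ over $\F_p$ and $(U_1-U_2,H)=0$. As $H$ ranges over a generating set of $C$, this yields $U_1-U_2\in C^{\bot}$. I do not expect a genuine obstacle here; the only thing requiring care is the correct identification of the two intersection cardinalities and the mod-$p$ reduction of $\theta_m$, together with the observation that the dimension hypothesis cannot be dropped.
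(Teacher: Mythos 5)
Your proof is correct and follows the same approach as the paper's: the paper simply asserts $(H,U_i)=1$ for any subspace $U_i$ of dimension at least $1$ and any hyperplane $H$, which is exactly the fact you establish in detail by computing $|U_i\cap H|$ as $\theta_d$ or $\theta_{d-1}$ and reducing modulo $p$. You have merely spelled out the justification the paper leaves implicit.
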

\begin{proof} For every subspace $U_i$ of dimension at least $1$ and every hyperplane $H$, $(H,U_i)=1$, hence $(H,U_1-U_2)=0$, so $U_1-U_2\in C^\bot$.
\end{proof}
Note that in Lemma \ref{1}, $\dim U_1 \neq \dim U_2$ is allowed.
\begin{lemma} \label{lem4}The scalar product $(c,U)$, with $c\in C$ and $U$ an arbitrary subspace of dimension at least $1$, is a constant.
\end{lemma}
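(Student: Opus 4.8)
The plan is to deduce the statement directly from Lemma \ref{1}. To say that $(c,U)$ is constant means that, for a fixed codeword $c\in C$, its value does not depend on the choice of the subspace $U$ of dimension at least $1$. Hence it suffices to prove that $(c,U_1)=(c,U_2)$ for any two such subspaces $U_1,U_2$.

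First I would invoke Lemma \ref{1}, which gives $U_1-U_2\in C^\bot$. Crucially, the remark following that lemma allows $U_1$ and $U_2$ to have different dimensions, so no restriction on $\dim U_i$ is needed for this comparison. By the definition of the dual code, every element of $C^\bot$ is orthogonal to every codeword of $C$; applying this to our fixed $c\in C$ yields $(c,U_1-U_2)=0$.

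Finally, using the bilinearity of the scalar product over $\F_p$, I would rewrite $(c,U_1-U_2)=(c,U_1)-(c,U_2)$, whence $(c,U_1)=(c,U_2)$. Since $U_1$ and $U_2$ were arbitrary subspaces of dimension at least $1$, this shows that $(c,U)$ is independent of $U$, i.e.\ a constant depending only on $c$.

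There is essentially no obstacle here: the statement is an immediate corollary of Lemma \ref{1}, and the only points requiring care are to phrase ``constant'' correctly (constant across all admissible $U$ for a fixed $c$) and to use that Lemma \ref{1} permits subspaces of unequal dimension. One could alternatively verify the claim on the hyperplane generators of $C$ and extend by linearity, but that route reintroduces a $\bmod\,p$ intersection computation that the Lemma \ref{1} argument avoids entirely.
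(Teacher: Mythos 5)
Your proof is correct and follows exactly the paper's own argument: invoke Lemma \ref{1} to get $U_1-U_2\in C^\bot$, then use $(c,U_1-U_2)=0$ and bilinearity to conclude $(c,U_1)=(c,U_2)$. No further comment is needed.
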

\begin{proof} Lemma \ref{1} yields that $U_1-U_2\in C^\bot$, for all subspaces $U_1,U_2$ with $\dim(U_i)\geq 1$, hence $(c,U_1-U_2)=0$, so $(c,U_1)=(c,U_2)$.
\end{proof}
\begin{lemma} \label{lem5} A codeword $c$ is in $C\cap C^\bot$ if and only if $(c,U)=0$ for all subspaces $U$ with $\dim(U)\geq 1$.
\end{lemma}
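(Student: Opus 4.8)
The plan is to prove the two implications separately, disposing of the forward one with the tools already in hand and isolating the real work in the converse. Throughout I assume $n\geq 2$, so that hyperplanes are genuine subspaces of dimension $\geq 1$.

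For the forward implication, suppose $c\in C\cap C^\bot$. Since $c\in C$, Lemma \ref{lem4} shows that $(c,U)$ takes one and the same value $\lambda$ on every subspace $U$ with $\dim U\geq 1$. Each hyperplane $H$ is such a subspace and its incidence vector lies in $C$, so the hypothesis $c\in C^\bot$ forces $(c,H)=0$; hence $\lambda=0$ and $(c,U)=0$ for every subspace $U$ of dimension at least $1$.

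For the converse, assume $(c,U)=0$ for all $U$ with $\dim U\geq 1$. That $c\in C^\bot$ is immediate: letting $U$ range over the hyperplanes gives $(c,H)=0$ for all $H$, and since the incidence vectors of the hyperplanes generate $C$, linearity yields $(c,x)=0$ for every $x\in C$. The substantial point is to show $c\in C$. Here I would use $C=(C^\bot)^\bot$, so that it suffices to prove $(c,v)=0$ for every $v\in C^\bot$. If one knows that $C^\bot$ is contained in the $\F_p$-span $L$ of the incidence vectors of all subspaces of dimension at least $1$, then writing such a $v$ as $\sum_U a_U\,U$ gives $(c,v)=\sum_U a_U (c,U)=0$ by hypothesis, and $c\in C$ follows. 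Note that $L$ indeed contains $C$, since each hyperplane is a sum over $\F_p$ of the lines through a fixed one of its points.

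The main obstacle is exactly the containment $C^\bot\subseteq L$, i.e. that every vector orthogonal to all hyperplanes is a combination of subspaces of dimension $\geq 1$ (equivalently, of lines). Lemma \ref{1} is not enough by itself: it only supplies the differences $U_1-U_2\in C^\bot$, and these need not span all of $C^\bot$. I would attack the containment either by a dimension count, comparing $\dim L$ with $\theta_n-\dim(C\cap C^\bot)$ via the known dimension of $C$ and the inclusions $C\cap C^\bot\subseteq L^\bot\subseteq C^\bot$ already established above; or, when $q=p$ is prime, by the structural route, in which the condition $(c,\ell)=0$ on every line forces $c$ to behave additively on the points, so that $c$ is the restriction of an $\F_p$-linear functional, and all such functionals lie in $C$. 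For general $q=p^h$ the additive argument becomes delicate, and it is the verification of $C^\bot\subseteq L$ that I expect to demand the most care.
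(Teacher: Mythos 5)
Your forward implication is exactly the paper's: $c\in C^\bot$ gives $(c,H)=0$ for every hyperplane $H$, and Lemma \ref{lem4} propagates this to all subspaces of dimension at least $1$. The problem is the converse, where you have read ``a codeword $c$'' as ``an arbitrary vector $c$'' and therefore set yourself the task of proving $c\in C$. In this paper a codeword means an element of $C$, and the paper's own proof of the converse begins ``suppose that $c\in C$ and $(c,U)=0$ for all subspaces $U$''; with that hypothesis the converse is a one-liner (take $U$ to be a hyperplane to get $c\in C^\bot$), which is also all that is used later (e.g.\ in Lemma \ref{lem6b}, where $c$ is a codeword of $C$ from the outset).

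Moreover, the strengthened statement you are actually trying to prove is false, so the step you flag as delicate --- the containment $C^\bot\subseteq L$, with $L$ the $\F_p$-span of the incidence vectors of all subspaces of dimension at least $1$ --- cannot be established. Since every subspace of dimension $\geq 1$ is an $\F_p$-sum of the lines through one of its points (the multiplicities $\theta_{k}$ being $1 \bmod p$), $L$ is just the code generated by the lines; for $n=2$ this is $C$ itself, and already in $PG(2,4)$ one has $\dim C=\binom{3}{2}^2+1=10$ while $\dim C^\bot=21-10=11$, so $C^\bot\not\subseteq C=L$. Any $v\in C^\bot\setminus C$ then satisfies $(v,U)=0$ for every subspace $U$ of dimension at least $1$ without lying in $C\cap C^\bot$. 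The fix is simply to keep $c\in C$ as part of the hypothesis, as the statement intends, and delete the second half of your converse.
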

\begin{proof} Let $c$ be a codeword of $C\cap C^\bot$. Since $c\in C^\bot$, $(c,H)=0$ for all hyperplanes $H$, Lemma \ref{lem4} yields that $(c,U)=0$ for all subspaces $U$ with dimension at least $1$.

Now suppose that $c\in C$ and $(c,U)=0$ for all subspaces $U$ with dimension at least $1$. Applying this to a hyperplane yields that $c\in C\cap C^\bot$.
\end{proof}

\begin{theorem} \label{th11}
The minimum weight of $C\cap C^\bot$ is equal to $2q^{n-1}$.
\end{theorem}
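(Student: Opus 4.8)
The plan is to prove the equality by establishing the two inequalities separately. For the upper bound I would simply exhibit a codeword of the required weight: take two distinct hyperplanes $H_1,H_2$. Their difference $H_1-H_2$ is an $\F_p$-combination of rows of $A$, hence lies in $C$, and by Lemma \ref{1} it lies in $C^\bot$, so $H_1-H_2\in C\cap C^\bot$. Since $|H_1\cap H_2|=\theta_{n-2}$ and the two hyperplanes carry opposite signs off their intersection, the weight is $2(\theta_{n-1}-\theta_{n-2})=2q^{n-1}$. Thus the minimum weight of $C\cap C^\bot$ is at most $2q^{n-1}$.

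For the lower bound, let $c\in C\cap C^\bot$ be nonzero with support $S$ and weight $w$. By Lemma \ref{lem5} we have $(c,\ell)=0$ for every line $\ell$, so $S$ carries no tangent lines: every line meets $S$ in $0$ or in at least $2$ points. First one rules out the smallest possible weights: a scalar multiple of a hyperplane cannot occur, since $(H,H)=\theta_{n-1}\equiv 1\pmod p$, so $H\notin C^\bot$; and $w\geq\theta_{n-1}$ because $\theta_{n-1}$ is the minimum weight of $C$ \cite[Proposition 5.7.3]{AK}. The main idea is then an induction on $n$. The restriction $c|_H$ of $c$ to a hyperplane $H$ again lies in $C(H)\cap C(H)^\bot$: the orthogonality to all lines of $H$ is inherited from Lemma \ref{lem5}, while membership in $C(H)$ follows because each row of $A$ restricts to a hyperplane of $H$ (or to all of $H$), and restriction is linear. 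Hence, by the induction hypothesis, $|S\cap H|\in\{0\}\cup[\,2q^{n-2},\infty)$, the base case being the planar statement that the hull of $PG(2,q)$ has minimum weight $2q$. Fixing a point $P\in S$ and counting incidences of support points with the $\theta_{n-1}$ hyperplanes through $P$ gives
\[
\theta_{n-1}\cdot 2q^{n-2}\ \leq\ \sum_{H\ni P}|S\cap H|\ =\ \theta_{n-1}+(w-1)\theta_{n-2},
\]
which forces $w$ to be of the order of $2q^{n-1}$.

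The hard part will be sharpening this estimate to the exact value $2q^{n-1}$: the displayed count only uses the minimal value $2q^{n-2}$ for every rich hyperplane, whereas in the extremal configuration $H_1-H_2$ many hyperplanes meet $S$ in as many as $q^{n-1}$ points, so the simple averaging loses a term of order $q$. Closing this gap will require either a second-moment refinement that accounts for the hyperplanes meeting $S$ heavily, or a more structural argument identifying $S$ with (the symmetric difference of) two hyperplanes. A useful guide here is the reformulation obtained by writing $c=\sum_H a_H H$: using $q^{n-1}\equiv 0$ and $\theta_{n-2}\equiv 1\pmod p$, the condition $c\in C^\bot$ collapses to $\sum_H a_H\equiv 0\pmod p$, which both explains the extremal example $H_1-H_2$ (coefficients $+1,-1$) and suggests that the minimal-weight elements of the hull should be exactly the two-hyperplane differences; making this minimality precise, together with a careful treatment of the planar base case, is where I expect the real work to lie.
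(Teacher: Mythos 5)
Your upper bound is exactly the paper's, and your observation that every line meets $supp(c)$ in $0$ or at least $2$ points is also the paper's starting point. But the lower bound as you set it up has a genuine gap, which you yourself flag: averaging $|S\cap H|\geq 2q^{n-2}$ over the $\theta_{n-1}$ hyperplanes through a fixed point of $S$ only yields $w\gtrsim 2q^{n-1}-q$, because each off-point of $S$ is counted $\theta_{n-2}$ times and the ratio $\theta_{n-1}/\theta_{n-2}$ is slightly more than $q$. Neither of your two suggested repairs (a second-moment count, or classifying the minimal supports as two-hyperplane differences) is carried out, and the second one is strictly harder than the theorem itself; the remark that $c=\sum_H a_H H$ lies in $C^\bot$ iff $\sum_H a_H\equiv 0\pmod p$ is correct but does not by itself control the weight. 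So the proposal does not close.

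The idea you are missing is to slice along a line rather than around a point, which makes the count lossless. If $wt(c)<2q^{n-1}$, then some line $L$ meets $supp(c)$ in \emph{exactly} two points (otherwise every line through a support point would carry at least two further support points, giving $wt(c)\geq 1+2\theta_{n-1}>2q^{n-1}$). Now run your restriction argument on the $\theta_{n-2}$ \emph{planes} through $L$ instead of the hyperplanes through a point: the restriction of $c$ to any plane $\pi$ lies in $C(\pi)\cap C(\pi)^\bot$ (a hyperplane restricts to a line of $\pi$ or, via the sum of a pencil, to the all-one vector), and is nonzero for $\pi\supseteq L$, hence has weight at least $2q$ by the planar case. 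Since these planes partition the points off $L$ and each contains exactly the two points of $S\cap L$ plus at least $2q-2$ others, one gets $wt(c)\geq 2+\theta_{n-2}(2q-2)=2q^{n-1}$ exactly — no averaging loss and no induction on $n$ beyond the planar base case. This single device is what turns your ``order of $2q^{n-1}$'' estimate into the sharp bound.
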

\begin{proof}
It follows from Lemma \ref{lem5} that the support of a codeword $c$ in $C\cap C^\bot$ corresponds to a set of points such that every line contains zero or at least two of them. If $wt(c)<2q^{n-1}$, then there is a line $L$ containing exactly two points of $supp(c)$. Suppose not, then all lines through a point $P\in supp(c)$ would have two extra intersection points with $supp(c)$, which would imply that $wt(c)\geq1+2\theta_{n-1}$, a contradiction.

Since the restriction of a hyperplane $H$ to a plane $\pi$ is a line (if $\pi\nsubseteq H$) or the sum of the lines of a pencil (if $\pi \subseteq H$), it follows that the restriction of the codeword $c$ to a plane $\pi$ is a codeword in the code $C(\pi)$ of points and lines in $\pi$. 

In all planes $\pi$ through $L$, $supp(c)$ has at least two points and $(c,l)=0$ for all lines $l$ in $\pi$, so the restriction of $c$ to $\pi$ lies in $ C(\pi)\cap C(\pi)^\bot$, which has minimum weight $2q$ (see \cite{AK}).

This implies that $supp(c)$ has at least $\theta_{n-2}(2q-2)+2$ points which is equal to $2q^{n-1}$, a contradiction, so the minimum weight of $C\cap C^\bot$ is at least $2q^{n-1}$.

This minimum $2q^{n-1}$ can be obtained when we take the difference of two hyperplanes $H_1$ and $H_2$. This vector has weight $2q^{n-1}$, it is a codeword of $C$ since it is a linear combination of hyperplanes, and it belongs to $C^\bot$ since $(H_1-H_2,H)$=$(H_1,H)-(H_2,H)=0$ for all hyperplanes $H$.
\end{proof}
\begin{remark} Proposition 2 of \cite{Bagchi} yields the same statement for $q$ prime. Moreover, for $q$ prime, every codeword of weight $2q^{n-1}$ in $C\cap C^{\perp}$ is a scalar multiple of the difference of two hyperplanes of $PG(n,q)$.
\end{remark}
\begin{lemma}\label{lem6}
$$C\cap C^\bot=\left\langle H_1-H_2 \vert\vert H_1, H_2 \mbox{ distinct hyperplanes of }PG(n,q) \right \rangle.$$
\end{lemma}
\begin{proof}
Put $A=\left\langle H_1-H_2 \vert\vert H_1, H_2 \mbox{ distinct hyperplanes of }PG(n,q) \right \rangle$. Clearly $A\subseteq C\cap C^\bot$, since $(H,v)=(H,H_i)-(H,H_j)=0$, for every hyperplane $H$ of $PG(n,q)$, and for every $v=H_i-H_j\in A$. Moreover, since $\left \langle A \cup \lbrace H_k \rbrace \right \rangle$ contains each hyperplane, it follows that $\dim(C)-1\leq \dim(A)\leq \dim(C\cap C^\bot)$. The lemma now follows easily, since $C\cap C^\bot$ is not equal to $C$, as a hyperplane is not orthogonal to itself.
\end{proof}

Before we can link codewords of small weight to blocking sets, we need to prove that a small blocking set can be uniquely reduced to a minimal blocking set.

A \emph{blocking set} (with respect to lines) of $PG(n,q)$ is a set $B$ of points such that every line contains at least one point of $B$. 
A blocking set is called \emph{minimal} if no proper subset of it is a blocking set.  A point of a blocking set $B$ is called {\em essential} if it lies on a tangent line to $B$. It is easy to see that a blocking set is minimal if all its points are essential. A blocking set is called {\em trivial} when it contains a hyperplane.

\begin{lemma}{\cite[Lemma 2.11]{nieuw}}\label{lemma1}
Let $B$ be a blocking set in $PG(h+1,q)$ with respect to lines. If $\vert B \vert=2q^h+q^{h-1}+\cdots+q-s$, then there are at least $s+1$ tangent lines through each essential point of $B$.
\end{lemma}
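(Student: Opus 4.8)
The plan is to induct on the dimension, reducing the whole statement to the planar case $PG(2,q)$ by slicing with hyperplanes through a fixed tangent line. Throughout I fix an essential point $P$ of $B$ together with a tangent line $\ell$ to $B$ at $P$ (which exists precisely because $P$ is essential), and I write $f(m)=2q^{m}+q^{m-1}+\cdots+q$ for the critical size in $PG(m+1,q)$, so that the assertion is that the number $t$ of tangent lines to $B$ through $P$ satisfies $t\ge f(h)-|B|+1$. Note the useful identity $f(m)-1=(2q-1)\theta_{m-1}$, which is what will make the induction balance exactly.

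For the base case $h=1$, i.e.\ a blocking set in $PG(2,q)$ with $|B|=2q-s$, the claim is that each essential point lies on at least $2q+1-|B|$ tangents. This planar bound is where the genuine difficulty lies: a direct incidence count through $P$ only delivers the far weaker estimate $t\ge q+2-|B|$, since a single secant through $P$ may carry as many as $q$ further points of $B$. Recovering the factor-two improvement needs the structure theory of small planar blocking sets (lacunary/Rédei polynomial techniques in the spirit of Blokhuis and Sz\H{o}nyi), and I would invoke that known planar result rather than reprove it. This is the main obstacle, and it is the only non-combinatorial input.

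For the inductive step in $PG(h+1,q)$, I consider the $\theta_{h-1}$ hyperplanes $H$ containing $\ell$. For each such $H$, the trace $B\cap H$ is a blocking set of $H\cong PG(h,q)$ (every line of $H$ is a line of $PG(h+1,q)$, and the point of $B$ blocking it already lies in $H$), the point $P$ is essential in $H$, and $\ell$ is a tangent to $B\cap H$ at $P$. The crucial observation is that a line $m\subseteq H$ through $P$ is tangent to $B\cap H$ in $H$ if and only if it is tangent to $B$ in the whole space, because $m\subseteq H$ forces $m\cap B=m\cap(B\cap H)$. By the induction hypothesis, $H$ carries at least $f(h-1)-|B\cap H|+1$ tangents through $P$, hence at least $f(h-1)-|B\cap H|$ such tangents other than $\ell$.

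I then double-count the pairs $(m,H)$ with $m\neq\ell$ a tangent to $B$ through $P$ and $H\supseteq\langle \ell,m\rangle$ a hyperplane. Each tangent $m\neq\ell$ spans a plane with $\ell$ and so lies in exactly $\theta_{h-2}$ such hyperplanes, whence the number of pairs equals $\theta_{h-2}(t-1)$; comparing with the per-hyperplane bounds gives
\[
\theta_{h-2}(t-1)\ \ge\ \sum_{H\supseteq \ell}\bigl(f(h-1)-|B\cap H|\bigr).
\]
Evaluating $\sum_{H\supseteq\ell}|B\cap H|=\theta_{h-1}+\theta_{h-2}(|B|-1)$, since $P$ lies in all $\theta_{h-1}$ hyperplanes through $\ell$ while every other point of $B$ lies in $\theta_{h-2}$ of them, and then applying $\theta_{h-1}(f(h-1)-1)=\theta_{h-2}(f(h)-1)$ (a consequence of $f(m)-1=(2q-1)\theta_{m-1}$), the right-hand side collapses to exactly $\theta_{h-2}\bigl(f(h)-|B|\bigr)$. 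Dividing by $\theta_{h-2}$ yields $t\ge f(h)-|B|+1$, which closes the induction. The only routine point to verify is that this chain of identities is exact, so that no slack accumulates across the dimensions; the substantive content remains the planar base case.
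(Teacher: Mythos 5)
The paper itself offers no proof of this lemma: it is imported verbatim as Lemma 2.11 of \cite{nieuw}, so there is no in-paper argument to compare yours against. Judged on its own terms, your induction is correct. I checked the arithmetic: with $f(m)-1=(2q-1)\theta_{m-1}$ one indeed gets $\sum_{H\supseteq\ell}|B\cap H|=\theta_{h-1}+\theta_{h-2}(|B|-1)$ and hence
\[
\theta_{h-2}(t-1)\;\geq\;\theta_{h-1}\bigl(f(h-1)-1\bigr)-\theta_{h-2}(|B|-1)\;=\;\theta_{h-2}\bigl(f(h)-|B|\bigr),
\]
so the count closes with no slack, and the auxiliary facts you use (that $B\cap H$ blocks the lines of $H$, that $P$ stays essential in $H$ because $\ell\subseteq H$, that a hyperplane through a fixed line corresponds to a hyperplane of a $PG(h-1,q)$ quotient, hence the counts $\theta_{h-1}$ and $\theta_{h-2}$) are all right; the induction hypothesis also remains valid when $f(h-1)-|B\cap H|+1\leq 0$, since the bound is then vacuous. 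The one substantive external input is exactly where you say it is: the planar case is the Blokhuis--Brouwer theorem that an essential point of a blocking set $B$ in $PG(2,q)$ lies on at least $2q+1-|B|$ tangents, which genuinely requires lacunary-polynomial machinery and cannot be replaced by the naive count $t\geq q+2-|B|$. Citing that theorem is legitimate (the present paper likewise cites rather than proves), but you should name it explicitly as Blokhuis--Brouwer rather than gesture at ``the known planar result,'' since the entire lemma rests on it.
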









\begin{corollary} \label{cor1}
Every blocking set $B$ w.r.t. lines in $PG(n,q)$, of size smaller than $q^{n-1}+\theta_{n-1}$, can be uniquely reduced to a  minimal blocking set $B'$.
\end{corollary}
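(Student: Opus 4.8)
\emph{Plan.} The plan is to prove that the set $E$ of \emph{essential} points of $B$ is itself a blocking set; once this is established, both existence and uniqueness of the reduction follow formally. Indeed, a point lying on no tangent line can be deleted from a blocking set without destroying the blocking property, so one may repeatedly delete non-essential points and, $B$ being finite, reach a minimal blocking set $B'\subseteq B$; this gives existence. For uniqueness I would first observe that \emph{every} minimal blocking set $B'\subseteq B$ contains $E$: if $P\in E$ and $\ell$ is a tangent to $B$ at $P$, then $\ell\cap B'\subseteq\ell\cap B=\{P\}$, and since $B'$ blocks $\ell$ we must have $P\in B'$. Hence, if $E$ is a blocking set, then $E\subseteq B'$ together with the minimality of $B'$ forces $B'=E$, so the reduction is unique (and equals $E$).

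It therefore remains to show that $E$ is a blocking set, and this is exactly where the hypothesis $|B|<q^{n-1}+\theta_{n-1}$ enters. Suppose, for a contradiction, that some line $\ell$ contains no essential point, i.e. every point of $\ell\cap B$ is non-essential. A tangent line would render its unique point of $B$ essential, so necessarily $|\ell\cap B|\geq 2$. Put $R=B\setminus\ell$. I claim that $R$ meets every line of $PG(n,q)$ different from $\ell$. Let $t\neq\ell$ be such a line. If $t$ is skew to $\ell$, its blocking point lies in $B$ but off $\ell$, hence in $R$. If $t$ meets $\ell$ in a point $Z$, then either $Z\in B$, in which case $Z$ is non-essential and the secant $t$ carries a second point of $B$, necessarily off $\ell$; or $Z\notin B$, in which case the blocking point of $t$ is distinct from $Z$ and again off $\ell$. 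In all cases $t\cap R\neq\emptyset$.

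Next I would restrict to the $\theta_{n-2}$ planes through $\ell$, which partition $R=B\setminus\ell$. Fix such a plane $\pi$; deleting $\ell$ turns $\pi$ into an affine plane $AG(2,q)$, and since every projective line of $\pi$ other than $\ell$ meets $\ell$ in a single point, the claim of the previous paragraph shows that $R\cap\pi$ meets every affine line of $\pi\setminus\ell$, in all directions. Thus $R\cap\pi$ is an affine blocking set, and by the classical lower bound of Jamison and of Brouwer--Schrijver we have $|R\cap\pi|\geq 2q-1$. Summing over the $\theta_{n-2}$ planes and using $q\theta_{n-2}=\theta_{n-1}-1$ and $\theta_{n-1}-\theta_{n-2}=q^{n-1}$ yields $|R|\geq(2q-1)\theta_{n-2}=q^{n-1}+\theta_{n-1}-2$, whence $|B|=|R|+|\ell\cap B|\geq q^{n-1}+\theta_{n-1}-2+2=q^{n-1}+\theta_{n-1}$, contradicting the hypothesis. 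Therefore no such $\ell$ exists, $E$ is a blocking set, and the corollary follows.

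The hard part is the sharp counting in the last step. The naive estimate — that one non-essential point on $\ell$ forces only about $\theta_{n-1}$ further points of $B$ — is off by a factor of two and fails to contradict the hypothesis. The decisive input is that the off-$\ell$ part $R$ must block \emph{all} affine lines in each plane through $\ell$, not merely the lines through a single point, so that the affine blocking set bound $2q-1$ (rather than the trivial $q$) applies in every plane; this is precisely what makes the threshold $q^{n-1}+\theta_{n-1}$ tight, as the union of two hyperplanes, of size exactly $q^{n-1}+\theta_{n-1}$, already reduces non-uniquely. The point requiring care is thus the claim of the second paragraph: one must use \emph{both} the non-essentiality of the points of $\ell\cap B$ and the blocking property at the points of $\ell\setminus B$ to guarantee that $R$ meets every line of $PG(n,q)$ other than $\ell$.
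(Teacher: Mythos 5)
Your proof is correct, but it follows a genuinely different route from the paper's. The paper writes $|B|=2q^{n-1}+q^{n-2}+\cdots+q-s$ with $s\geq 0$, takes an arbitrary minimal blocking set $B'\subseteq B$ of size $2q^{n-1}+\cdots+q-s'$, and invokes the quoted lemma of G\'acs, Sz\H{o}nyi and Weiner (Lemma \ref{lemma1}): each point of $B'$ lies on at least $s'+1$ tangents to $B'$, and since each of the $s'-s$ points of $B\setminus B'$ can destroy at most one of these, at least $s+1\geq 1$ tangent to $B$ survives at every point of $B'$; hence $B'$ is exactly the set of points of $B$ on a tangent to $B$, which gives uniqueness. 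You instead prove directly that the set $E$ of essential points is itself a blocking set, by supposing a line $\ell$ carries only non-essential points of $B$, observing that $B\setminus\ell$ must then block every line other than $\ell$, fibering $PG(n,q)\setminus\ell$ into the $\theta_{n-2}$ planes through $\ell$, and applying the Jamison/Brouwer--Schrijver bound $2q-1$ for affine blocking sets in each plane; the arithmetic $(2q-1)\theta_{n-2}+2=q^{n-1}+\theta_{n-1}$ is exactly right, and your formal reduction of uniqueness to ``$E$ is a blocking set'' (every minimal $B'\subseteq B$ contains $E$, hence equals $E$) is sound. What the paper's argument buys is brevity and extra information (a lower bound on the number of surviving tangents at each point of $B'$), at the price of importing Lemma \ref{lemma1}; what yours buys is self-containedness modulo a classical affine result, and it makes the sharpness of the threshold transparent, since the union of two hyperplanes has size exactly $q^{n-1}+\theta_{n-1}$ and reduces non-uniquely.
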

\begin{proof}
Suppose that $\vert B \vert=2q^{n-1}+q^{n-2}+\cdots+q-s$, and let $B'$ be a minimal blocking set contained in $B$, with $\vert B'\vert =2q^{n-1}+q^{n-2}+\cdots+q-s'$. A point in $B\backslash B'$ lies on zero tangent lines to $B$. By Lemma \ref{lemma1}, a point $P_1$ of $B'$ lies on at least $s'+1$ tangent lines to $B'$. There are $s'-s$ points in $B\backslash B'$, so $P_1$ lies on at least $s'+1-(s'-s)$ tangent lines to $B$. Since $s\geq 0$, $P_1$ lies on at least one tangent line to $B$. It follows that $B'$ is the set of points of $B$, which lie on at least one tangent line to $B$, and hence, is uniquely determined.
\end{proof}
We are now ready to link codewords of small weight to blocking sets.
\begin{lemma}
\label{lem6b} A codeword $c$ of $C(PG(n,q))$, with weight $wt(c)$ smaller than $2q^{n-1}$, defines a minimal blocking set w.r.t. lines of $PG(n,q)$. Moreover, $c$ is a codeword taking only values from $\lbrace 0,a \rbrace$, for some $a \in \mathbb{F}_p^\star$, and $supp(c)$ intersects every line in $1\pmod{p}$ points.
\end{lemma}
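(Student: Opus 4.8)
\section*{Proof proposal for Lemma \ref{lem6b}}

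The plan is to first pin down the single scalar governing $c$ and to show it is nonzero, then to identify $supp(c)$ as a blocking set, and finally to upgrade this to the full structural statement via the tangent lines and a reduction to the planar situation. First I would apply Lemma \ref{lem4} to fix the constant $t:=(c,U)$, the common value of the scalar product of $c$ with every subspace $U$ of dimension at least $1$; in particular $(c,L)=t$ for every line $L$ and $(c,H)=t$ for every hyperplane $H$. If $t=0$, then Lemma \ref{lem5} places $c$ in $C\cap C^{\bot}$, whose minimum weight is $2q^{n-1}$ by Theorem \ref{th11}; since $wt(c)<2q^{n-1}$ this forces $c=0$. Hence for a nonzero codeword $t\neq 0$, and then every line $L$ satisfies $(c,L)=t\neq 0$, so $L$ carries a nonzero coordinate of $c$. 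Thus every line meets $supp(c)$, and $supp(c)$ is a blocking set with respect to lines.

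Next I would reduce to a minimal blocking set and read off the values on it. Since $wt(c)=|supp(c)|<2q^{n-1}<q^{n-1}+\theta_{n-1}$, Corollary \ref{cor1} applies and $supp(c)$ is uniquely reduced to a minimal blocking set $B'$, namely the set of points of $supp(c)$ lying on at least one tangent line. Writing $|B'|$ in the form of Lemma \ref{lemma1}, every essential point lies on many tangent lines; but a tangent line $L$ meets $supp(c)$ in a single point $P$, so $(c,L)=c_P=t$. Therefore every essential point carries the value $t$, which will be the scalar $a$.

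The remaining, and hardest, task is to show that there are no non-essential points, so that $supp(c)=B'$ is itself minimal and $c$ takes only the values $0$ and $a=t$. The obstacle is that the obvious counting is useless in characteristic $p$: summing $(c,L)=t$ over the $\theta_{n-1}$ lines through a fixed point $P$ gives $(\theta_{n-1}-1)c_P=(\theta_{n-1}-1)t$, and the coefficient $\theta_{n-1}-1=q\theta_{n-2}$ vanishes modulo $p$, so it yields no information on $c_P$ at a non-essential point. To circumvent this I would pass to planes: by the restriction argument used in the proof of Theorem \ref{th11}, for every plane $\pi$ the restriction $c|_{\pi}$ lies in the code $C(\pi)$ of points and lines of $\pi$ and has all line-sums equal to $t$. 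For a plane $\pi$ meeting $supp(c)$ in fewer than $2q$ points, $c|_{\pi}$ is a nonzero codeword of $C(PG(2,q))$ whose support is a planar blocking set, and the known structure of small-weight codewords of $C(PG(2,q))$ (scalar multiples of incidence vectors of minimal blocking sets meeting every line in $1\pmod p$ points; see \cite{AK}, \cite{chouinard}) forces $c|_{\pi}=t\,\chi_{\pi\cap supp(c)}$ with $\pi\cap supp(c)$ minimal in $\pi$. Since a line of $\pi$ is contained in $\pi$, a tangent to $\pi\cap supp(c)$ is a tangent to $supp(c)$, so every point of $supp(c)$ lying in such a plane is essential and has value $t$. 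Covering $supp(c)$ by planes of small intersection then gives $c=t\,\chi_{supp(c)}$ globally, whence $supp(c)=B'$ is minimal.

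Finally the $1\pmod p$ property is immediate once the values are known: for an arbitrary line $L$ we have, in $\mathbb{F}_p$, $t=(c,L)=t\,|L\cap supp(c)|$, and since $t\neq 0$ this gives $|L\cap supp(c)|\equiv 1\pmod p$. I expect the delicate point to be the plane-covering step in the previous paragraph, namely guaranteeing enough planes with intersection smaller than $2q$ and combining the planar conclusions consistently, since this is exactly where the characteristic obstruction forces us to use the blocking-set and tangent-line machinery rather than a direct global computation.
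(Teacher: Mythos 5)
Your first two paragraphs and the final $1\pmod{p}$ deduction coincide with the paper's proof: the constant $t=(c,U)$ is nonzero by Lemma \ref{lem5} and Theorem \ref{th11}, $supp(c)$ is a blocking set, and every essential point carries the value $t$ because it lies on a tangent line. The divergence, and the genuine gap, is in the minimality step. Your plan to restrict $c$ to planes $\pi$ with $\vert \pi\cap supp(c)\vert<2q$ and to quote the planar classification has two problems. First, the covering does not close: averaging over the planes through a fixed non-essential point $P_1$ of $supp(c)$ (or over the planes through a $2$-secant at $P_1$) only yields a plane meeting $supp(c)$ in at most $2q$ points, not strictly fewer, and at weight exactly $2q$ the planar statement you want to cite fails (the difference of two intersecting lines is a weight $2q$ codeword of $C(PG(2,q))$ that is not a multiple of a blocking set); you cannot sharpen the count by choosing a tangent through $P_1$, because $P_1$ is precisely assumed to lie on no tangent. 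Second, even where the restriction does have weight strictly below $2q$, you are importing the $n=2$ case of the very lemma being proved as an external black box, which is circular for $n=2$ and makes the argument rest on the planar literature rather than being self-contained.

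The paper closes the minimality step with a short direct argument using only ingredients you already have. Suppose $P_1\in supp(c)$ is non-essential. Since $wt(c)<2q^{n-1}<2\theta_{n-1}+1$, not every line through $P_1$ can carry two further points of $supp(c)$, and since no line through $P_1$ is a tangent, some line $P_1P_2$ is a $2$-secant. By Corollary \ref{cor1} the unique minimal reduction $B'$ of $supp(c)$ is exactly the set of essential points; the line $P_1P_2$ must be blocked by $B'$ and $P_1\notin B'$, so $P_2$ is essential and $c_{P_2}=t$. Then $t=(c,P_1P_2)=c_{P_1}+c_{P_2}=c_{P_1}+t$ forces $c_{P_1}=0$, contradicting $P_1\in supp(c)$. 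This single $2$-secant, combined with the uniqueness of the reduction, is the substitute for the characteristic-$p$ counting you correctly identified as useless; no passage to planes is needed.
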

\begin{proof}

Take a codeword $c$ with weight $wt(c)<2q^{n-1}$, then according to Lemmas \ref{lem4}, \ref{lem5} and Theorem \ref{th11}, $(c,l)=a\neq 0$ for every line $l$. So $supp(c)$ defines a blocking set $B$ w.r.t. lines of $PG(n,q)$. We now show that this blocking set is minimal.
Suppose that every line contains at least two points of the blocking set. Counting the points of $B$ on all lines through a point not in $B$ yields
$$\vert B \vert \geq 2\theta_{n-1},$$
a contradiction. So there is a point $R\in B$ lying on at least one tangent line $l$ to $B$. This implies that $(c,l)=c_R=a\neq$ 0. Since $(c,m)=a$ for all lines $m$ (Lemma \ref{lem4}), we may conclude that for every necessary point $R$ of the blocking set $B$ defined by $c$, $c_R$ equals $a \neq 0$.

By way of contradiction, suppose that $c$ defines a non-minimal blocking set, and consider a point $P_1$ that is not necessary. If all $\theta_{n-1}$ lines through $P_1$ contain at least two extra points of $B$, then $\vert B \vert \geq 2\theta_{n-1}+1>2q^{n-1}$, a contradiction. So there is a line $P_1P_2$ which has only $P_1$ and $P_2$ in common with $B$. Since $B$ can be uniquely reduced to a minimal blocking set, see Corollary \ref{cor1}, the point $P_2$ is necessary, which implies that $c_{P_2}=a$. But $a=(c,P_1P_2)=c_{P_1}+c_{P_2}=a+c_{P_1}$, which implies that $c_{P_1}=0$, contradicting $P_1 \in B$. This implies that $B$ is minimal.

Since $(c,m)=a$ for all lines $m$, and $c_P=a$ for all points $P\in supp(c)$, it follows that $supp(c)$ intersects every line in  $1\pmod{p}$ points.
\end{proof}

We give another proof for the following theorem proven in \cite[Proposition 5.7.3]{AK}, by using Lemma \ref{lem6b}.
\begin{corollary} The minimum weight codewords of $C$ are the scalar multiples of the incidence vectors of the hyperplanes of $PG(n,q)$.
\end{corollary}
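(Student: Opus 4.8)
The plan is to combine the structural information from Lemma \ref{lem6b} with an elementary counting argument through a point outside the support, and to settle the equality case by showing that a minimal blocking set of the smallest possible size must be a subspace. First I would record the easy direction and pin down the value of the minimum weight: the incidence vector of any hyperplane $H$ is a row of $A$, hence a codeword of $C$, and its weight equals $\theta_{n-1}$, the number of points of $H$. Consequently the minimum weight is at most $\theta_{n-1}$, and in particular strictly smaller than $2q^{n-1}$, since $\theta_{n-1}=q^{n-1}+\theta_{n-2}$ with $\theta_{n-2}<q^{n-1}$ for $q\geq 2$. So any minimum weight codeword $c$ satisfies $wt(c)<2q^{n-1}$, and Lemma \ref{lem6b} applies: $B:=supp(c)$ is a minimal blocking set with respect to lines, $c$ takes a single nonzero value $a\in\F_p^\star$ on $B$, and $B$ meets every line in $1\pmod p$ points.

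Next I would prove the matching lower bound $|B|\geq\theta_{n-1}$ by counting through an external point. Since $|B|=wt(c)\leq\theta_{n-1}<\theta_n$, there exists a point $P\notin B$. The $\theta_{n-1}$ lines through $P$ cover $PG(n,q)\setminus\{P\}$ and pairwise meet only in $P$; as $B$ is a blocking set, each of these lines contains at least one point of $B$, and these points are pairwise distinct because $P\notin B$. Hence $|B|\geq\theta_{n-1}$, and combined with the upper bound from the previous paragraph this gives $wt(c)=|B|=\theta_{n-1}$, which simultaneously confirms that the minimum weight of $C$ equals $\theta_{n-1}$.

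Finally I would treat the equality case, which carries the only real content. With $|B|=\theta_{n-1}$ the counting above is tight, so through every external point $P$ each of the $\theta_{n-1}$ lines meets $B$ in exactly one point. Now take two distinct points $Q_1,Q_2\in B$ and consider the line $\ell=Q_1Q_2$. If $\ell$ contained a point $P\notin B$, then $\ell$ would be a line through the external point $P$ meeting $B$ in at least the two points $Q_1,Q_2$, contradicting that every line through $P$ meets $B$ exactly once. Therefore $\ell\subseteq B$, so $B$ is closed under taking the line through any two of its points, i.e.\ $B$ is a subspace; having exactly $\theta_{n-1}$ points, it is a hyperplane $H$. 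Since $c$ equals $a$ on $B=H$ and $0$ elsewhere, $c=aH$ is a scalar multiple of the incidence vector of a hyperplane. The converse is immediate, as every such vector is a codeword of weight $\theta_{n-1}$, giving the stated characterization.

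The step requiring the most care is the equality argument, and the expected difficulty lies not in depth but in using the hypotheses correctly: the tight counting through an external point is exactly strong enough to force $B$ to be a subspace, so no appeal to deeper blocking-set classifications (such as Bose--Burton) is needed. It is worth double-checking that an external point genuinely exists (which follows from $\theta_{n-1}<\theta_n$) and that the intersection points counted on lines through $P$ are distinct (which follows from $P\notin B$), as these two small observations are what make both the lower bound and the subspace conclusion go through.
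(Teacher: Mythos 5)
Your proof is correct. It shares its skeleton with the paper's: both first invoke Lemma \ref{lem6b} to conclude that a codeword of weight below $2q^{n-1}$ is a constant $a\in\F_p^\star$ supported on a minimal blocking set meeting every line, and both then reduce the problem to classifying the blocking sets of minimum size $\theta_{n-1}$. The difference is in how that classification is handled: the paper simply cites the Bose--Burton theorem, whereas you prove the needed special case (blocking sets with respect to lines) from scratch --- the lower bound via the partition of $B$ by the $\theta_{n-1}$ lines through an external point, and the equality case by observing that tightness forces every such line to be a tangent, whence any secant line is entirely contained in $B$ and $B$ is a subspace, necessarily a hyperplane by its cardinality. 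Your counting is sound (the external point exists since $\theta_{n-1}<\theta_n$, and the intersection points on distinct lines through $P\notin B$ are distinct), so the argument goes through. What your route buys is self-containedness and the explicit determination of the minimum weight as $\theta_{n-1}$; what the paper's route buys is brevity and an appeal to a classical result that covers blocking sets with respect to arbitrary $k$-spaces. Neither approach needs the minimality of $B$ from Lemma \ref{lem6b}, only the blocking property and the constancy of $c$ on its support.
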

\begin{proof} According to Lemma \ref{lem6b}, a codeword of weight smaller than $2q^{n-1}$ is a scalar multiple of the incidence vector of a minimal blocking set with respect to lines. A result of Bose and Burton \cite{Bose} shows that the minimum size of a blocking set with respect to lines in $PG(n,q)$ is equal to $\theta_{n-1}$, and that this minimum is reached if and only if the blocking set is a hyperplane.
\end{proof}

The following lemmas are extensions of Lemmas 6.6.1 and 6.6.2 of Assmus and Key \cite{AK}. 
\begin{lemma} \label{lem7}A vector $v$ of $V(\theta_n,p)$ taking only values from $\lbrace 0,a \rbrace$, $a\in \mathbb{F}_p^\star$, is contained in $(C\cap C^\bot)^\bot$ if and only if $|supp(v) \cap H|\pmod{p}$ is independent of the hyperplane $H$ of $PG(n,q)$.
\end{lemma}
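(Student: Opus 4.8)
The plan is to unwind the definition of $(C\cap C^\bot)^\bot$ using the explicit generating set for $C\cap C^\bot$ furnished by Lemma \ref{lem6}. Since $C\cap C^\bot = \langle H_1 - H_2 \mid H_1, H_2 \text{ distinct hyperplanes of } PG(n,q)\rangle$, a vector $v$ is orthogonal to $C\cap C^\bot$ precisely when it is orthogonal to every generator, i.e.\ when $(v, H_1 - H_2) = 0$ for all distinct hyperplanes $H_1, H_2$. This reduction is what makes the statement tractable, and the remainder of the argument is then a direct computation.

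First I would compute $(v,H)$ for an arbitrary hyperplane $H$. Here the hypothesis that $v$ takes only the values $0$ and $a$ is essential: it lets me write $(v,H) = \sum_{P\in H} v_P = a\,|supp(v)\cap H|$, where the right-hand side is understood modulo $p$. Thus the scalar product of $v$ against a hyperplane is simply $a$ times the number of support points of $v$ lying on that hyperplane.

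Next I would substitute this into the orthogonality condition to obtain $(v, H_1 - H_2) = a\,\bigl(|supp(v)\cap H_1| - |supp(v)\cap H_2|\bigr) \pmod{p}$. Since $a \in \F_p^\star$ is invertible, this expression vanishes if and only if $|supp(v)\cap H_1| \equiv |supp(v)\cap H_2| \pmod{p}$. Ranging over all pairs of distinct hyperplanes, I conclude that $v$ lies in $(C\cap C^\bot)^\bot$ if and only if $|supp(v)\cap H| \pmod{p}$ takes the same value for every hyperplane $H$ of $PG(n,q)$, which is exactly the claimed equivalence.

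The argument is short because Lemma \ref{lem6} already does the heavy lifting of identifying the generators of $C\cap C^\bot$; the only place that requires care is the use of the value-restriction on $v$, since it is precisely what converts an $\F_p$-scalar product into an integer count taken modulo $p$, and without it the congruence reformulation would break down. Consequently I expect no genuine obstacle here beyond keeping the two reductions — from the span to its generators, and from the scalar product to a counting congruence — cleanly separated.
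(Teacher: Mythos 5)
Your proof is correct and follows essentially the same route as the paper's: both use Lemma \ref{lem6} to reduce membership in $(C\cap C^\bot)^\bot$ to orthogonality against the generators $H_1-H_2$, and both exploit the value restriction to rewrite $(v,H)$ as $a\,|supp(v)\cap H| \pmod{p}$ with $a$ invertible. Nothing further is needed.
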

\begin{proof}Let $v$ be a vector in $(C\cap C^\bot)^\bot$, then $(v, H_1-H_2)=0$ since $C\cap C^\bot$ is generated by the differences of the hyperplanes (Lemma \ref{lem6}). We see that $(v,H)= a |supp(v) \cap H|\pmod{p}$ is independent of the choice of the hyperplane $H$ and so is $|supp(v) \cap H|\pmod{p}$. 

Conversely, if $|supp(v) \cap H|$ is constant $\pmod{p}$, then $(v,H) = a \vert supp(v) \cap H\vert\pmod{p}$. This implies that $(v, H_1-H_2)=0$ for all hyperplanes $H_1,H_2$, and hence $v\in(C\cap C^\bot)^\bot$. 
\end{proof}

\begin{lemma} \label{lem8} Let $c$,$v$ be two vectors taking only values from $\lbrace 0,a \rbrace$, $a \in \mathbb{F}_p^\star$, with $c$ $ \in C$, $v \in (C\cap C^\bot)^\bot$. If $\vert supp(c) \cap H\vert\equiv \vert supp(v) \cap H\vert\pmod{p}$ for every hyperplane $H$, then $\vert supp(c) \cap supp(v)\vert \equiv \vert supp(c)\vert\pmod{p}$.
\end{lemma}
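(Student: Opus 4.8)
The plan is to convert both cardinalities appearing in the statement into scalar products over $\F_p$ and thereby reduce the whole claim to a single congruence. Since $c$ takes only the values $0$ and $a$, we have $c_P^2=a^2$ exactly on $supp(c)$, so $(c,c)=a^2\,|supp(c)|\pmod p$; likewise, since both $c$ and $v$ are $\{0,a\}$-valued, the product $c_Pv_P$ equals $a^2$ precisely on $supp(c)\cap supp(v)$, whence $(c,v)=a^2\,|supp(c)\cap supp(v)|\pmod p$. Because $a\neq 0$ is invertible in $\F_p$, the desired congruence $|supp(c)\cap supp(v)|\equiv |supp(c)|\pmod p$ is \emph{equivalent} to the single identity $(c,v)\equiv(c,c)\pmod p$, and this is what I would establish.

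To prove $(c,v)\equiv(c,c)$, I would first record how $c$ and $v$ pair with an arbitrary hyperplane $H$. As $c$ is $\{0,a\}$-valued, $(H,c)=a\,|supp(c)\cap H|\pmod p$, and similarly $(H,v)=a\,|supp(v)\cap H|\pmod p$. The hypothesis $|supp(c)\cap H|\equiv |supp(v)\cap H|\pmod p$, which holds for \emph{every} hyperplane, then yields $(H,c)\equiv(H,v)\pmod p$ for every hyperplane $H$.

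The concluding step is where I would use that $c\in C$: by definition of the code, $c$ is an $\F_p$-linear combination $c=\sum_i\lambda_i H_i$ of hyperplane incidence vectors. Pairing this expression once with $c$ and once with $v$, and invoking the hyperplane identity from the previous step together with bilinearity of the scalar product, gives
\[
(c,c)=\sum_i\lambda_i (H_i,c)\equiv\sum_i\lambda_i (H_i,v)=(c,v)\pmod p,
\]
which, after dividing by $a^2$, is exactly the assertion.

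I do not anticipate a serious obstacle: the argument rests entirely on bilinearity of the scalar product and on the fact that both vectors are $\{0,a\}$-valued, so that each pairing against a hyperplane decodes as $a$ times an intersection cardinality. The only point needing care is the modular bookkeeping — one must check that the congruences $(H_i,c)\equiv(H_i,v)$ recombine correctly under the (possibly repeated) coefficients $\lambda_i$, which is immediate since everything is $\F_p$-linear. It is worth noting that the hypothesis $v\in(C\cap C^\bot)^\bot$ is not actually consumed by this computation; by Lemma \ref{lem7} it merely guarantees that $|supp(v)\cap H|$ is constant modulo $p$ (matching the behaviour forced on $c$ by $c\in C\subseteq(C\cap C^\bot)^\bot$), whereas the proof itself uses only $c\in C$, the $\{0,a\}$-valuedness of $c$ and $v$, and the stated hyperplane-intersection congruence.
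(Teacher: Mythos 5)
Your proof is correct and is essentially the paper's own argument: the paper packages your final computation as the observation that $c-v$ is orthogonal to every hyperplane and hence lies in $C^\bot$, so that $(c,c-v)=0$, which is exactly your expansion of $c$ as $\sum_i\lambda_i H_i$ paired against $c-v$. Your side remark that the hypothesis $v\in(C\cap C^\bot)^\bot$ is not actually consumed is also accurate for the paper's proof.
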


\begin{proof} According to Lemma \ref{lem6}, $(c,H_1-H_2)=0$ for all hyperplanes $H_1,H_2$, since $c$ $\in$ $C$.Hence, $\vert supp(c) \cap H \vert$ $\pmod{p}$ is independent of the hyperplane $H$. Since $(c-v,H)=(c,H)-(v,H)\equiv a\vert supp(c) \cap H\vert-a\vert supp(v) \cap H\vert \equiv 0\pmod{p}$, for every hyperplane $H$, it follows that $c-v \in C^{\bot}$, and hence $(c-v,c)\equiv a^2\vert supp(c)\vert-a^2\vert supp(c) \cap supp(v)\vert\equiv 0\pmod{p}$. This yields that $\vert supp(c)\vert \equiv \vert supp(c)\cap supp(v) \vert\pmod{p}$. \end{proof}

As mentioned in the introduction, we will eliminate all so-called non-trivial {\it linear} blocking sets as the support of a codeword of $C$ of small weight. In order to define a linear blocking set, we introduce the notion of a Desarguesian spread.

By what is sometimes called "field reduction", the points of $PG(n,q)$, $q=p^h$, $p$ prime, correspond to $(h-1)$-dimensional subspaces of $PG((n+1)h-1,p)$, since a point of $PG(n,q)$ is a $1$-dimensional vector space over ${\mathbb F}_q$, and hence an $h$-dimensional vector space over ${\mathbb F}_p$. In this way, we obtain a partition ${\mathcal D}$ of the point set of $PG((n+1)h-1,p)$ by $(h-1)$-dimensional subspaces. In general, a partition of the point set of a projective space by subspaces of a given dimension $k$ is called a {\it spread}, or if we want to specify the dimension, a {\it $k$-spread}. The spread we have obtained here is called a {\it Desarguesian spread}. Note that the Desarguesian spread satisfies the property that each subspace spanned by two spread elements is again partitioned by spread elements. In fact, it can be shown, see \cite{L1}, that if the dimension of the ambient space is larger than twice the dimension of a spread element plus one (i.e. $n\geq 2$), then this property characterises a Desarguesian spread.

\begin{definition} Let $U$ be a subset of $PG((n+1)h-1,p)$ and let $\mathcal{D}$ be a Desarguesian $(h-1)$-spread of $PG((n+1)h-1,p)$, then $\mathcal{B}(U)=\lbrace R \in \mathcal{D}\vert \vert U\cap R \neq \emptyset \rbrace$.\end{definition}

Analogously to the correspondence between the points of $PG(n,q)$ and the elements of a Desarguesian spread $\mathcal D$ in $PG((n+1)h-1,p)$, we obtain the correspondence between the lines of $PG(n,q)$ and the $(2h-1)$-dimensional subspaces of $PG((n+1)h-1,p)$ spanned by two elements of $\mathcal D$. With this in mind, it is clear that any $(nh-h)$-dimensional subspace $U$ of $PG(nh+h-1,p)$ defines a blocking set ${\mathcal B}(U)$ w.r.t. lines in $PG(n,q)$. A blocking set constructed in this way is called a {\it linear} blocking set.  Linear blocking sets were first introduced by Lunardon \cite{L1}, although there a different approach was used.
For more on the approach explained here, we refer to \cite{lavrauw2001}. 

\begin{remark} When working with this representation, we assume that $h>1$. We deal with the case $h=1$ in Corollary \ref{gevolg}.
\end{remark}

\begin{lemma}\label{lem9}
If  $U$ is a subspace of $PG((n+1)h-1,q)$, then $\vert \mathcal{B}(U) \vert =1\pmod{q}$.
\end{lemma}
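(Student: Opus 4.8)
The plan is to prove the congruence by counting the $\mathbb{F}_q$-rational points of $U$ and exploiting the fact that the Desarguesian $(h-1)$-spread $\mathcal{D}$ partitions the \emph{entire} point set of $PG((n+1)h-1,q)$. Since every point of the ambient space lies in exactly one spread element, the points of the subspace $U$ are distributed, without overlap, among precisely those spread elements that meet $U$, i.e.\ the elements of $\mathcal{B}(U)$. The whole argument is a single point count, but carried out over the base field $\mathbb{F}_q$ of the ambient space rather than over the prime field.

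First I would record the two elementary facts about subspace sizes over $\mathbb{F}_q$. For a projective subspace $W$ of dimension $d$, the number of its points is $\theta_d=(q^{d+1}-1)/(q-1)=1+q+\cdots+q^d$, so $|W|\equiv 1\pmod q$ for every $d\geq 0$. In particular $|U|\equiv 1\pmod q$, and for each spread element $R\in\mathcal{B}(U)$ the intersection $U\cap R$ is a nonempty subspace (by the very definition of $\mathcal{B}(U)$), hence $|U\cap R|\equiv 1\pmod q$ as well.

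Next I would assemble the count. Because $\mathcal{D}$ partitions the point set, each point of $U$ lies in the unique spread element containing it, and that element necessarily meets $U$; thus $U$ is the disjoint union of the sets $U\cap R$ over $R\in\mathcal{B}(U)$, giving
$$|U|=\sum_{R\in\mathcal{B}(U)}|U\cap R|,$$
where the spread elements with $U\cap R=\emptyset$ simply do not appear. Reducing this exact identity modulo $q$ and inserting $|U|\equiv 1$ and $|U\cap R|\equiv 1$ for each term yields $1\equiv|\mathcal{B}(U)|\pmod q$, which is the assertion.

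The step carrying all the weight is the observation that every summand $|U\cap R|$ is $\equiv 1\pmod q$: this is exactly what forces the modulus to be $q$ rather than merely the prime $p$. It holds precisely because we count points over the base field $\mathbb{F}_q$ of $PG((n+1)h-1,q)$, in which both $U$ and each $U\cap R$ are genuine $\mathbb{F}_q$-subspaces, so that every $\theta_d$ collapses to $1$ modulo $q$. The one thing to handle carefully is the no-double-counting argument: it is the defining property of a spread (each point in exactly one element) that validates the \emph{exact} equality $|U|=\sum_R|U\cap R|$, and this is what converts the two congruences into the desired conclusion.
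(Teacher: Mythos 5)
Your proof is correct and follows essentially the same route as the paper's: both count the points of $U$ as a disjoint union over the spread elements meeting it and use that every nonempty projective subspace has $1 \pmod{q}$ points. The paper merely organizes the same count by intersection dimension (writing $\sum_i X_i\theta_i = \theta_r$ and manipulating the resulting identity), whereas you reduce each summand modulo $q$ directly; the difference is cosmetic.
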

\begin{proof}
Suppose that $U$ is a subspace of $PG((n+1)h-1,q)$ of dimension $r$ and let $X_i$ be the number of spread elements intersecting $U$ in a subspace of dimension $i$. Each point of $U$ lies in a unique spread element, so

$$\sum_{i=0}^r X_i\theta_i=\theta_r \Leftrightarrow$$
$$\sum_{i=0}^r X_iq^{i+1}-\sum_{i=0}^r X_i=q^{r+1}-1 \Leftrightarrow$$
$$q(\sum_{i=0}^rX_iq^i-q^r)=\sum_{i=0}^rX_i-1.$$

The left hand side is divisible by $q$, so $\sum_{i=0}^r X_i=\vert \mathcal{B}(U) \vert =1\pmod{q}$.
\end{proof}

We put $N=h(n-1)$ throughout the following proofs. We call  the linear blocking set $B$ of $PG(n,q)$ defined by $\mathcal{B}(U_N)$, where $U_N$ is an $N$-dimensional subspace of $PG(h(n+1)-1,p)$, a {\em small} linear blocking set. Such a small linear blocking set is always minimal. Our goal is to exclude the incidence vectors of small linear blocking sets as codewords of $C(PG(n,q))$.

\begin{lemma}\label{N1} Let $U_N$ be an $N$-dimensional subspace of $PG(h(n+1)-1,p)$. Then the number of spread elements of $\mathcal{B}(U_N)$ intersecting $U_N$ in exactly one point is at least $p^{hn-h}-p^{hn-h-2}-p^{hn-h-3}-\cdots-p^{hn-2h+1}-p^{hn-2h-2}-\cdots-p^{hn-3h+1}-p^{hn-3h-2}-\cdots-p^{h+1}-p^{h-2}-\cdots-p$ .\end{lemma}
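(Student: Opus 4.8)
The plan is to start from the exact point-count identity behind Lemma \ref{lem9}. Write $X_i$ for the number of spread elements meeting $U_N$ in a subspace of projective dimension $i$ (so $0\le i\le h-1$, the higher $X_i$ vanishing because a spread element is only $(h-1)$-dimensional). Since $\mathcal D$ partitions the point set of $PG(h(n+1)-1,p)$, it restricts to a partition of $U_N$ into the pairwise disjoint subspaces $U_N\cap R$, whence, exactly as in the proof of Lemma \ref{lem9},
$$\sum_{i=0}^{h-1}X_i\theta_i=\theta_N .$$
As $\theta_0=1$, this gives $X_0=\theta_N-\sum_{i=1}^{h-1}X_i\theta_i$, so the lemma is equivalent to an upper bound on $\sum_{i\ge 1}X_i\theta_i$, i.e. on the number of points of $U_N$ lying in a \emph{fat} spread element (one met by $U_N$ in dimension at least $1$). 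The shape of the claimed estimate, a leading term $p^{N}$ followed by blocks of length $h$ in which exactly the two exponents $\equiv 0$ and $\equiv -1 \pmod h$ are omitted, strongly suggests an induction on $n$ in which each step lowers $N=h(n-1)$ by $h$ and contributes one such block; indeed a direct count shows that the exponents $\equiv 0,-1\pmod h$ in $[1,N]$ are exactly $2(n-1)$ in number, one pair per layer $k=1,\dots,n-1$.

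For the inductive step I would intersect $U_N$ with the $\F_q$-linear hyperplanes of the ambient space, that is, with the subspaces $\Sigma$ that are unions of spread elements and so correspond to the hyperplanes of $PG(n,q)$; as $\F_p$-subspaces these have projective dimension $hn-1$, i.e. codimension $h$ in $PG(h(n+1)-1,p)$, and each is a copy of $PG(hn-1,p)$ carrying an induced Desarguesian $(h-1)$-spread. For a generic such $\Sigma$ one has $\dim(U_N\cap\Sigma)=N-h=h(n-2)$, so $U_N\cap\Sigma$ defines a small linear blocking set in $\Sigma\cong PG(n-1,q)$ to which the induction hypothesis applies; moreover a spread element $R\subseteq\Sigma$ satisfies $R\cap(U_N\cap\Sigma)=R\cap U_N$, so it is tangent to $U_N\cap\Sigma$ exactly when it is tangent to $U_N$. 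Double counting the pairs $(R,\Sigma)$ with $R$ tangent to $U_N$ and $R\subseteq\Sigma$ then relates $X_0$ for $U_N$ to $\sum_\Sigma X_0(U_N\cap\Sigma)$, since a fixed tangent element corresponds to a point of $PG(n,q)$ and hence lies in exactly $\theta_{n-1}$ of these hyperplanes. The base case $n=2$ must be treated separately: there $U_h\cong PG(h,p)$ is partitioned into the sections $U_h\cap R$, and one bounds the number of singleton sections directly, using their pairwise disjointness together with Lemma \ref{lem9} applied inside the $\F_q$-lines spanned by pairs of fat sections.

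The main obstacle is the bookkeeping for the \emph{non-generic} hyperplanes. The double count is clean only for those $\Sigma$ with $\dim(U_N\cap\Sigma)=N-h$; the hyperplanes with $\dim(U_N\cap\Sigma)>N-h$ contain an exceptionally large part of $U_N$ and contribute spurious terms that must be isolated and subtracted off. Controlling how many such $\Sigma$ occur, and by how much each $U_N\cap\Sigma$ exceeds the generic dimension, is precisely what forces the two omitted exponents ($\equiv 0$ and $\equiv -1 \pmod h$, namely the $\F_q$-subspace dimension and the one just above it) in each length-$h$ block, while the remaining $h-2$ exponents per block appear as genuine subtracted terms. Making this per-layer correction precise, uniformly over all layers so that the losses telescope into the stated sum, is where the bulk of the technical work will lie; once the per-layer estimate is in place, summing the resulting geometric contributions over $k=1,\dots,n-1$ yields the displayed bound.
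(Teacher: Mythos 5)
Your proposal is a plan rather than a proof, and the part you defer --- the ``bookkeeping for the non-generic hyperplanes'' and the ``per-layer correction'' --- is exactly where it would break down. For a hyperplane $\Sigma$ with $\dim(U_N\cap\Sigma)>N-h$ the induction hypothesis gives no lower bound on the number of tangent spread elements inside $\Sigma$, and you give no argument controlling how many such $\Sigma$ occur or what they cost in the double count, so the inductive step does not close; the base case $n=2$ is likewise only gestured at. You have also misdiagnosed where the shape of the bound comes from: the omitted exponents are not produced by geometric layer-by-layer corrections.

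The paper's proof is far more elementary and uses neither induction, nor hyperplane sections, nor the identity $\sum_i X_i\theta_i=\theta_N$. It combines two one-line observations: (i) $\mathcal{B}(U_N)$ is a blocking set with respect to the lines of $PG(n,q)$, so by Bose--Burton $\vert\mathcal{B}(U_N)\vert\geq(p^{hn}-1)/(p^h-1)$; and (ii) if $x$ spread elements meet $U_N$ in exactly one point, then every other element of $\mathcal{B}(U_N)$ meets $U_N$ in at least a line of $PG(h(n+1)-1,p)$, hence in at least $p+1$ points, giving $\vert\mathcal{B}(U_N)\vert\leq x+(\vert U_N\vert-x)/(p+1)$. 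Solving the resulting inequality for $x$, with $\vert U_N\vert=(p^{N+1}-1)/(p-1)$, yields the displayed sum. The missing exponents ($\equiv 0$ and $\equiv -1\pmod h$, plus the exponent $0$) are an artefact of expanding $\frac{p+1}{p}\cdot\frac{p^{hn}-1}{p^h-1}-\frac{p^{N+1}-1}{p(p-1)}$ in powers of $p$: the first term contributes the pairs $p^{hk}+p^{hk-1}$, which cancel precisely those exponents in the geometric series coming from the second term. A useful sanity check is $h=2$, where every subtracted block in the statement is empty and the computation collapses to $x\geq p^N$.
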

\begin{proof} The set $\mathcal{B}(U_N)$ defines a blocking set $B$ in $PG(n,q)$, $q=p^h$, $p$ prime, $h\geq 1$, w.r.t. the lines. So $\vert \mathcal{B}(U_N) \vert =\vert B \vert\geq$ $(q^n-1)/(q-1)$ $=(p^{hn}-1)/(p^h-1)$ by Bose and Burton \cite{Bose}. Suppose that there are exactly $x$ spread elements of $\mathcal{B}(U_N)$ intersecting $U_N$ in one point, then
$$\frac{p^{hn}-1}{p^h-1}\leq \vert B \vert \leq \frac{\vert U_N\vert-x}{p+1}+x.$$
Using that $\vert U_N \vert=(p^{N+1}-1)/(p-1)$ yields that $x\geq p^{hn-h}-p^{hn-h-2}-p^{hn-h-3}-\cdots-p^{hn-2h+1}-p^{hn-2h-2}-\cdots-p^{hn-3h+1}-p^{hn-3h-2}-\cdots-p^{h+1}-p^{h-2}-\cdots-p$.
\end{proof}

\begin{remark} \label{rem1} It follows from Lemma \ref{N1} that the number of spread elements of $\mathcal{B}(U_N)$ intersecting $U_N$ in exactly one point is at least $p^{N}-p^{N-1}+1$. We will use this weaker bound.
\end{remark}

\begin{lemma} \label{N2} If there are $p^{N}-p^{N-1}+1$ points $R_i$, $i=1,\ldots,p^N-p^{N-1}+1$, of a minimal blocking set $B$ in $PG(n,q)$, $q=p^h>2$, for which it holds that every line through $R_i$ is either a tangent line to $B$ or is entirely contained in $B$, then $B$ is a hyperplane of $PG(n,q)$.
\end{lemma}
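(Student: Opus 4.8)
The plan is to show that each point $R_i$ is a \emph{vertex} of $B$ (i.e.\ $B$ is a cone with vertex $R_i$), then prove that the set of all such vertices is a subspace contained in $B$, and finally count: with so many vertices this subspace is forced to be a hyperplane, which by minimality must equal $B$.

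First I would record the cone structure at a single point. Fix $R_i$. By hypothesis every line through $R_i$ is either a tangent to $B$ (meeting $B$ only in $R_i$) or is contained in $B$. Consequently, if $X\in B\setminus\{R_i\}$, then the line $R_iX$ contains the two points $R_i,X\in B$, so it is not tangent and must therefore lie entirely in $B$. Thus $R_i$ is a vertex of $B$: for every $X\in B\setminus\{R_i\}$ the whole line $R_iX$ lies in $B$, and $B$ is precisely the union of the lines through $R_i$ contained in $B$. (As a by-product, writing $s$ for the number of such lines one gets $|B|=sq+1$ independently of $i$, but I will not need this.)

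The heart of the argument is to prove that the set $\Pi$ of all vertices of $B$ is a subspace of $PG(n,q)$ contained in $B$. Let $V_1,V_2\in\Pi$ be distinct. Since $V_2\in B$ and $V_1$ is a vertex, the line $V_1V_2$ lies in $B$, so it suffices to show that every point $W$ on $V_1V_2$ is again a vertex. Let $X\in B$; if $X$ lies on $V_1V_2$ then $WX\subseteq V_1V_2\subseteq B$ trivially, so assume $X\notin V_1V_2$ and work in the plane $\langle V_1,V_2,X\rangle$. For a point $Y$ on the line $WX$, the line $V_1Y$ meets the line $V_2X$ in a point $Z\neq V_1$ (here $Z\neq V_1$ because $V_1\notin V_2X$, as $V_1,V_2,X$ are not collinear); since $V_2$ is a vertex, $Z\in V_2X\subseteq B$, and since $V_1$ is a vertex, $Y\in V_1Z\subseteq B$. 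Hence $WX\subseteq B$, so $W$ is a vertex. This shows $\Pi$ is closed under joining lines, hence is a subspace, and $\Pi\subseteq B$ by construction. \emph{This is the step I expect to be the main obstacle}: verifying that the pointwise cone structure at each $R_i$ globalizes to a genuine subspace, via the planar incidence (triangle) argument above; everything else is bookkeeping.

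Finally I would count. All $p^{N}-p^{N-1}+1$ points $R_i$ are vertices, so $\Pi$ is a subspace with $|\Pi|\geq p^{N}-p^{N-1}+1=q^{n-1}(1-1/p)+1$. Comparing with $\theta_d=(q^{d+1}-1)/(q-1)$, the inequality $\theta_{n-2}<p^{N}-p^{N-1}+1$ reduces to $p\leq (p-1)(q-1)$, which holds precisely because $q=p^h>2$; this is where the hypothesis $q>2$ enters, and it rules out $\dim\Pi\leq n-2$. On the other hand $\Pi\subseteq B\subsetneq PG(n,q)$ (a minimal blocking set is not the whole space, since a hyperplane is a proper blocking subset), so $\dim\Pi\leq n-1$. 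Therefore $\dim\Pi=n-1$, so $\Pi$ is a hyperplane and hence itself a blocking set contained in the minimal blocking set $B$; minimality gives $B=\Pi$, a hyperplane, as required.
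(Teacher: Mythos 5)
Your proof is correct, and it follows a genuinely different route from the paper's. The paper pivots on a single point $R_1$: it first notes that any plane through a line $R_iR_j$ is either contained in $B$ or meets $B$ exactly in that line, counts that the points $R_2,\ldots$ give at least $(p^{N}-p^{N-1})/p^h$ distinct lines through $R_1$, and then runs an induction on dimension, enlarging a subspace $\pi_m\subseteq B$ through $R_1$ to some $\pi_{m+1}\subseteq B$ by picking a line $R_1R_j$ outside $\pi_m$ (this surplus of lines is where $q>2$ enters there), until a hyperplane inside $B$ is produced. You instead make the structural object explicit: the set $\Pi$ of cone vertices. Your triangle/perspectivity argument showing that $\Pi$ is closed under joining lines is essentially the same planar computation that underlies the paper's first observation, but the finish is different: rather than growing a flag inside $B$, you use all $p^{N}-p^{N-1}+1$ points at once to force $|\Pi|>\theta_{n-2}$ (your reduction to $p\leq(p-1)(q-1)$, which is exactly where $q>2$ is needed, is correct), so $\Pi$ is a hyperplane and minimality concludes. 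Both arguments are sound and of comparable length; yours isolates the reusable fact that the vertex set of a cone is a subspace and makes the role of the hypothesis purely a counting matter, while the paper's induction is what directly yields its subsequent remark that few well-placed points $R_i$ already suffice, with the Baer cone as the extremal configuration.
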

\begin{proof}
It is easy to see that a plane through a line $R_iR_j$, $i\neq j$, is either completely contained in $B$, or intersects $B$ only in the line $R_iR_j$.
There are at least $(p^{N}-p^{N-1})/p^h$ different lines $R_1R_i$, $i\neq 1$.

We prove that if $B \supset \pi_m$, $B \neq \pi_m$, for some $m$-dimensional space $\pi_m$ through $R_1$, then $B \supseteq \pi_{m+1}$ for some $(m+1)$-dimensional space through $\pi_m$ for all $m< n-1$. 

If $B\supset \pi_m$, then there are still $(p^{N}-p^{N-1})/p^h-(p^{hm}-1)/(p^h-1)$ lines $R_1R_j$ through $R_1$, but not in $\pi_m$,  such that every plane through it intersects $B$ in this line or lies completely in $B$. We can choose such a line $R_1R_j$ if $m<n-1$ and $p^h>2$. Then the space $\langle R_1R_j,\pi_m\rangle$ is clearly contained in $B$.
By induction, we find a hyperplane $\pi$ contained in $B$. Since $B$ is minimal, $B=\pi$.
\end{proof}

\begin{remark} It follows from the proof of Lemma \ref{N2} that it is sufficient to find $n-1$ linearly independent points $R_i$ such that every line through $R_i$ is either a tangent line to $B$ or is entirely contained in $B$, to prove that $B$ is a hyperplane. Moreover, this bound is tight. If there are only $n-2$ linearly independent points for which this condition holds, we have the example of a Baer cone, i.e. let $B$ be the set of all lines connecting a point of a Baer subplane $\pi=PG(2,\sqrt{q})$ to the points of an $(n-3)$-dimensional subspace of $PG(n,q)$, skew to $\pi$.
\end{remark}

%

\begin{lemma}\label{lem10}
Let $U_{N-1}$ be a fixed $(N-1)$-dimensional space in $PG(h(n+1)-1,p)$ and let $U_N$ be an arbitrary $N$-dimensional space containing $U_{N-1}$, $N>2$. Then $\mathcal{B}(U_N)$ is uniquely determined by $U_{N-1}$ and two elements $R_1$, $R_2$ $\in$ $\mathcal{B}(U_N) \backslash \mathcal{B}(U_{N-1})$.
\end{lemma}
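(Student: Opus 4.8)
The plan is to show that the data $U_{N-1}, R_1, R_2$ already pin down the space $U_N$ itself; once $U_N$ is recovered, $\mathcal{B}(U_N)$ is determined by definition. Write $U_N=\langle U_{N-1},P\rangle$ for some point $P\notin U_{N-1}$.

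First I would record the local behaviour of the two given elements. Since $R_i\in\mathcal{B}(U_N)\setminus\mathcal{B}(U_{N-1})$, the subspace $R_i$ meets $U_N$ but is disjoint from the hyperplane $U_{N-1}$ of $U_N$; hence $R_i\cap U_N$ is a single point $A_i$ with $A_i\notin U_{N-1}$, and therefore $U_N=\langle U_{N-1},A_i\rangle$. Consequently $U_N\subseteq W_i:=\langle U_{N-1},R_i\rangle$, and since $R_i\cap U_{N-1}=\emptyset$ we have $\dim W_i=(N-1)+(h-1)+1=N+h-1$. In particular $U_N\subseteq W_1\cap W_2$, so everything reduces to establishing that $W_1\cap W_2$ is forced to have dimension exactly $N$: then $U_N=W_1\cap W_2$ is determined, and we are done.

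The dimension of $W_1\cap W_2$ is governed by the position of $U_{N-1}$ relative to $\langle R_1,R_2\rangle$. Two distinct spread elements span a $(2h-1)$-space, so $\dim\langle R_1,R_2\rangle=2h-1$, and a Grassmann computation gives $\dim(W_1\cap W_2)=N+\dim\!\big(U_{N-1}\cap\langle R_1,R_2\rangle\big)$. Thus I must control how $U_{N-1}$ meets the $(2h-1)$-space $\langle R_1,R_2\rangle$, and the natural tool is the modular counting of Lemma \ref{lem9}. Since $\langle R_1,R_2\rangle$ is spanned by spread elements, it is partitioned by the $q+1$ spread elements it contains, so any subspace $S\subseteq\langle R_1,R_2\rangle$ satisfies $|\mathcal{B}(S)|\equiv 1\pmod q$, where here $\mathcal{B}(S)$ counts those $q+1$ elements meeting $S$. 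Applying this to $S=U_{N-1}\cap\langle R_1,R_2\rangle$, and using that $R_1,R_2\notin\mathcal{B}(U_{N-1})$ forces $R_1,R_2\notin\mathcal{B}(S)$, I would pin down $|\mathcal{B}(S)|$ and, from the position of $S$ with respect to the two fixed elements, read off $\dim S$ and hence $\dim(W_1\cap W_2)$.

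The hard part is exactly this dimension bound. Translating the congruence $|\mathcal{B}(S)|\equiv 1\pmod q$ together with the prescribed positions of $R_1,R_2$ into the correct value of $\dim\big(U_{N-1}\cap\langle R_1,R_2\rangle\big)$ is delicate, and I expect the real difficulty to be controlling any residual freedom for $U_N$ inside $W_1\cap W_2$ --- equivalently, the image of $U_N/U_{N-1}$ in the quotient $W_i/U_{N-1}$, whose projective dimension is $h-1$. This is where the hypotheses $N>2$ and $q=p^h>2$ should enter, paralleling their role in Lemma \ref{N2}. I would therefore carry out the book-keeping in the quotient space $W_i/U_{N-1}$, tracking which spread elements of $\langle R_1,R_2\rangle$ already lie in $\mathcal{B}(U_{N-1})$ versus those first met by $U_N$, and use the images of $R_1$ and $R_2$ there to kill the remaining degree of freedom.
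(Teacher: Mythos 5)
Your strategy is to recover the subspace $U_N$ itself from the data $(U_{N-1},R_1,R_2)$, via $U_N=W_1\cap W_2$ with $W_i=\langle U_{N-1},R_i\rangle$, and you correctly isolate the needed fact as $\dim\big(U_{N-1}\cap\langle R_1,R_2\rangle\big)=0$. That fact is false in general, and in fact $U_N$ is \emph{not} determined by the data --- only $\mathcal{B}(U_N)$ is, which is exactly what the lemma asserts and no more. Concretely, take $n=3$, $h=2$ (so $N=4$, a Desarguesian line spread of $PG(7,p)$), let $\Sigma=\langle R_1,R_2\rangle$ be the solid spanned by two spread lines, choose a line $\ell\subset\Sigma$ skew to both $R_1$ and $R_2$, and set $U_3=\langle \ell,m\rangle$ with $m$ a line skew to $\Sigma$. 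Then $U_3\cap\Sigma=\ell$, so $R_1,R_2\notin\mathcal{B}(U_3)$, while for \emph{every} point $A\in R_1$ the space $\langle U_3,A\rangle$ meets $\Sigma$ in the plane $\langle\ell,A\rangle$ and hence meets $R_2$ as well. So there are $p+1$ distinct admissible spaces $U_4$ through $U_3$, and $\dim(W_1\cap W_2)=N+1$. The ``residual freedom inside $W_1\cap W_2$'' you flag as the hard part is genuinely there, and showing that all these candidate $N$-spaces have the same $\mathcal{B}$ is the actual content of the lemma, which your reduction presupposes rather than proves. (Also, the congruence you want to import from Lemma \ref{lem9} only holds modulo $p$, not modulo $q$, and in any case it constrains the \emph{number} of spread elements of $\Sigma$ met by a subspace $S$, not $\dim S$.)

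The paper's proof never tries to recover $U_N$: it reconstructs the set $\mathcal{B}(U_N)$ element by element. For any third element $R_3\in\mathcal{B}(U_N)\setminus\mathcal{B}(U_{N-1})$, with $R_i\cap U_N=P_i$, the lines $P_1P_3$ and $P_2P_3$ meet the hyperplane $U_{N-1}$ of $U_N$ in points lying on spread elements $R_4,R_5\in\mathcal{B}(U_{N-1})$ that are part of the given data; the Desarguesian property (the span of two spread elements is partitioned by spread elements, so two such spans meet in a spread element) then forces $R_3=\langle R_1,R_4\rangle\cap\langle R_2,R_5\rangle$ when $P_3\notin P_1P_2$, with a short bootstrapping step handling the collinear case. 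To salvage your approach you would have to show directly that all $N$-spaces through $U_{N-1}$ meeting both $R_1$ and $R_2$ have equal $\mathcal{B}$, which is essentially the lemma again.
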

\begin{proof} We may assume that $\mathcal{B}(U_{N-1}) \neq \mathcal{B}(U_N)$, since the theorem is obvious if $\mathcal{B}(U_{N-1}) = \mathcal{B}(U_N)$. 

Suppose that $R_1,R_2 \in \mathcal{B}(U_N)\backslash \mathcal{B}(U_{N-1}), R_1 \neq R_2$. If $R_3 \in \mathcal{B}(U_N) \backslash \mathcal{B}(U_{N-1})$, $R_2 \neq R_3 \neq R_1$, then we claim that $R_3$ can be constructed only using elements of $\mathcal{B}(U_{N-1}) \cup \lbrace R_1,R_2\rbrace$.
Clearly, $R_i$ intersects $U_N$ in a point $P_i$ since $R_1, R_2$ and $R_3$ are elements of $\mathcal{B}(U_N) \backslash \mathcal{B}(U_{N-1})$. So $\langle P_1, P_3 \rangle$ intersects $U_{N-1}$ in a point $P_4$ which lies on a unique spread element $R_4$. Similarly, the spread element through $\langle P_2,P_3 \rangle \cap U_{N-1}$ is called $R_5$.

Case 1: $P_3 \notin P_1P_2$. The spaces $\langle R_1,R_4 \rangle$ and $\langle R_2,R_5\rangle$ are spanned by two elements of a Desarguesian spread, so they intersect in a spread element. The intersection of $\langle R_1,R_4 \rangle$ with $\langle R_2,R_5\rangle$ certainly contains $R_3$. We can conclude that $R_3=\langle R_1,R_4\rangle \cap \langle R_2,R_5 \rangle$.

Case 2: $P_3 \in P_1P_2$.
Take a spread element $R_6 \in \mathcal{B}(U_N)$ already constructed in Case 1. We can switch $R_6$ with $R_2$. Then $R_3 \notin \langle R_1,R_6 \rangle$. So we can copy the proof of Case 1 to determine $R_3$ from $R_1, R_6$ and $U_{N-1}$. But $R_6$ was determined by $R_1, R_2$ and $U_{N-1}$, hence so is $R_3$.
\end{proof}

\begin{theorem}\label{the5}
For every small linear blocking set $B$ w.r.t. lines, not defining a hyperplane in $PG(n,p^h)$, there exists a small linear blocking set $B'$ intersecting $B$ in $2\pmod{p}$ points.
\end{theorem}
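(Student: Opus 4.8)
The plan is to build $B'$ as a small perturbation of $B$ that shares a large explicit part with it, and to use Lemma \ref{lem10} to pin down the intersection exactly. Write $B=\mathcal{B}(U_N)$ with $N=h(n-1)$. First I would fix an $(N-1)$-dimensional subspace $U_{N-1}\subset U_N$ together with a spread element $R_1\in\mathcal{B}(U_N)\setminus\mathcal{B}(U_{N-1})$ meeting $U_N$ in a single point $P_1\notin U_{N-1}$; such an $R_1$ exists in abundance by Remark \ref{rem1}. Since $h>1$, the element $R_1$ contains a point $P_1'\neq P_1$, and I set $U_N':=\langle U_{N-1},P_1'\rangle$ and $B':=\mathcal{B}(U_N')$, which is again a small linear blocking set. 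By construction $P_1'\in R_1\cap U_N'$, so $R_1\in B'$; moreover $R_1\cap U_{N-1}\subseteq R_1\cap U_N=\{P_1\}$ is empty, so $R_1\in(B\cap B')\setminus\mathcal{B}(U_{N-1})$, while clearly $\mathcal{B}(U_{N-1})\subseteq B\cap B'$. Note also $P_1'\notin U_N$, so $U_N'\neq U_N$ as subspaces.

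The heart of the count is the claim that $B\cap B'=\mathcal{B}(U_{N-1})\cup\{R_1\}$, valid as soon as $B'\neq B$. Indeed, suppose some $R\in B\cap B'$ lies outside $\mathcal{B}(U_{N-1})\cup\{R_1\}$. Then $R_1,R$ are two distinct elements of both $\mathcal{B}(U_N)\setminus\mathcal{B}(U_{N-1})$ and $\mathcal{B}(U_N')\setminus\mathcal{B}(U_{N-1})$, so Lemma \ref{lem10}, applied to the common data $(U_{N-1},R_1,R)$, reconstructs $\mathcal{B}(U_N)$ and $\mathcal{B}(U_N')$ as one and the same set, forcing $B=B'$, a contradiction. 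Granting $B\neq B'$, Lemma \ref{lem9} gives $|B\cap B'|=|\mathcal{B}(U_{N-1})|+1\equiv 1+1=2\pmod p$, which is exactly the assertion.

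It remains to arrange $B'\neq B$, and this is the only place where the hypothesis that $B$ is not a hyperplane enters; I expect it to be the main obstacle. The plan is to argue the contrapositive: if \emph{every} perturbation $P_1\mapsto P_1'$ of the above type left the blocking set unchanged, then $B$ would be forced to be a hyperplane. Concretely, I would show that such rigidity of $B$ implies that for each of the $\geq p^N-p^{N-1}+1$ spread elements $R_1$ meeting $U_N$ in a single point (Remark \ref{rem1}), every line of $PG(n,q)$ through the corresponding point is either tangent to $B$ or entirely contained in $B$; Lemma \ref{N2} would then conclude that $B$ is a hyperplane, contrary to hypothesis. Hence a perturbation with $B'\neq B$ must exist.

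The delicate point is the local analysis inside the $(2h-1)$-dimensional space $S=\langle R_1,R_2\rangle$ corresponding to a line $\ell$ through $P_1$. One must show that if $\ell$ is a proper secant of $B$ (so that $S$ contains a spread element $R_3$ disjoint from $U_N$), then $U_{N-1}$ and $P_1'$ can be chosen so that $U_N'=\langle U_{N-1},P_1'\rangle$ meets $R_3$, witnessing $R_3\in B'\setminus B$ and hence $B'\neq B$. This reduces to a dimension count for the regular spread induced on $S$, controlling how the section $U_N'\cap S=\langle U_{N-1}\cap S,P_1'\rangle$ sweeps across the spread elements of $S$ as $P_1'$ ranges over $R_1$, and is exactly where the defining property of the Desarguesian spread (each space spanned by two spread elements being partitioned by spread elements) must be invoked.
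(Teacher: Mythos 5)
Your overall architecture is the same as the paper's: both proofs take $U_N'=\langle U_{N-1},P_1'\rangle$ for a suitable $(N-1)$-space $U_{N-1}\subset U_N$ and a suitable point $P_1'$ of a spread element $R_1$ meeting $U_N$ in one point, both use Lemma \ref{lem10} to show that $\mathcal{B}(U_N)\cap\mathcal{B}(U_N')$ contains no second element outside $\mathcal{B}(U_{N-1})$ once $B\neq B'$, and both plan to use Remark \ref{rem1} and Lemma \ref{N2} to exploit the hypothesis that $B$ is not a hyperplane. The count $|B\cap B'|\equiv 1+1\pmod p$ via Lemma \ref{lem9} is also correct and is the paper's.

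There is, however, a genuine gap exactly where you flag one: you never produce a choice of $U_{N-1}$ and $P_1'$ for which $B'\neq B$, and the observation $U_N'\neq U_N$ does not help, since distinct $N$-spaces can determine the same point set $\mathcal{B}(\cdot)$. This is the step where the non-hyperplane hypothesis does all the work, and the ``local analysis inside $\langle R_1,R_2\rangle$'' that you defer is the substance of the paper's proof, not a routine dimension count. The paper closes it as follows. Since $B$ is not a hyperplane, Remark \ref{rem1} and Lemma \ref{N2} yield spread elements $R_1,R_2\in\mathcal{B}(U_N)$, with $R_1\cap U_N$ a single point, and a spread element $R'\subset\langle R_1,R_2\rangle$ with $R'\cap U_N=\emptyset$. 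One then chooses $U_{N-1}\subset U_N$ with $R_2\in\mathcal{B}(U_{N-1})$ but $R_1\notin\mathcal{B}(U_{N-1})$ (your construction omits the first condition, which is essential), notes that $R_1,R_2,R'$ lie on an $(h-1)$-regulus, and takes a transversal line $m$ to this regulus through a point of $U_{N-1}\cap R_2$. With $P_1'=m\cap R_1$, the space $U_N'=\langle U_{N-1},P_1'\rangle$ contains $m$ and therefore meets $R'$, so $R'\in\mathcal{B}(U_N')\setminus\mathcal{B}(U_N)$ and $B'\neq B$. Without this transversal-line argument --- which is precisely where the regulus structure of the Desarguesian spread is used --- your proof is incomplete.
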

\begin{proof}
As we have seen before, a small linear blocking set $B$ in $PG(n,p^h)$ corresponds  to an $N$-dimensional space $U_N$ in $PG(h(n+1)-1,p)$. We will construct a subspace $U_{N}'$ that defines a second blocking set $B'$ intersecting $B$ in $2 \pmod{p}$ points.



There is a spread element $R'$, lying in a $(2h-1)$-dimensional space spanned by two spread elements $R_1$ and $R_2$, $R_1,R_2\in \mathcal{B}(U_N)$, where $R_1\cap U_N$  is a point, such that $R'$ does not intersect $U_N$. Suppose that for every $R'_1$ and $R'_2$ in $\mathcal{B}(U_N)$, where $R_1' \cap U_N$ is a  point, each spread element in $\langle R'_1,R'_2\rangle$ intersects $U_N$. Then $\mathcal{B}(U_N)$ defines a set $B$ of points in $PG(n,q)$ such that every line through $R'_1$ is tangent to $B$ in the point $R'_1$ or is entirely contained in $B$. But Remark \ref{rem1} and Lemma \ref{N2} then imply that $B$ is a hyperplane, a contradiction.

Choose an $(N-1)$-dimensional space $U_{N-1} \subset U_N$, such that $R_2\in \mathcal{B}(U_{N-1})$ and $R_1\notin \mathcal{B}(U_{N-1})$.

The elements $R_1,R_2,R'$ define an $(h-1)$-regulus. Take a transversal line $m$ to this $(h-1)$-regulus intersecting $U_{N-1}$ in a point of $U_{N-1} \cap R_2$. Then $\langle m,U_{N-1} \rangle$ is an $N$-dimensional space $U_N'$, defining a blocking set $B'$ of $PG(n,q)$.

Now $\mathcal{B}(U_N)$ and $ \mathcal{B}(U_N')$ have $\mathcal{B}(U_{N-1})$ and $R_1$ in common. So $B$ and $B'$ have at least $(1\mod p) +1$ points in common (see Lemma \ref{lem9}). 

If $\mathcal{B}(U_N)\cap \mathcal{B}(U'_N)$ contains another spread element $R_3 \notin \mathcal{B}(U_{N-1})$, $R_3 \neq R_1$, then Lemma \ref{lem10} implies that $\mathcal{B}(U_N)=\mathcal{B}(U'_N)$, contradicting $R'\in \mathcal{B}(U'_N)\backslash \mathcal{B}(U_N)$. It follows that the blocking sets $B$ and $B'$ of $PG(n,q)$ corresponding to $U_N$ and $U'_N$ intersect in $2 \pmod{p}$ points. 
\end{proof}

Using this result, we exclude in Theorem \ref{th4} all small non-trivial linear blocking sets as codewords.
\begin{theorem}\label{th4}
Let $v$ be the incidence vector of a small non-trivial linear blocking set of points w.r.t. lines of $PG(n,q)$. Then $v\notin C$. 
\end{theorem}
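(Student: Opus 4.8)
The plan is to argue by contradiction, using the ``mod $p$'' machinery of Lemmas \ref{lem7} and \ref{lem8} together with the existence statement of Theorem \ref{the5}. Write $v$ for the incidence vector of the small non-trivial linear blocking set $B=\mathcal{B}(U_N)$, so that $v$ takes only the values $0$ and $1$ (that is, $a=1$), and suppose for contradiction that $v\in C$.

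The first and most important step is to show that \emph{every} linear blocking set meets \emph{every} hyperplane of $PG(n,q)$ in $1\pmod p$ points. Under field reduction a hyperplane $H$ of $PG(n,q)$ lifts to an $(nh-1)$-dimensional subspace $\Pi_H$ of $PG(h(n+1)-1,p)$, which is partitioned by the spread elements corresponding to the points of $H$; these form a Desarguesian $(h-1)$-spread of $\Pi_H\cong PG(nh-1,p)$. A spread element $R\subseteq\Pi_H$ belongs to $B$ precisely when it meets $U_N$, and since the elements contained in $\Pi_H$ partition $\Pi_H$, this happens exactly when $R$ meets the subspace $U_N\cap\Pi_H$. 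As $\dim U_N+\dim\Pi_H=2nh-h-1\geq nh+h-1$ for $n\geq 2$, the intersection $U_N\cap\Pi_H$ is a nonempty subspace, so $B\cap H$ is exactly the linear blocking set it determines inside $\Pi_H$. Applying Lemma \ref{lem9} within $\Pi_H$ gives $|B\cap H|\equiv 1\pmod q$, hence $|B\cap H|\equiv 1\pmod p$, independently of the choice of $H$.

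By Lemma \ref{lem7} this shows that the incidence vector of any linear blocking set lies in $(C\cap C^\bot)^\bot$; in particular so does $v$, and so does the incidence vector $v'$ of the second small linear blocking set $B'$ supplied by Theorem \ref{the5}. Now I would apply Lemma \ref{lem8} with $c=v\in C$ (our contradiction hypothesis) and $v'$ playing the role of the $(C\cap C^\bot)^\bot$-vector: since $|supp(v)\cap H|\equiv 1\equiv|supp(v')\cap H|\pmod p$ for every hyperplane $H$, the lemma yields $|B\cap B'|=|supp(v)\cap supp(v')|\equiv|supp(v)|=|B|\pmod p$. Finally I would read off the contradiction from the two congruences already in hand: Lemma \ref{lem9} gives $|B|\equiv 1\pmod p$, whereas Theorem \ref{the5} gives $|B\cap B'|\equiv 2\pmod p$. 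Combined with $|B\cap B'|\equiv|B|\pmod p$, this forces $2\equiv 1\pmod p$, which is absurd. Hence $v\notin C$.

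I expect the main obstacle to be the first step, namely verifying carefully that $|B\cap H|\equiv 1\pmod p$ for every hyperplane $H$. The subtlety is entirely about field reduction: one must check that $\Pi_H$ really is a union of spread elements carrying a Desarguesian spread, so that Lemma \ref{lem9} may legitimately be invoked inside it, and that $U_N\cap\Pi_H$ is always nonempty. Once this congruence is established, the remainder is a mechanical combination of Lemmas \ref{lem7}, \ref{lem8} and \ref{lem9} with the intersection count of Theorem \ref{the5}.
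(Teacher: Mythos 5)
Your proof is correct and follows essentially the same route as the paper's: Theorem \ref{the5} supplies the second small linear blocking set $B'$, Lemmas \ref{lem9} and \ref{lem7} place its incidence vector in $(C\cap C^\bot)^\bot$, and Lemma \ref{lem8} forces $\vert B\cap B'\vert\equiv\vert B\vert\equiv 1\pmod{p}$, contradicting the $2\pmod{p}$ intersection from Theorem \ref{the5}. The only difference is that you spell out the field-reduction argument behind $\vert B'\cap H\vert\equiv 1\pmod{p}$ for every hyperplane $H$, a step the paper simply attributes to Lemma \ref{lem9}.
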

\begin{proof}
We know that $\vert supp(v) \vert\equiv 1\pmod{p}$. We know from Theorem \ref{the5}  that there exists a linear minimal blocking set $w$ such that $\vert supp(v) \cap supp(w) \vert\equiv 2\pmod{p}$. Since $\vert supp(w) \cap H\vert \equiv 1\pmod{p}$ for every hyperplane $H$ (see Lemma \ref{lem9}), it follows that $w \in (C \cap C^\bot)^\bot$ (Lemma \ref{lem7}). Suppose that $v \in C$, then Lemma \ref{lem8} implies that $\vert supp(v) \cap supp(w)\vert \equiv \vert supp(v) \vert$ $\equiv 1\pmod{p}$, a contradiction.
\end{proof}
Together with Lemma \ref{lem6b}, Theorem \ref{th4} gives the following  corollary.
\begin{corollary} \label{gev1}
The only possible codewords $c$ of $C$ of weight in $]\theta_{n-1},2q^{n-1}[$ are the scalar multiples of non-linear minimal blocking sets, intersecting every line in $1\pmod{p}$ points.
\end{corollary}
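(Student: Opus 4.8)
The plan is to combine the two main results already assembled, namely Lemma \ref{lem6b}, which turns a small-weight codeword into a minimal blocking set, and Theorem \ref{th4}, which forbids the incidence vector of a small non-trivial linear blocking set from lying in $C$, and to verify that the hypotheses of both apply everywhere in the interval $]\theta_{n-1},2q^{n-1}[$. Concretely, I would take an arbitrary codeword $c\in C$ with $wt(c)\in\,]\theta_{n-1},2q^{n-1}[$ and peel off the two inequalities separately: the upper bound feeds Lemma \ref{lem6b}, and the lower bound rules out the degenerate (hyperplane) case.

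First I would use $wt(c)<2q^{n-1}$. By Lemma \ref{lem6b}, $c$ takes only the two values $0$ and $a$ for some $a\in\mathbb{F}_p^\star$, the set $B:=supp(c)$ is a minimal blocking set with respect to lines of $PG(n,q)$, and $B$ meets every line in $1\pmod{p}$ points. Writing $v$ for the incidence vector of $B$, this says precisely that $c=a\,v$, so $c$ is already exhibited as a scalar multiple of the incidence vector of a minimal blocking set meeting every line in $1\pmod{p}$ points. It remains only to show that this blocking set is non-linear.

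Next I would invoke the lower bound $wt(c)=|B|>\theta_{n-1}$ to exclude triviality. By the theorem of Bose and Burton a blocking set with respect to lines has size at least $\theta_{n-1}$, with equality exactly for hyperplanes; since $|B|>\theta_{n-1}$, $B$ is not a hyperplane, and since $B$ is minimal it cannot properly contain a hyperplane either, as minimality would force equality. Hence $B$ is non-trivial. Finally, arguing by contradiction, if $B$ were a small linear blocking set then, being non-trivial, it would meet the hypotheses of Theorem \ref{th4}, giving $v\notin C$; but $v=a^{-1}c\in C$ because $a\neq 0$, a contradiction. Therefore $B$ is non-linear, and $c=a\,v$ is the claimed scalar multiple.

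The step requiring the most care, and the only genuinely non-formal point, is matching the terminology so that Theorem \ref{th4} is actually applicable: I must check that a \emph{linear} blocking set occurring here is exactly a \emph{small non-trivial} one in the sense of that theorem. Non-triviality is supplied by the strict inequality $wt(c)>\theta_{n-1}$, which is what prevents $B$ from collapsing to a hyperplane; the \emph{small}-ness requires no extra size estimate, since a linear blocking set is by definition the set $\mathcal{B}(U_N)$ associated with an $N$-dimensional subspace $U_N$ with $N=h(n-1)$. Everything else is a direct bookkeeping of the two inequalities, so I do not anticipate any substantial obstacle beyond confirming these hypotheses.
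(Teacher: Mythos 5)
Your proposal is correct and follows exactly the paper's (very brief) argument: the paper derives this corollary in one line by combining Lemma \ref{lem6b} with Theorem \ref{th4}, which is precisely your decomposition, including the observation that the strict lower bound $wt(c)>\theta_{n-1}$ together with Bose--Burton and minimality rules out the trivial (hyperplane) case. Your extra check that ``linear'' here coincides with ``small linear'' under the paper's definition ($N=h(n-1)=nh-h$) is a sound piece of bookkeeping that the paper leaves implicit.
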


\begin{remark}
Amongst many of the leading researchers dealing with blocking sets, it is believed (and conjectured, see \cite{sziklai}) that all small minimal blocking sets are linear. If that conjecture is true, then Corollary \ref{gev1} eliminates all possible codewords of weight in $]\theta_{n-1},2q^{n-1}[$. The cases in which the conjecture is proven (and relevant here) are mentioned below.
\end{remark}

In some cases, we can exclude non-linear blocking sets intersecting every line in $1\pmod{p}$ points. 
\begin{lemma}\label{lem:1}
The only minimal blocking set $B$ in $PG(n,p)$, with $p$ prime, such that every line contains $1\pmod{p}$ points of $B$, is a hyperplane.
\end{lemma}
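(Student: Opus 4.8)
The plan is to reduce the statement to Lemma \ref{N2} by exploiting the fact that the field is prime. Since $q=p$, every line of $PG(n,p)$ has exactly $p+1$ points, and the only integers in the range $[0,p+1]$ that are congruent to $1\pmod p$ are $1$ and $p+1$. Hence the hypothesis forces $\vert B\cap \ell\vert \in \lbrace 1,p+1\rbrace$ for every line $\ell$; that is, every line is either a tangent to $B$ or is entirely contained in $B$. First I would record this dichotomy explicitly, since it is the only place where the primality of $q$ is genuinely used, and it is exactly what converts the ``$1\pmod p$'' condition into the combinatorial hypothesis of Lemma \ref{N2}.

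Next I would observe that this dichotomy holds at \emph{every} point of $B$: for $R\in B$ and any line $\ell$ through $R$ we have $R\in B\cap \ell$, so $\ell$ is either tangent at $R$ or contained in $B$. Thus every point of $B$ satisfies the condition imposed on the points $R_i$ in Lemma \ref{N2}, and it only remains to check that $B$ contains enough such points. Because $B$ is a blocking set with respect to lines, the Bose--Burton bound \cite{Bose} gives $\vert B\vert \geq \theta_{n-1}=p^{n-1}+p^{n-2}+\cdots+1$, and since $h=1$ we have $N=h(n-1)=n-1$, so
$$\vert B\vert \geq \theta_{n-1}\geq p^{n-1}-p^{n-2}+1=p^{N}-p^{N-1}+1.$$
Applying Lemma \ref{N2} (with $q=p>2$) to any $p^{N}-p^{N-1}+1$ of these points then yields that the minimal blocking set $B$ is a hyperplane, which is the assertion. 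Note that no induction on $n$ is needed; the argument handles all dimensions uniformly.

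The argument is short, and the ``main obstacle'' is really just recognising the correct reduction rather than any hard estimate. The one point that needs care is the hypothesis $q>2$ in Lemma \ref{N2}: the reduction above applies verbatim only for odd $p$, and the case $p=2$ (where lines meet $B$ in $1$ or $3$ points) would have to be addressed separately, either by a direct check or by noting that it lies outside the regime of interest. I would also emphasise in the write-up that minimality of $B$ enters only through Lemma \ref{N2}, whose conclusion already incorporates it to pin down $B$ as the full hyperplane rather than as a proper subset of one.
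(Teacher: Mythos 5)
Your reduction is sound for $p>2$ and does prove the lemma there, but it takes a different route from the paper. The paper does not invoke Lemma \ref{N2} at all: it runs a short self-contained induction, growing a subspace inside $B$. Starting from a point of $B$, it notes (using the same dichotomy you isolate, namely that every line meets $B$ in $1$ or $p+1$ points) that any $2$-secant lies entirely in $B$, so joining a point of $B\setminus \pi_m$ to a contained $m$-space $\pi_m$ yields a contained $(m+1)$-space; for $m<n-1$ a line skew to $\pi_m$ exists and must be blocked, supplying the required new point of $B$; the induction terminates with a hyperplane inside $B$, and minimality finishes. What your route buys is brevity and reuse of existing machinery; what it costs is twofold. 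First, Lemma \ref{N2} carries the hypothesis $q>2$, so your argument genuinely does not cover $p=2$, which the lemma as stated includes --- you flag this, but the fix is so cheap it should just be done: the dichotomy shows $B$ is closed under taking the line through any two of its points, hence $B$ is a subspace, hence (being a blocking set) of dimension at least $n-1$, hence a hyperplane by minimality; this works uniformly for all $p$ and is essentially the paper's proof. Second, the proof of Lemma \ref{N2} is itself exactly this subspace-growing induction, so your reduction is in effect a detour through a more general statement whose proof is the paper's proof; you should also say explicitly that Lemma \ref{N2}, unlike the surrounding spread-representation material (which the paper restricts to $h>1$), is a purely projective statement that may legitimately be applied with $h=1$, $N=n-1$. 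Your verification that enough points satisfy the hypothesis of Lemma \ref{N2} --- every point of $B$ does, and $\vert B\vert\geq\theta_{n-1}>p^{N}-p^{N-1}+1$ by Bose--Burton --- is correct.
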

\begin{proof}Let $B$ be a blocking set in $PG(n,p)$ such that every line intersects $B$ in $1\pmod{p}$ points. If $B \supset PG(m,p)$, $B \neq PG(m,p)$, for some $m$, then $B \supseteq PG(m+1,p)$ since we can connect a point $R'$ in $B \setminus PG(m,p)$ to all points of $PG(m,p)$. All these lines have to lie in $B$, so $PG(m+1,p)=\langle R',PG(m,p)\rangle\subset B$. There is always a line skew to $PG(m,p)$, with $m<n-1$, so we can always find a point $R'\in B \setminus PG(m,p)$ for $m<n-1$. This implies that the only possibility for a minimal blocking set $B$ such that every line has $1\pmod{p}$ points of $B$, is a hyperplane $PG(n-1,p)$.
\end{proof}
The next corollary, following from Lemma \ref{lem:1},  extends the result of   Chouinard (Theorem \ref{ch} (1)) to general dimension.
\begin{corollary} \label{gevolg}
There are no codewords $c$, with $\theta_{n-1} <wt(c)<2p^{n-1}$, in $C(PG(n,p))$, $p$ prime.
\end{corollary}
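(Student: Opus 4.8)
The plan is to specialise Lemma \ref{lem6b} to the prime case $q=p$ and then invoke the classification in Lemma \ref{lem:1}. Concretely, I would argue by contradiction: suppose there exists a codeword $c$ of $C(PG(n,p))$ with $\theta_{n-1}<wt(c)<2p^{n-1}$. Since $wt(c)<2p^{n-1}=2q^{n-1}$, the hypotheses of Lemma \ref{lem6b} are met, so (after rescaling) $c$ takes only the values $0$ and some fixed $a\in\F_p^\star$, its support $B=supp(c)$ is a minimal blocking set with respect to lines of $PG(n,p)$, and $B$ meets every line in $1\pmod p$ points.

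With these properties in hand, the second step is immediate: $B$ is a minimal blocking set of $PG(n,p)$ meeting every line in $1\pmod p$ points, and Lemma \ref{lem:1} asserts that the only such blocking set is a hyperplane. A hyperplane has exactly $\theta_{n-1}$ points, so $wt(c)=|B|=\theta_{n-1}$. This contradicts the assumption $wt(c)>\theta_{n-1}$, and the corollary follows.

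The point I would emphasise is that one must \emph{not} route this through Corollary \ref{gev1} or Theorem \ref{th4}. Those results rest on the field-reduction description of linear blocking sets, which (as noted in the Remark preceding Lemma \ref{lem9}) is only set up for $h>1$; here $q=p$ corresponds to $h=1$. Fortunately the prime case is genuinely easier rather than harder: the field $\F_p$ has no proper subfield, so the dichotomy between linear and non-linear blocking sets that complicates the general argument disappears, and Lemma \ref{lem:1} settles the matter outright. There is therefore no real obstacle to overcome; the only thing to check is that the two lemmas apply verbatim, which they do since both are stated for arbitrary dimension $n$ and the weight bound $wt(c)<2p^{n-1}$ is exactly the hypothesis of Lemma \ref{lem6b} with $q=p$.
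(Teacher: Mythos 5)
Your argument is exactly the paper's: the corollary is stated as "following from Lemma \ref{lem:1}", and the intended proof is precisely the combination of Lemma \ref{lem6b} (a codeword of weight below $2q^{n-1}$ is a scalar multiple of a minimal blocking set meeting every line in $1\pmod p$ points) with Lemma \ref{lem:1} (for $q=p$ such a blocking set is a hyperplane, of weight $\theta_{n-1}$). Your aside about avoiding the field-reduction machinery for $h=1$ is also consistent with the paper's own remark that the case $h=1$ is handled by this corollary.
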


We  turn our attention to minimal blocking sets $B$, with $|B|\in ]\theta_{n-1},2q^{n-1}[$, in $PG(n,q)$, $q=p^h$,  $p$ prime, $h\geq 1$, such that every line contains $1\pmod{p}$ points of $B$. Let $e$ be the maximal integer for which $B$ intersects every line in $1\pmod{p^e}$ points. Then results of Sziklai prove that $e$ is a divisor of $h$ \cite{sziklai}.

In \cite[Corollary 5.2]{SF:07}, it is proven that 
\[|B|\geq q^{n-1}+\frac{q^{n-1}}{p^e+1}-1.\]

We now derive the upper bound on $|B|$, based on \cite[Theorem 5.3]{SF:07}.

\begin{theorem} \label{th:2} Let $B$ be a minimal blocking set w.r.t. the lines of $PG(n,q)$, $q=p^h$, $p$ prime, $h\geq 1$, intersecting every line in $1\pmod{p^e}$ points, with $e$ the maximal integer for which this is true, and assume that $|B| \in ]\theta_{n-1},2q^{n-1}[$ and that $p^e>3$.

Then \[|B| \leq q^{n-1}+\frac{2q^{n-1}}{p^e}.\]
\end{theorem}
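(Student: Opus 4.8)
The plan is to obtain the bound by transferring the structural information on small minimal blocking sets of $1\pmod{p^e}$-type to the higher-dimensional setting, and then to use the hypothesis $|B|\in\,]\theta_{n-1},2q^{n-1}[$ together with the lower bound of \cite[Corollary 5.2]{SF:07} to isolate the correct range. Concretely, I would run an induction on $n$, the base case $n=2$ being exactly the planar estimate supplied by \cite[Theorem 5.3]{SF:07}; in that case the claim reads $|B|\le q+2q/p^e$. Before starting, I would record the two facts I am allowed to use: that $e\mid h$ (so $p^e$ is a genuine subfield order and $q^{n-1}/p^e$ is an integer), and the lower bound $|B|\ge q^{n-1}+q^{n-1}/(p^e+1)-1$. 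The divisibility is what forces the admissible sizes into a discrete set, and the lower bound is what guarantees that the interval we land in is the first one and not a degenerate (trivial) case.

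For the inductive step I would pass to a hyperplane $H$. Since every line of $H$ is a line of $PG(n,q)$, the section $B\cap H$ blocks all lines of $H$ and meets each of them in $1\pmod{p^e}$ points, so after reducing it to its essential part (which is unique by Corollary \ref{cor1}) we obtain a minimal blocking set of the same type in $H\cong PG(n-1,q)$, to which the inductive upper bound applies. The sizes in dimensions $n$ and $n-1$ are tied together by the double count $\sum_{H}|B\cap H|=|B|\,\theta_{n-1}$, taken over all $\theta_n$ hyperplanes; averaging is meant to convert an upper bound on the typical section into the upper bound $q^{n-1}+2q^{n-1}/p^e$ on $|B|$, with the condition $p^e>3$ used to absorb the lower-order error terms so that the relevant interval comes out clean.

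The main obstacle is controlling the hyperplane sections. Two things can go wrong: a section $B\cap H$ may be much larger than its essential part (the discarded points are redundant for blocking inside $H$ but may be essential for $B$ through tangent lines leaving $H$), and the maximal exponent of the section can jump above $e$, so that the dimension-$(n-1)$ bound is a priori about a different modulus. I would dispose of the exponent issue by noting that a larger exponent only sharpens the inductive bound, so it can never violate the target inequality; the genuinely delicate point is bounding the extra, section-redundant points, and this is precisely where \cite[Theorem 5.3]{SF:07} is invoked, since it supplies a uniform estimate for such blocking sets directly in $PG(n,q)$ and thereby bypasses the naive section-by-section accounting. Granting that input, the range hypothesis $|B|<2q^{n-1}$ eliminates every admissible interval beyond the first, and the arithmetic then collapses to $|B|\le q^{n-1}+2q^{n-1}/p^e$.
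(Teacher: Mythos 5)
Your plan does not match the paper's proof, and as written it has unresolved gaps. The paper proves the bound in one step, directly in $PG(n,q)$, by the standard second-moment count: with $E=p^e$ and $\tau_{1+iE}$ the number of $(1+iE)$-secant lines, it writes down the three standard equations (total number of lines; flags $(R,l)$ with $R\in B$, giving $|B|\theta_{n-1}$; ordered pairs of points of $B$ on a common line, giving $|B|(|B|-1)$) and then uses the positivity of $\sum_{i}i(i-1)E^2\tau_{1+iE}$ to obtain a quadratic inequality in $|B|$. The hypotheses $E>3$ and $\theta_{n-1}<|B|<2q^{n-1}$ locate $|B|$ below the smaller root, which gives $|B|\le q^{n-1}+2q^{n-1}/E$. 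No induction on the dimension and no reduction to the planar case is involved; the divisibility $e\mid h$ and the lower bound from \cite[Corollary 5.2]{SF:07} that you lean on are not used in the argument at all.

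Your inductive route has three concrete problems that you flag but do not resolve. First, $B\cap H$ need not be minimal, and the points discarded when passing to its minimal reduction are exactly the ones your double count $\sum_H|B\cap H|=|B|\theta_{n-1}$ must retain; you have no bound on how many there are per hyperplane. Second, even the minimal reduction of $B\cap H$ need not satisfy the size-range hypothesis $|B'|\in\,]\theta_{n-2},2q^{n-2}[$ of the inductive statement (a section can absorb almost all of $B$ if $B$ clusters near a hyperplane), so the inductive bound cannot be applied uniformly over all $\theta_n$ hyperplanes. Third, your fix for the delicate point is to invoke \cite[Theorem 5.3]{SF:07} as "a uniform estimate for such blocking sets directly in $PG(n,q)$" --- but that is essentially the statement being proved, so the appeal is circular and makes the entire inductive scaffolding redundant. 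The honest version of your plan is simply to redo the planar variance argument in $PG(n,q)$, which is what the paper does.
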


\begin{proof} Let $E=p^e$. Let $\tau_{1+iE}$ be the number of lines intersecting
$B$ in $1+iE$ points.
We count the number of lines, the number of  pairs
$(R,l)$, with $R \in B$ and with $l$ a line through $R$, and
the number of triples $(R,R',l)$, with $R$ and $R'$ distinct points
of $B$ and $l$ a line passing through $R$ and $R'$.

Then the following formulas are valid:

\begin{eqnarray}
 \sum_{i \geq 0} \tau_{1+iE}&=&\frac{(q^{n+1}-1)(q^n-1)}{(q^2-1)(q-1)},\\
\sum_{i \geq 0} (1+iE)\tau_{1+iE} & = & |B|
\left(\frac{q^n-1}{q-1}\right),\\
\sum_{i \geq 0} (1+iE)(1+iE-1)\tau_{1+iE} &=&  |B|(|B|-1).
\end{eqnarray}

Then $\sum_{i\geq 0} i(i-1) E^2 \tau_{1+iE} \geq 0$
implies that

\[|B|(|B|-1) -(1+E)|B|\left(\frac{q^n-1}{q-1}\right)
+(1+E)\frac{(q^{n+1}-1)(q^n-1)}{(q^2-1)(q-1)} \geq 0.\]

Under the condition $3<E$ and $\vert B \vert \in ]\theta_{n-1},2q^{n-1}[$, this implies that \[|B| \leq 
q^{n-1}+\frac{2q^{n-1}}{E}.\]

\end{proof} 

To exclude codewords in the code of $PG(n,p^2)$, with $p$ a prime, we can use the following theorem of Weiner which implies that every small minimal blocking set in $PG(n,p^2)$ is linear.

\begin{theorem}\label{Weiner} \cite{Weiner} A non-trivial minimal blocking set of $PG(n,p^2)$, $p>11$, $p$ prime, with respect to $k$-spaces and of size less than $3(p^{2(n-k)}+1)/2$ is a $(t,2((n-k)-t-1))$-Baer cone with as vertex a $t$-space and as base a $2((n-k)-t-1)$-dimensional Baer subgeometry, where $\max \lbrace -1,n-2k-1 \rbrace \leq t < n-k-1$.
\end{theorem}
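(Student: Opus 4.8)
\medskip
\noindent\emph{Proof proposal.} This is a classification theorem from the blocking-set literature, and the route I would take is the standard one: reduce to the planar case and lift it by induction on the codimension $m:=n-k$. The cornerstone is the planar base case, where I would establish that every small minimal blocking set with respect to lines in $PG(2,q)$, $q=p^2$, of size less than $3(q+1)/2$ is a Baer subplane $PG(2,p)$. This is where the genuine work sits: writing down the R\'edei polynomial of $B$ and applying the lacunary-polynomial method of Blokhuis and Sz\H{o}nyi, one forces the tangent lines to be highly structured, shows that $B$ meets every line in $1\pmod p$ points, and finally that $B$ is a subgeometry. The hypothesis $p>11$ enters precisely to keep these polynomial and stability estimates valid. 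In the notation of the theorem this is the case $m=1$, $t=-1$: empty vertex, Baer-subplane base.

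For $m>1$ I would induct on $m$. Given $B$ minimal with respect to $k$-spaces and $|B|<\tfrac32(q^m+1)$, a counting of the $k$-spaces through the points of $B$ together with the size bound first shows that $B$ meets every line in $0$, $1$, or $1\pmod p$ points, so $B$ is a $1\pmod p$ set in the sense used throughout this paper. I would then pick a point $P\in B$ and either project $B$ from $P$ onto a hyperplane $PG(n-1,q)$, obtaining a blocking set with respect to $(k-1)$-spaces, or intersect $B$ with a suitable subspace. The Sz\H{o}nyi--Weiner stability lemma bounds the number of tangent $k$-spaces, and this is exactly what lets me control the size of the projected (or intersected) set and check that it stays below the corresponding threshold $\tfrac32(q^{m-1}+1)$, so that the induction hypothesis applies.

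By induction the reduced set is a Baer cone, and it remains to reconstruct the full cone. I would define the vertex $T$ as the set of points $P\in B$ through which $B$ is conical (every line through $P$ that meets $B$ again lies entirely in $B$), prove that $T$ is a subspace whose dimension $t$ lies in the stated range $\max\{-1,n-2k-1\}\le t<n-k-1$, and show that projecting from $T$ collapses $B$ onto a $2(m-t-1)$-dimensional Baer subgeometry in a complementary space; assembling these recovers the $(t,2(m-t-1))$-Baer-cone structure. The precise dimension count is dictated by the requirement that the base be a minimal Baer-type blocking set in its own span.

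The main obstacle is twofold. The planar base case is itself a deep polynomial-method theorem, not something to be rederived in passing; and in the inductive step the delicate point is to keep strict control of sizes and intersection numbers under projection, showing that smallness and minimality are inherited and that no points appear outside the cone, since the threshold $\tfrac32(q^m+1)$ is tight enough that a loose estimate would break the induction. For this reason, in the present paper I would not reprove the result but simply invoke it as \cite{Weiner}, using it only to conclude that every small minimal blocking set of $PG(n,p^2)$ is linear (a Baer cone), which is exactly what is needed to exclude the corresponding codewords.
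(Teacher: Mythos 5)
The paper offers no proof of this theorem: it is quoted verbatim from \cite{Weiner} and used as a black box, so your final decision --- to invoke the reference rather than rederive it --- is exactly what the paper does, and that is the only point on which a comparison is possible. Your sketch of how Weiner's argument goes (planar base case for $PG(2,p^2)$ via the lacunary/R\'edei polynomial method, then induction on the codimension $n-k$ with projections and a stability control on tangent $k$-spaces, reconstructing the vertex as the set of conical points) is a fair description of the strategy in that strand of the literature, but as you yourself acknowledge it is a roadmap with the hard steps (the planar classification and the tight size bookkeeping under projection) left to the cited source, so it should not be read as a proof.
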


Theorem \ref{Weiner}, together with Theorem \ref{th:2}, yields the following corollary.
\begin{corollary} \label{kw}
There are no codewords $c$, with $wt(c) \in ]\theta_{n-1},2q^{n-1}[$, in $C(PG(n,q))$, $q=p^2$, $p>11$, $p$ prime.
\end{corollary}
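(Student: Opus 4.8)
The plan is to combine the structural reduction already achieved in Corollary \ref{gev1} with the sharper size estimate of Theorem \ref{th:2}, so as to push the relevant blocking set into the range where Weiner's classification (Theorem \ref{Weiner}) applies. I argue by contradiction. Suppose $c$ is a codeword of $C(PG(n,p^2))$ with $wt(c)\in\,]\theta_{n-1},2q^{n-1}[$. By Corollary \ref{gev1}, $c$ is, up to a nonzero scalar, the incidence vector of a \emph{non-linear} minimal blocking set $B$ with respect to lines of $PG(n,p^2)$, with $|B|=wt(c)\in\,]\theta_{n-1},2q^{n-1}[$, and with $B$ meeting every line in $1\pmod p$ points. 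Since $B$ is minimal and $|B|>\theta_{n-1}$, it cannot contain a hyperplane (otherwise minimality would force $B$ to equal that hyperplane, of size $\theta_{n-1}$), so $B$ is non-trivial. The goal is to show that $B$ must be a Baer cone, hence linear, contradicting the non-linearity guaranteed by Corollary \ref{gev1}.

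The main obstacle is a size mismatch: Weiner's theorem classifies non-trivial minimal blocking sets with respect to lines (the case $k=1$) only up to size $3(p^{2(n-1)}+1)/2=3(q^{n-1}+1)/2$, and this threshold lies strictly below the upper end $2q^{n-1}$ of our interval, since $2q^{n-1}>3(q^{n-1}+1)/2$ whenever $q^{n-1}>3$. Thus Theorem \ref{Weiner} cannot be invoked directly for all admissible weights, and I must first improve the upper bound on $|B|$. This is exactly where Theorem \ref{th:2} is used. Let $e$ be the maximal integer for which $B$ meets every line in $1\pmod{p^e}$ points; then $e\geq 1$, and by the cited result of Sziklai \cite{sziklai} $e$ divides $h=2$, so $e\in\{1,2\}$. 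In either case $p^e\geq p>11>3$, so the hypotheses of Theorem \ref{th:2} are met and it yields
\[|B|\leq q^{n-1}+\frac{2q^{n-1}}{p^e}\leq q^{n-1}+\frac{2q^{n-1}}{p}.\]

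It then remains to verify the numerical inequality $q^{n-1}+2q^{n-1}/p<3(q^{n-1}+1)/2$. Clearing denominators, this reduces to $4q^{n-1}<p(q^{n-1}+3)$, equivalently $q^{n-1}(4-p)<3p$, which holds for every $p>4$ and in particular for $p>11$. Hence $|B|<3(q^{n-1}+1)/2$, and Theorem \ref{Weiner} (with $k=1$) now applies: the non-trivial minimal blocking set $B$ is a Baer cone, that is, a cone with a $t$-dimensional vertex over a Baer subgeometry. Such cones are linear blocking sets in the Desarguesian-spread sense used throughout this section, which contradicts the non-linearity of $B$. Therefore no codeword of weight in $]\theta_{n-1},2q^{n-1}[$ can exist. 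The only delicate point is the interplay in the previous paragraph — the case analysis on $e$ and the confirmation that the bound $q^{n-1}+2q^{n-1}/p^e$ stays below Weiner's threshold for both admissible values of $e$; once $p>4$ is seen to suffice, the argument closes. One should also record the standard fact that the Baer cones produced by Theorem \ref{Weiner} are indeed linear blocking sets, which is what makes them collide with the non-linear hypothesis.
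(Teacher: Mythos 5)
Your argument is correct and follows the paper's intended route exactly: the paper's proof of this corollary is the one-line observation that Theorem \ref{th:2} pushes the size of the putative blocking set below Weiner's threshold $3(q^{n-1}+1)/2$, whereupon Theorem \ref{Weiner} forces it to be a (linear) Baer cone, contradicting Corollary \ref{gev1}. You have merely made explicit the details the paper leaves to the reader (non-triviality of $B$, the case split on $e\in\{1,2\}$, and the numerical comparison), all of which check out.
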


For general $q=p^h$, $p$ prime, $h\geq 3$, Theorem \ref{th:2} implies that the weights of possible codewords $c$ in $C$, with $wt(c) \in ]\theta_{n-1},2q^{n-1}[$, corresponding to non-linear blocking sets intersecting every line in $1\pmod{p^e}$ points, with $e$ the maximal integer for which this is true,  must belong to certain small intervals.

In particular, we exclude all the codewords with weight in $[3q^{n-1}/2,2q^{n-1}[$; in this way, excluding half of the interval $]\theta_{n-1},2q^{n-1}[$.

\begin{corollary}
There are no codewords $c$ in $C(PG(n,q))$, $q=p^h$, $p$ prime, $p>3$, $h\geq3$, with weight in $[3q^{n-1}/2,2q^{n-1}[$.
\end{corollary}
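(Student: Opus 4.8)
The plan is to combine the structural reduction of Corollary \ref{gev1} with the size bound of Theorem \ref{th:2}; once both are in hand, the statement follows by a short arithmetic comparison and an argument by contradiction.

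First I would suppose, for contradiction, that $c$ is a codeword of $C(PG(n,q))$ with $wt(c)\in[3q^{n-1}/2,2q^{n-1}[$. The preliminary observation is that this interval sits inside $]\theta_{n-1},2q^{n-1}[$: since $\theta_{n-1}=(q^n-1)/(q-1)<q^{n-1}\cdot q/(q-1)$ and $q=p^h\geq 5^3>3$, one checks that $q/(q-1)<3/2$, so $\theta_{n-1}<3q^{n-1}/2$, while the upper endpoint is immediate. Hence Corollary \ref{gev1} applies, and tells us that, up to a scalar, $B:=supp(c)$ is a minimal blocking set with respect to the lines of $PG(n,q)$ meeting every line in $1\pmod p$ points, with $|B|=wt(c)$.

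Next I would let $e$ be the maximal integer for which $B$ meets every line in $1\pmod{p^e}$ points. Because $B$ already meets every line in $1\pmod p$ points we have $e\geq 1$, and since $p>3$ is prime we have $p\geq 5$, whence $p^e\geq p>3$. This is precisely the hypothesis $p^e>3$ needed to invoke Theorem \ref{th:2}; together with $|B|\in\,]\theta_{n-1},2q^{n-1}[$ this yields $|B|\leq q^{n-1}+2q^{n-1}/p^e$. Using $p^e\geq 5$ I would then bound $|B|\leq q^{n-1}+2q^{n-1}/5=7q^{n-1}/5<3q^{n-1}/2$, contradicting $wt(c)=|B|\geq 3q^{n-1}/2$. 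Therefore no such codeword can exist.

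The decisive content lies not in this deduction but in the two results it rests on: the reduction to minimal blocking sets meeting every line in $1\pmod p$ points (Corollary \ref{gev1}) and, above all, the upper bound of Theorem \ref{th:2}. The only point demanding care here is verifying that the hypotheses of Theorem \ref{th:2} are genuinely met --- that $|B|$ lies in the open interval and that $p^e>3$ --- both of which follow from the standing hypothesis $p>3$ together with the $1\pmod p$ intersection property supplied by Corollary \ref{gev1}. The condition $h\geq 3$ itself plays no direct role in the argument; it merely delimits this as the remaining case, since $h=1$ and $h=2$ are already fully settled by Corollaries \ref{gevolg} and \ref{kw}.
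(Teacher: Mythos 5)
Your proposal is correct and follows exactly the route the paper intends (the paper gives no separate proof, but the surrounding discussion makes clear the corollary is the combination of Corollary \ref{gev1} with Theorem \ref{th:2}): the $1\pmod p$ intersection property forces $e\geq 1$, hence $p^e\geq 5$ when $p>3$, and the bound $|B|\leq q^{n-1}+2q^{n-1}/p^e\leq 7q^{n-1}/5<3q^{n-1}/2$ gives the contradiction. Your verification that the hypotheses of Theorem \ref{th:2} hold, and your remark that $h\geq 3$ only serves to separate this case from Corollaries \ref{gevolg} and \ref{kw}, are both accurate.
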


\section{Minimum weight codewords in the dual code generated by the incidence matrix of points and hyperplanes of $PG(n,q)$}
In this section, we consider codewords $c$ $\in$ $C(PG(n,q))^\bot$, $q=p^h$, $p$ prime, $h\geq 1$, with $C(PG(n,q))$ the $p$-ary linear  code generated by the incidence matrix of points and hyperplanes in $PG(n,q)$, $q=p^h$, $p$ prime, $h\geq 1$. This means that $(c,H)=0$ for all hyperplanes $H$ of $PG(n,q)$, since a codeword in $C^\bot$ is orthogonal to all the rows of the generator matrix of $C$.

For every hyperplane $H$, $$\sum_{P\in supp(c)\cap H}c_{P} = 0.$$

Denote the minimum distance of a linear code $C$ by $d(C)$. Note that $d(C^\perp)\leq 2q$ since the difference of the incidence vectors of two intersecting lines is  a codeword of $C^\perp$. 

\begin{lemma}\label{min}
For each $n\geq 3$, the following holds:
$$d(C(PG(n,q))^\bot) \geq d(C(PG(n-1,q))^\bot)\geq \cdots \geq d(C(PG(2,q))^\bot).$$
\end{lemma}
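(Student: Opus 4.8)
We need to show that the minimum weight of the dual code of points-and-hyperplanes in $PG(n,q)$ is at least the minimum weight of the dual code of points-and-lines in $PG(2,q)$. The chain of inequalities is just the transitive closure, so it suffices to prove the single step $d(C(PG(n,q))^\bot) \geq d(C(PG(n-1,q))^\bot)$ for each $n \geq 3$.

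**What the reader already has:** We know that a codeword $c \in C(PG(n,q))^\bot$ satisfies $(c,H) = 0$ for every hyperplane $H$. We also have (from the proof of Theorem~\ref{th11}) the key observation that the restriction of a hyperplane to a lower-dimensional subspace behaves well: intersecting a hyperplane with a subspace $\Sigma$ gives either a hyperplane of $\Sigma$ or all of $\Sigma$.

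**The plan.** The natural strategy is a restriction/projection argument. Let me think about which direction works...

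---

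The plan is to prove the single-step inequality $d(C(PG(n,q))^\bot) \geq d(C(PG(n-1,q))^\bot)$ and then iterate, since the chain of inequalities in the statement is just its transitive closure. The idea is to take a minimum-weight codeword $c$ of $C(PG(n,q))^\bot$ and produce from it a nonzero codeword of $C(PG(n-1,q))^\bot$ whose weight does not exceed $wt(c)$; the minimality of the latter code then forces $d(C(PG(n-1,q))^\bot) \leq wt(c) = d(C(PG(n,q))^\bot)$.

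First I would recall the mechanism used in the proof of Theorem~\ref{th11}: if $\Sigma$ is any $k$-dimensional subspace of $PG(n,q)$, then the intersection of any hyperplane $H$ with $\Sigma$ is either a hyperplane of $\Sigma$ (when $\Sigma \not\subseteq H$) or all of $\Sigma$ (when $\Sigma \subseteq H$). Consequently, the restriction $c|_\Sigma$ of a codeword $c \in C(PG(n,q))^\bot$ to the points of $\Sigma$ satisfies $(c|_\Sigma, H') = 0$ for every hyperplane $H'$ of $\Sigma$: indeed, every hyperplane $H'$ of $\Sigma$ is cut out by at least one hyperplane $H$ of the ambient space with $\Sigma \cap H = H'$, and the orthogonality $(c,H)=0$ restricts to $(c|_\Sigma, H') = 0$ since points of $H$ outside $\Sigma$ do not contribute. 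Thus $c|_\Sigma \in C(\Sigma)^\bot$ whenever it is the restriction of a dual codeword of the ambient space. Taking $\Sigma$ to be a hyperplane of $PG(n,q)$ (so $\Sigma \cong PG(n-1,q)$) gives the desired reduction in dimension, provided we can arrange $c|_\Sigma \neq 0$ without its weight exceeding that of $c$.

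The weight bound is automatic: the support of $c|_\Sigma$ is $supp(c) \cap \Sigma$, so $wt(c|_\Sigma) \leq wt(c)$, with strict inequality exactly when $c$ has support outside $\Sigma$. The main obstacle, therefore, is ensuring that $c|_\Sigma$ is \emph{nonzero} for a suitable choice of hyperplane $\Sigma$ while keeping $\Sigma \cong PG(n-1,q)$. The plan is to pick any point $P \in supp(c)$ and choose the hyperplane $\Sigma$ to pass through $P$; this guarantees $P \in supp(c) \cap \Sigma$, so $c|_\Sigma$ is a nonzero vector. Since $\Sigma$ is a hyperplane it is isomorphic to $PG(n-1,q)$, and the restriction lies in $C(PG(n-1,q))^\bot$ by the preceding paragraph. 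Hence $d(C(PG(n-1,q))^\bot) \leq wt(c|_\Sigma) \leq wt(c) = d(C(PG(n,q))^\bot)$, which is exactly the single-step inequality. One point I would want to verify carefully is that \emph{every} hyperplane of $\Sigma \cong PG(n-1,q)$ really does arise as $\Sigma \cap H$ for some hyperplane $H$ of $PG(n,q)$ not containing $\Sigma$; this is a standard fact about projective spaces (a hyperplane of $\Sigma$ spans an $(n-2)$-space, which lies in exactly $q+1$ hyperplanes of $PG(n,q)$, of which $q$ do not contain $\Sigma$), and it is what makes the restriction land in the full dual code rather than a proper subspace. Iterating the single step from dimension $n$ down to dimension $2$ yields the full chain, completing the proof.
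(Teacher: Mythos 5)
There is a genuine gap, and it sits exactly at the step you flagged as needing care --- but the problem is not the one you checked. You verify that every hyperplane $H'$ of $\Sigma$ arises as $\Sigma\cap H$ for a hyperplane $H$ of the ambient space; that is true. The false step is the claim that ``the orthogonality $(c,H)=0$ restricts to $(c|_\Sigma,H')=0$ since points of $H$ outside $\Sigma$ do not contribute.'' They do contribute: $(c,H)=\sum_{P\in H\cap\Sigma}c_P+\sum_{P\in H\setminus\Sigma}c_P=0$, and the second sum need not vanish when $supp(c)\not\subseteq\Sigma$, so you cannot conclude that the first sum is zero. Restriction to a subspace preserves membership in $C$ (this is what the proof of Theorem~\ref{th11} uses: a hyperplane restricts to a line or a sum of lines of a pencil), but it does \emph{not} preserve membership in $C^\bot$. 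A concrete counterexample: in $PG(3,q)$ take $c=l_1-l_2$ for two intersecting lines, and let $\Sigma$ be a plane containing $l_1$ and meeting $l_2$ only in $l_1\cap l_2$. Then $c|_\Sigma$ is the characteristic vector of $l_1\setminus l_2$, and any line of $\Sigma$ missing the point $l_1\cap l_2$ meets this support in exactly one point, so $(c|_\Sigma,m)=1\neq 0$ and $c|_\Sigma\notin C(\Sigma)^\bot$.

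The paper avoids this by using \emph{projection} rather than restriction: choose a point $R\notin supp(c)$ lying on a tangent line to $supp(c)$ and a hyperplane $H$ not through $R$, and define $c'_P=\sum c_{P_i}$ over the points $P_i$ of $supp(c)$ on the line $\langle R,P\rangle$, for $P\in H$. For a hyperplane $H'$ of $H$, the cone $\langle R,H'\rangle$ is a hyperplane of $PG(n,q)$, and since $R\notin supp(c)$ one gets $(c',H')=(c,\langle R,H'\rangle)=0$; the tangent-line condition guarantees $c'\neq 0$, and $wt(c')\leq wt(c)$ because each point of $supp(c')$ absorbs at least one point of $supp(c)$. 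That is the mechanism your argument is missing; as written, your single-step inequality is not established.
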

\begin{proof}
Let $c$ be a codeword of $C(PG(n,q))^\bot$ of minimum weight, and let $R$ be a point of $PG(n,q)\backslash supp(c)$, with $R$ on a tangent line to $supp(c)$, and let $H$ be a hyperplane of $PG(n,q)$ not containing $R$. For each point $P\in H$, define $c'_P=\sum c_{P_i}$, with $P_i$ the points of $supp(c)$ on the line $\langle R,P\rangle$, and let $c'$ denote the vector with coordinates $c'_P$, $P\in H $. Note that $c'\neq 0$, since $R$ lies on a tangent line to $supp(c)$.

Then it easily follows that $c'\in C(PG(n-1,q))^\bot$, and $supp(c')$ is contained in the projection of $supp(c)$ from the point $R$ onto the hyperplane $H=PG(n-1,q)$. Clearly, $\vert supp(c')\vert \leq \vert supp(c) \vert$. 

Using this relation on a codeword $c$ of minimum weight  yields that 
$d(C(PG(n-1,q))^\bot)\leq d(C(PG(n,q))^\bot)$. Continuing this process proves the statement.
\end{proof}

\begin{remark} We call the vector $c'$ defined in the proof of Lemma \ref{min}, the projection of $c$.
\end{remark}
\begin{theorem}\label{st1} For each $n\geq3$, $d(C(PG(n,q))^\bot)=d(C(PG(2,q))^\bot)$.
\end{theorem}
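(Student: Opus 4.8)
The plan is to prove the two inequalities separately. Lemma \ref{min} already supplies the inequality $d(C(PG(n,q))^\bot)\geq d(C(PG(2,q))^\bot)$, via the projection construction, so all the work lies in the reverse direction $d(C(PG(n,q))^\bot)\leq d(C(PG(2,q))^\bot)$. Whereas Lemma \ref{min} \emph{pushes} codewords down in dimension by projection, here I would instead \emph{lift} a minimum weight codeword of the plane up into $PG(n,q)$ by the simplest possible device: extension by zero. Concretely, fix a plane $\pi$ in $PG(n,q)$, let $c$ be a codeword of minimum weight in $C(\pi)^\bot=C(PG(2,q))^\bot$, and define $\hat c$ on $PG(n,q)$ by $\hat c_P=c_P$ for $P\in\pi$ and $\hat c_P=0$ otherwise. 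Then $\hat c\neq 0$ and $wt(\hat c)=wt(c)$, so it suffices to check that $\hat c\in C(PG(n,q))^\bot$, i.e.\ that $(\hat c,H)=0$ for every hyperplane $H$ of $PG(n,q)$.

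For the verification I would argue by cases on the position of $H$ relative to $\pi$, since $\hat c$ is supported inside $\pi$ and hence $(\hat c,H)=\sum_{P\in H\cap\pi}c_P$. If $\pi\nsubseteq H$, then $H\cap\pi$ is a line $\ell$ of $\pi$, so $(\hat c,H)=(c,\ell)=0$ because $c\in C(\pi)^\bot$. If $\pi\subseteq H$, then $(\hat c,H)=\sum_{P\in\pi}c_P$, and I claim this total sum vanishes. To see it, fix a point $Q\in\pi$ and sum $(c,\ell)=0$ over the $q+1$ lines $\ell$ of $\pi$ through $Q$: each point of $\pi\setminus\{Q\}$ is counted once and $Q$ is counted $q+1$ times, giving $0=\sum_{\ell\ni Q}(c,\ell)=q\,c_Q+\sum_{P\in\pi}c_P$. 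Since $q=p^h\equiv 0\pmod p$, the term $q\,c_Q$ disappears in $\F_p$ and we obtain $\sum_{P\in\pi}c_P=0$, as needed. This establishes $\hat c\in C(PG(n,q))^\bot$, whence $d(C(PG(n,q))^\bot)\leq wt(\hat c)=d(C(PG(2,q))^\bot)$, and combining with Lemma \ref{min} yields the desired equality.

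I expect the case $\pi\subseteq H$ to be the only genuinely non-trivial point, and within it the crux is the observation that every dual codeword in a projective plane of order $q=p^h$ has zero coordinate sum over $\F_p$; this is exactly where the characteristic hypothesis is used, and it is what makes the zero-extension land in the dual code rather than merely satisfying the tangent/secant line conditions inside $\pi$. Everything else is a routine incidence count. It is worth remarking that this argument in fact shows more than the statement of the theorem: the zero-extension embeds $C(PG(2,q))^\bot$, viewed on the points of any fixed plane $\pi$, as a subspace of $C(PG(n,q))^\bot$ isometrically, so every codeword of the planar dual code — not just a minimum weight one — lifts without change of weight, which is the feature that forces the minimum distances to coincide.
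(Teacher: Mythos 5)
Your proof is correct and follows essentially the same route as the paper: zero-extension of a minimum weight planar dual codeword, a case split on whether the hyperplane contains $\pi$, and Lemma \ref{min} for the reverse inequality. The only cosmetic difference is that the paper justifies $\sum_{P\in\pi}c_P=0$ by noting that the all-one vector lies in $C(PG(2,q))$, whereas you re-derive that fact by summing $(c,\ell)$ over the pencil of lines through a point — the same computation in disguise.
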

\begin{proof}
Embed $\pi=PG(2,q)$ in $PG(n,q)$, $n>2$, and extend each codeword $c$ of $C(\pi)^\bot$ to a vector $c^{(n)}$ of $V(\theta_n,p)$ by putting a zero at each point $P\in PG(n,q)\backslash \pi$. Since the all one vector of $V(\theta_2,p)$ is a codeword of $C(PG(2,q))$, it follows that $\sum_{P\in \pi} c_P^{(n)}=0$ for each $c^{(n)}$. 

This implies that $(c^{(n)},H)=0$, for each hyperplane $H$ of $PG(n,q)$ which contains $\pi$. If a hyperplane $H$ of $PG(n,q)$ does not contain $\pi$, then $(c^{(n)},H)=(c,H\cap \pi)=0$, since $(c,l)=0$, for each line $l$ of $\pi$.

 It follows that $c^{(n)}$ is a codeword of $C(PG(n,q))^\bot$ of weight equal to the weight of $c$, which implies that $d(C(PG(n,q))^\bot)\leq d(C(PG(2,q))^\bot)$. Regarding Lemma \ref{min}, this yields that $d(C(PG(n,q))^\bot)=d(C(PG(2,q))^\bot)$.
\end{proof}

\begin{lemma} \label{lem1}
Let $B$ be a set in $PG(n,q)$, with $\dim\langle B\rangle \geq 3$, such that any point $R$ in $PG(n,q)\backslash B$ that lies on at least one secant line to $B$, does not lie on tangent lines to $B$. Then $\vert B \vert \geq 3q$.
\end{lemma}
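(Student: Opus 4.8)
The plan is to split on whether $B$ possesses a tangent line, after first recording the dichotomy forced by the hypothesis. Since every point $R\in PG(n,q)\setminus B$ joins to each point of $B$ by a line that is secant or tangent, the hypothesis partitions the external points into \emph{secant points} (lying on a secant, hence on no tangent, so every line through them is secant or skew) and \emph{tangent points} (lying on a tangent, hence on no secant, so the lines joining them to the points of $B$ are pairwise distinct tangents); note also that a line joining a tangent point to a secant point must be skew to $B$. I would first reduce to the subspace $\Sigma=\langle B\rangle=PG(m,q)$ with $m\geq 3$: every secant lies in $\Sigma$, and any tangent through a point of $\Sigma$ either lies in $\Sigma$ or meets $\Sigma$ only in that (non-$B$) point and is then skew, so the hypothesis is really a statement inside $\Sigma$, where $B$ spans.

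If $B$ has no tangent line in $\Sigma$, the bound is immediate and in fact far stronger than $3q$. Fix $P\in B$; every line through $P$ meets $B$ in at least one point ($P$ itself) and never in exactly one, hence in at least two. The $\theta_{m-1}$ lines through $P$ in $\Sigma$ meet pairwise only in $P$, so they carry pairwise distinct further points of $B$, giving $|B|\geq \theta_{m-1}+1\geq q^{2}+q+2>3q$ because $m\geq 3$. (This case also absorbs $B=\Sigma$ and affine-type configurations.)

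The substantial case is when a tangent line $t$ exists, say tangent at $P_{0}$, and here the plan is to show this case cannot actually occur, which again forces $|B|$ into the previous regime. Every point of $t\setminus\{P_{0}\}$ is a tangent point and therefore lies on no secant: otherwise such a point would sit simultaneously on a secant and on $t$, contradicting the hypothesis. Combining this with the fact that the remaining points of any secant $\ell\not\subseteq B$ are secant points yields, for every tangent point $R$ and every such $\ell$ with $R\notin\ell$, the identity $B\cap\langle R,\ell\rangle=B\cap\ell$ (points of $\ell\cap B$ are seen from $R$ along tangents, points of $\ell\setminus B$ along skew lines). Consequently every plane through a tangent point meets $B$ in a collinear set. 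Since $B$ is \emph{not} a subspace (a spanning subspace would be $\Sigma$ itself, which has no tangent and so falls under the previous paragraph), there is a secant $m\not\subseteq B$ and hence a secant point $S$; the target is to use $\dim\langle B\rangle\geq 3$ to produce a point $W\in B$ off the plane $\langle t,S\rangle$, and then to contradict that $S$ is a secant point by exhibiting a tangent through $S$ to some point of $B$.

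The main obstacle is exactly this last step: converting the local statement \emph{``planes through a tangent point meet $B$ in a line''} into a global contradiction in dimension $\geq 3$, and in particular handling the subcase in which $B$ contains full lines (where the clean plane-section identity above must be argued more carefully). The guiding calibration is that the known small configurations satisfying the hypothesis in dimension $\geq 3$ — full subspaces and affine-type sets — all have size of order $\theta_{m-1}$ or larger, so no extremal example sits near $3q$; thus $3q$ is a deliberately weak, clean target, and the real work is showing that a tangent and a proper secant cannot coexist under the hypothesis when $\dim\langle B\rangle\geq 3$.
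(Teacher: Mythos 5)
Your Case 1 (no tangent lines inside $\langle B\rangle$) is fine and even gives the much stronger bound $\vert B\vert\geq\theta_{m-1}+1$. The problem is that your Case 2 --- deriving a contradiction from the coexistence of a tangent line and a proper secant when $\dim\langle B\rangle\geq 3$ --- is exactly where the whole content of the lemma sits in your decomposition, and you have not carried it out; you say so yourself (``the main obstacle is exactly this last step''). The local statement you do establish (every plane through a tangent point $R$ meets $B$ in a collinear set) is correct, including in the presence of full lines, since every line joining $R$ to a point of $B$ is a tangent and hence carries exactly one point of $B$; but turning it into a global contradiction requires a genuine further argument (for instance: taking four points of $B$ spanning a $3$-space $\Delta$ with $R\in\Delta$, the point $X=\langle R,P_1,P_2\rangle\cap P_3P_4$ is an external point on the secant $P_3P_4$ and on the tangent $XP_1$; but one must still handle the case $R\notin\Delta$, where such a plane and line need not meet, and this takes a nontrivial case analysis). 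As submitted, the proof is incomplete.

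It is also worth noting that the paper's proof sidesteps your dichotomy entirely and is much shorter: it takes a point $R\notin B$ on a secant (so that, by hypothesis, \emph{every} line through $R$ meeting $B$ is a secant), observes that $\dim\langle B\rangle\geq 3$ forces three non-coplanar secants $l_1,l_2,l_3$ through $R$, shows that each plane $\langle l_i,l_j\rangle$ contains at least $q+\max\{a_i,a_j\}$ points of $B$ (because every line of that plane through a point $S\in B\cap l_i\setminus l_j$ must be a secant, as a tangent would meet $l_j$ in a forbidden point), and concludes by inclusion--exclusion that $\vert B\vert\geq 3q$. No structural claim about tangent lines being impossible is needed. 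If you want to salvage your route, you must either complete the Case 2 contradiction in full generality or switch to a direct counting argument of this kind.
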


\begin{proof}
We first prove the following result. When we take two secants $l_1, l_2$ through $R$, then the plane $\langle l_1,l_2 \rangle$ contains at least $q+ \max \lbrace a_1,a_2 \rbrace$ points of $B$, where $a_i=\vert l_i \cap B \vert$. 
Take a point $S\in B$ on $l_1\backslash l_2$. Then every line in $\langle l_1,l_2 \rangle$ through $S$ must be a secant line to $B$; else if it lies on a tangent line $l$, $l\cap l_2$ is a point not in $B$ lying on a tangent line and a secant line to $B$, which is a contradiction. So $\vert B \cap \langle l_1,l_2 \rangle \vert \geq q+a_1$, and similarly, $\vert B \cap \langle l_1,l_2 \rangle\vert \geq q+a_2$.

Now $R$ lies on at least three non-coplanar secants to $B$, since $\dim \langle B \rangle \geq 3$. Now $$\vert \langle l_1,l_2 \rangle \cap B \vert \geq q+\max\lbrace a_1,a_2 \rbrace,$$
$$\vert \langle l_1,l_3 \rangle \cap B \vert \geq q+\max\lbrace a_1,a_3 \rbrace,$$
$$\vert \langle l_2,l_3 \rangle \cap B \vert \geq q+\max\lbrace a_2,a_3 \rbrace,$$
with $a_i =|l_i\cap B|$.

So $\vert B \vert \geq (q+\max\lbrace a_1,a_2 \rbrace)+ (q+\max\lbrace a_1,a_3 \rbrace)+(q+\max\lbrace a_2, a_3 \rbrace)$ $-(a_1+a_2+a_3)$, because we counted the points lying on $l_i\cap B$ twice. It follows that $\vert B \vert \geq 3q$.
\end{proof}
\begin{theorem} \label{th3}
Let $c$ be a codeword of $C(PG(n,q))^\bot$, $n\geq 3$, of minimal weight, then  $supp(c)$ is contained in a plane of $PG(n,q)$.
\end{theorem}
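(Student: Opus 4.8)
The plan is to argue by contradiction: assume $\dim\langle supp(c)\rangle \geq 3$ and show that this forces $wt(c) = |supp(c)| \geq 3q$, which is impossible since we already know $d(C(PG(n,q))^\bot) \leq 2q$ (the difference of two intersecting lines). The tool that produces the bound $3q$ is Lemma \ref{lem1}, applied to $B = supp(c)$. So the whole proof reduces to verifying that $B = supp(c)$ satisfies the hypothesis of Lemma \ref{lem1}, namely that every point $R \in PG(n,q)\setminus B$ which lies on a secant line to $B$ lies on no tangent line to $B$.

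I would establish this hypothesis in its contrapositive form, using the projection machinery of Lemma \ref{min} together with the dimension-reduction equality of Theorem \ref{st1}. Fix a point $R \notin supp(c)$ lying on a \emph{tangent} line to $supp(c)$, and let $c'$ be the projection of $c$ from $R$ onto a hyperplane $H \cong PG(n-1,q)$ with $R \notin H$. As in Lemma \ref{min}, $c'$ is a nonzero codeword of $C(PG(n-1,q))^\bot$ with $|supp(c')| \leq |supp(c)|$. Since $c$ has minimal weight, Theorem \ref{st1} gives
\[
|supp(c')| \geq d(C(PG(n-1,q))^\bot) = d(C(PG(n,q))^\bot) = |supp(c)|,
\]
so in fact $|supp(c')| = |supp(c)|$. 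The coordinate $c'_P$ is the sum of the $c$-values on the line $\langle R,P\rangle$, so the number of nonzero coordinates of $c'$ is at most the number of lines through $R$ meeting $supp(c)$, with equality only if no cancellation occurs; and that number of lines is in turn at most $|supp(c)|$, with equality exactly when every line through $R$ meets $supp(c)$ in at most one point. The equality $|supp(c')| = |supp(c)|$ therefore forces every line through $R$ that meets $supp(c)$ to be tangent, i.e. $R$ lies on no secant line to $supp(c)$. This is precisely the contrapositive of the required property.

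With the hypothesis of Lemma \ref{lem1} verified, the endgame is immediate: if $supp(c)$ were not contained in a plane, then $\dim\langle supp(c)\rangle \geq 3$ and Lemma \ref{lem1} would yield $wt(c) = |supp(c)| \geq 3q$, contradicting $wt(c) = d(C(PG(n,q))^\bot) \leq 2q$. Hence $\dim\langle supp(c)\rangle \leq 2$, i.e. $supp(c)$ lies in a plane of $PG(n,q)$.

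The step I expect to be the main obstacle is the careful bookkeeping in the weight-preservation argument of the second paragraph: one must control \emph{two} distinct ways the support can shrink under projection — several support points collapsing onto one line through $R$, and distinct support points whose $c$-values cancel modulo $p$ — and confirm that forbidding weight loss forces \emph{tangency} of every secant-candidate through $R$, not merely injectivity of the projection map. Everything else (the contradiction with the $2q$ bound, and the invocation of Lemma \ref{lem1}) is then routine.
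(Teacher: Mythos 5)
Your proof is correct and is essentially the paper's own argument: both rest on Lemma \ref{lem1}, the projection of Lemma \ref{min}, the equality of minimum weights from Theorem \ref{st1}, and the $2q$ upper bound from two intersecting lines. The only difference is the arrangement of the logic --- you verify the hypothesis of Lemma \ref{lem1} directly (showing every point on a tangent lies on no secant via weight preservation under projection), whereas the paper uses the failure of Lemma \ref{lem1}'s conclusion to produce a point on both a secant and a tangent and then derives the contradiction by a strictly weight-decreasing projection; these are contrapositive rearrangements of the same proof.
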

\begin{proof}The difference of two intersecting lines clearly belongs to the dual code and has weight $2q$, so we may assume that $wt(c) \leq 2q$.

Assume that $\dim\langle supp(c) \rangle \geq 3$; using Lemma \ref{lem1}, we find a point $R$ lying on a tangent line to $supp(c)$ and lying on at least one secant line to $supp(c)$. It follows from Theorem \ref{st1} that $wt(c)=d(C(PG(n,q))^\bot)=d(C(PG(n-1,q))^\bot)=d(C(PG(2,q))^\bot).$
 
Since $R$ lies on at least one secant line and at least one tangent line to $supp(c)$, the projection $c'$, of $c$ from $R$,  has weight smaller than $wt(c)$.

But then $c'$ is a non-zero codeword of $C(PG(n-1,q))^\bot$ satisfying $0<wt(c')\leq wt(c)-1<d(C(PG(n-1,q))^\bot)$, a contradiction.
\end{proof}

In Theorem \ref{th3}, we reduced the problem of finding the minimum weight of the dual of the code generated by points and hyperplanes in $PG(n,q)$ to finding the minimum weight of the dual of the code generated by points and lines in $PG(2,q)$. This means that we can use the known results about this latter code.

From \cite[Theorem 6.4.2]{AK}, we get the following bound on the minimum weight $d$ of $C(PG(2,q))^\bot$, with $q=p^h, p$ prime, $h\geq 1$: 
$$q+p\leq d \leq 2q,$$
with equality at the lower bound for $p=2$.

 Using this bound, together with Theorem \ref{th3}, yields the following three theorems. 
\begin{theorem}\label{priem}
The minimum weight of $C(PG(n,p))^\bot$, $p$ prime, is equal to $2p$.
\end{theorem}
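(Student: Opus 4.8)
The plan is to combine the dimension-reduction result of Theorem \ref{st1} with the known bounds on the minimum weight of the dual code in the projective plane. Since Theorem \ref{st1} gives $d(C(PG(n,p))^\bot)=d(C(PG(2,p))^\bot)$ for all $n\geq 3$, it suffices to determine the minimum weight of $C(PG(2,p))^\bot$ when $q=p$ is prime, and then this value propagates to all dimensions. For the base case $n=2$ the upper bound is immediate: the difference of the incidence vectors of two distinct (necessarily intersecting) lines of $PG(2,p)$ is a codeword of $C(PG(2,p))^\bot$ of weight exactly $2p$, as already noted in the paragraph preceding Lemma \ref{min}. So the entire content of the theorem is the matching lower bound $d(C(PG(2,p))^\bot)\geq 2p$.

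For the lower bound I would invoke the cited bound from \cite[Theorem 6.4.2]{AK}, namely $q+p\leq d\leq 2q$ for $C(PG(2,q))^\bot$. Specializing to $q=p$ prime gives $p+p\leq d\leq 2p$, i.e. $2p\leq d(C(PG(2,p))^\bot)\leq 2p$, forcing equality $d(C(PG(2,p))^\bot)=2p$. This is the clean way to finish, since the lower bound $q+p$ coincides with the upper bound $2q$ precisely when $q=p$. First I would state this specialization, then immediately conclude $d(C(PG(2,p))^\bot)=2p$, and finally apply Theorem \ref{st1} to transfer the conclusion to arbitrary $n\geq 2$.

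The only subtlety is that Theorem \ref{st1} is stated for $n\geq 3$, while the $n=2$ case is exactly the base value we are computing; I would handle this by first establishing the $n=2$ case directly from the \cite{AK} bound and then remarking that Theorem \ref{st1} extends the equality to all $n\geq 3$. There is no genuine obstacle here: the heavy lifting (the reduction to the planar case) has already been done in Theorems \ref{st1} and \ref{th3}, and the remaining step is the observation that the general lower bound $q+p$ from \cite{AK} becomes tight in the prime case. The proof is therefore essentially a two-line deduction: coincidence of the \cite{AK} bounds at $q=p$, followed by the reduction theorem.

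\begin{proof}
By Theorem \ref{st1}, it suffices to compute $d(C(PG(2,p))^\bot)$. The bound from \cite[Theorem 6.4.2]{AK}, specialized to $q=p$, reads $p+p\leq d(C(PG(2,p))^\bot)\leq 2p$, so that $d(C(PG(2,p))^\bot)=2p$. Applying Theorem \ref{st1} gives $d(C(PG(n,p))^\bot)=d(C(PG(2,p))^\bot)=2p$ for all $n\geq 2$.
\end{proof}
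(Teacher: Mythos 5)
Your proof is correct and follows essentially the same route as the paper, which likewise derives these theorems by combining the planar reduction (Theorems \ref{st1} and \ref{th3}) with the bound $q+p\leq d\leq 2q$ from \cite[Theorem 6.4.2]{AK}, which collapses to equality when $q=p$.
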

\begin{theorem}
The minimum weight of $C(PG(n,2^h))^\bot$ is equal to $2^h+2$.
\end{theorem}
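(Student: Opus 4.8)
The plan is to derive this as an immediate consequence of the reduction established in Theorem \ref{st1}, exactly parallel to the proof of Theorem \ref{priem}. The whole point of the second section is that the minimum weight of the dual code in arbitrary dimension collapses onto the planar case, so the only work remaining is to read off the correct planar value when $q=2^h$.

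Concretely, I would first invoke Theorem \ref{st1} to write $d(C(PG(n,2^h))^\bot)=d(C(PG(2,2^h))^\bot)$ for every $n\geq 3$; the cases $n=1,2$ need no reduction. Then I would apply the cited bound from \cite[Theorem 6.4.2]{AK}, namely $q+p\leq d(C(PG(2,q))^\bot)\leq 2q$, specialised to $q=2^h$ and $p=2$. The crucial input is the remark accompanying that bound that the \emph{lower} bound is attained when $p=2$; this forces $d(C(PG(2,2^h))^\bot)=q+p=2^h+2$. Combining the two displayed equalities yields $d(C(PG(n,2^h))^\bot)=2^h+2$, which is the claim.

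There is essentially no obstacle beyond correctly citing the two ingredients: the dimension-reduction Theorem \ref{st1} and the sharpness of the planar lower bound for $p=2$. If one wanted a self-contained argument rather than quoting the sharpness from \cite{AK}, the only nontrivial step would be exhibiting a weight-$(2^h+2)$ codeword of $C(PG(2,2^h))^\bot$ meeting the lower bound; over $\mathbb{F}_2$ such a codeword is the sum of the incidence vectors of a suitable set of lines forming a small even-intersection configuration (for instance a hyperoval-type or dual-arc construction), but since the excerpt already grants the attainment of the lower bound for $p=2$, I would simply cite it and avoid reproducing that construction.
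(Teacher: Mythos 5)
Your proposal is correct and follows essentially the same route as the paper: the authors likewise combine the reduction to the planar case (Theorem \ref{st1}, via Theorem \ref{th3}) with the bound $q+p\leq d\leq 2q$ from \cite[Theorem 6.4.2]{AK} and its stated sharpness at the lower bound for $p=2$. Nothing further is needed.
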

\begin{theorem}\label{AK}If $d$ is the minimum weight of $C(PG(n,q))^\bot$, $q=p^h$, $p$ prime, then
$$q+p\leq d \leq 2q.$$
\end{theorem}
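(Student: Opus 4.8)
The plan is to transport the question to the projective plane and then quote the known planar bound. The key structural fact is already available: Theorem \ref{st1} gives $d(C(PG(n,q))^\bot)=d(C(PG(2,q))^\bot)$ for every $n\geq 3$, and for $n=2$ the two codes coincide, so the equality $d=d(C(PG(2,q))^\bot)$ holds in all dimensions $n\geq 2$. Once the problem is reduced to the plane, the bound to be proved is precisely the one recorded just before the statement, coming from \cite[Theorem 6.4.2]{AK}: for $q=p^h$, $p$ prime, one has $q+p\leq d(C(PG(2,q))^\bot)\leq 2q$. Combining these two facts yields $q+p\leq d\leq 2q$ immediately.

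For the upper bound I would in fact prefer to give the witness directly in $PG(n,q)$, avoiding the reduction altogether: by Lemma \ref{1}, the difference $l_1-l_2$ of the incidence vectors of two lines $l_1,l_2$ of $PG(n,q)$ is a codeword of $C(PG(n,q))^\bot$, and if the two lines meet in a single point this codeword has weight exactly $2q$. Hence $d\leq 2q$ with no appeal to the planar code. It is the lower bound $q+p\leq d$ that carries the content, and here the transport through the equality of the previous paragraph, combined with the Assmus--Key estimate, is exactly what is needed.

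I do not expect any genuine obstacle to remain. All the real work has already been done upstream --- in the projection argument of Lemma \ref{min} and Theorem \ref{th3}, which forces a minimum-weight codeword to live in a plane, and in the resulting dimension reduction of Theorem \ref{st1}. The final step is therefore a one-line combination of that reduction with the cited planar bound; the only mild care needed is to note that the lower bound in \cite[Theorem 6.4.2]{AK} is stated for $C(PG(2,q))^\bot$, so the full chain $d(C(PG(n,q))^\bot)=\cdots=d(C(PG(2,q))^\bot)$ furnished by Lemma \ref{min} and Theorem \ref{st1} must be invoked before it can be applied.
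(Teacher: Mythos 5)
Your proposal is correct and follows essentially the same route as the paper: the theorem is obtained by combining the reduction $d(C(PG(n,q))^\bot)=d(C(PG(2,q))^\bot)$ from Theorem \ref{st1} with the planar bound $q+p\leq d(C(PG(2,q))^\bot)\leq 2q$ of \cite[Theorem 6.4.2]{AK}. Your direct witness for the upper bound (the difference of two intersecting lines, of weight $2q$) is also exactly the observation the paper makes just before Lemma \ref{min}.
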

We conclude this manuscript by improving on Theorem \ref{AK}. We summarize the improved bounds on the minimum weight of $C(PG(n,q))^\bot$  in  Table 1 at the end of this section.

In Theorem \ref{th11}, it was proven that the minimum weight of $C\cap C^\bot$ is equal to $2q^{n-1}$. We now show that the minimum weight of $C^\bot$ is smaller than $2q$ under certain conditions.
\begin{theorem} \label{st}
Let $B$ be a minimal blocking set in $PG(2,q)$ of size $q+k$, with $k<(q+3)/2$, of R\'edei-type (i.e. there exists a $k$-secant $L$). Then the difference of the incidence vectors of $B$ and $L$ is a codeword of $C(PG(2,q))^\bot$ with weight $2q+1-k$.
\end{theorem}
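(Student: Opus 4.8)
The plan is to establish the two assertions separately: that the support of the vector $B-L$ has size exactly $2q+1-k$, and that $B-L$ lies in $C(PG(2,q))^\bot$. Since $C(PG(2,q))$ is the span of the incidence vectors of the lines of $PG(2,q)$, the second assertion is equivalent to $(B-L,m)=0$ over $\F_p$ for every line $m$.

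First I would compute the weight by direct counting. The vector $B-L$ equals $+1$ on each point of $B\setminus L$, equals $-1$ on each point of $L\setminus B$, and vanishes elsewhere, since the $k$ points of $B\cap L$ contribute $1-1=0$. Using $|B|=q+k$, $|L|=q+1$, and $|B\cap L|=k$ (as $L$ is a $k$-secant), we get $|B\setminus L|=q$ and $|L\setminus B|=q+1-k$, so $wt(B-L)=q+(q+1-k)=2q+1-k$. This step is routine.

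The substantive step is the membership in $C^\bot$, where $(B-L,m)\equiv |B\cap m|-|L\cap m|\pmod p$. The key input is the classical fact (due to Sz\H{o}nyi) that a \emph{small} minimal blocking set of $PG(2,q)$ meets every line in $1\pmod p$ points. The hypothesis $k<(q+3)/2$ gives $|B|=q+k<3(q+1)/2$, so $B$ falls in this small range and every line meets $B$ in $1\pmod p$ points. Granting this, for a line $m\neq L$ we have $|L\cap m|=1$ and $|B\cap m|\equiv 1\pmod p$, whence $(B-L,m)\equiv 0$; and for $m=L$ we have $|L\cap L|=q+1\equiv 1\pmod p$ together with $|B\cap L|=k\equiv 1\pmod p$ (the congruence applied to $L$ itself), so again $(B-L,L)\equiv 0$. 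Hence $B-L$ is orthogonal to every line and lies in $C(PG(2,q))^\bot$.

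The main obstacle is precisely the $1\pmod p$ intersection property: everything else reduces to elementary counting modulo $p$. I expect to have to invoke this structural fact (or, in the R\'edei-type situation specifically, reprove it via the R\'edei polynomial and the associated direction results). It is worth noting that the R\'edei hypothesis is used only to produce the $k$-secant $L$, which fixes $|B\cap L|=k$ and hence the weight $2q+1-k$; it is minimality together with smallness that forces the congruences making $B-L$ a dual codeword.
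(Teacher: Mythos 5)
Your proposal is correct and follows essentially the same route as the paper: the hypothesis $k<(q+3)/2$ makes $B$ a small minimal blocking set, so by Sz\H{o}nyi's result every line meets $B$ in $1\pmod p$ points, whence $(B-L,m)\equiv 0$ for every line $m$, and the weight count $|B\setminus L|+|L\setminus B|=2q+1-k$ is the same elementary computation. Your write-up merely spells out the case $m=L$ and the congruence $k\equiv 1\pmod p$ more explicitly than the paper does.
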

\begin{proof}
If $k<(q+3)/2$, then $B$ is a small minimal blocking set, hence every line intersects $B$ in $1\pmod{p}$ points (see \cite{SZ}). Let $c_1$ be the incidence vector of $B$ and let $c_2$ be the incidence vector of $L$. Then $(c_1-c_2,m)=(c_1,m)-(c_2,m)=0$ for all lines $m$, hence $c_1-c_2$ is a codeword of $C(PG(2,q))^\bot$, with weight $2q+1-k$.
\end{proof}
We can use this theorem to lower the upper bound on the possible minimum weight of codewords of $C(PG(2,q))^\bot$.
Let $q=p^h$, let $e$ be a divisor of $h$ with $1<e<h$, then we have the following linear blocking set
$$B=\left\lbrace (1,x,x^{p^e})\vert \vert x \in \mathbb{F}_{p^h} \right \rbrace \cup \left\lbrace (0,x,x^{p^e})\vert \vert x \in \mathbb{F}_{p^h},x\neq 0 \right \rbrace.$$

The size of such a blocking set is $q+\frac{q-1}{p^e-1}$. The second part belongs to  a line $L$ which is a $\frac{q-1}{p^e-1}$-secant, so the weight of the codeword arising from the difference of the incidence vectors of $B$ and $L$ is equal to $2q+1-\frac{q-1}{p^e-1}$.

\begin{corollary}
For $q=p^h$, $p$ prime, $h\geq 1$, $d(C(PG(2,q))^\perp) \leq 2q+1-(q-1)/(p-1)$.
\end{corollary}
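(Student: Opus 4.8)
The plan is to read off the corollary from Theorem~\ref{st}, applied to the explicit R\'edei-type blocking set displayed just above but pushed to its extreme parameter. Concretely, I would take the member of that family corresponding to $e=1$ (a legitimate divisor of $h$, merely excluded from the range $1<e<h$ used for the generic non-trivial example), namely
\[
B=\left\lbrace (1,x,x^{p})\mid x \in \mathbb{F}_{q} \right \rbrace \cup \left\lbrace (0,x,x^{p})\mid x \in \mathbb{F}_{q},\, x\neq 0 \right \rbrace ,
\]
the R\'edei blocking set determined by the graph of the additive map $x\mapsto x^{p}$. Writing a point $(0,x,x^{p})$ of the second part projectively as $(0,1,x^{p-1})$, the coordinates $x^{p-1}$ sweep out exactly the subgroup of $(p-1)$-th powers of $\mathbb{F}_{q}^{\ast}$, of order $(q-1)/(p-1)$; hence the line $L\colon X_{0}=0$ meets $B$ in precisely $k=(q-1)/(p-1)$ points and $|B|=q+k$. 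Thus $B$ is a minimal blocking set of R\'edei type whose R\'edei $k$-secant is $L$, which is exactly the input required by Theorem~\ref{st}.

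For odd $p$ the verification is then immediate. The size hypothesis $k<(q+3)/2$ holds because $k=(q-1)/(p-1)\le (q-1)/2<(q+3)/2$, so $B$ is a small minimal blocking set and Theorem~\ref{st} applies. It yields the codeword $c_{1}-c_{2}\in C(PG(2,q))^{\perp}$, with $c_{1},c_{2}$ the incidence vectors of $B$ and $L$, of weight $2q+1-k=2q+1-(q-1)/(p-1)$. This is precisely the claimed upper bound on $d(C(PG(2,q))^{\perp})$, and the argument works uniformly for every $h\ge 1$ (for $h=1$ it degenerates to a line and a tangent, returning the trivial bound $2q$).

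The one genuine obstacle is the prime $p=2$, where the construction fails: here $k=q-1$, so $|B|=2q-1$ is no longer small and the hypothesis $k<(q+3)/2$ breaks down once $q\ge 8$, putting $B$ outside the reach of Theorem~\ref{st}. I would settle this case without any new blocking set, by noting that for $p=2$ the claimed bound reads $2q+1-(q-1)=q+2$, which coincides with the value already established for the minimum weight of $C(PG(n,2^{h}))^{\perp}$; hence the inequality holds there with equality. Combining the odd-$p$ argument with this remark gives the corollary for all $q=p^{h}$.
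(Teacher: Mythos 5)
Your proof is correct and follows essentially the same route as the paper: apply Theorem~\ref{st} to the linear R\'edei blocking set defined by $x\mapsto x^{p^e}$, taking $e=1$ to get the largest possible $k=(q-1)/(p-1)$. You are in fact more careful than the paper, which displays the family only for $1<e<h$ and does not comment on the hypothesis $k<(q+3)/2$ or on $p=2$; your separate treatment of $p=2$ via the known value $q+2$ correctly patches the only case where that hypothesis fails.
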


\begin{remark} In \cite[p. 130]{Bagchi}, the authors write that they have no examples of codewords of $C^\bot$ with weight smaller than $2q$, where $q$ is odd. Theorem \ref{st} provides numerous examples of such codewords for even and odd $q$.
\end{remark}

The following result of Sachar \cite{sachar} states a lower bound on the minimum weight of $C^\bot$.

Let $\Pi$ be a, not necessarily Desarguesian, projective plane of order $n$. Let $C_p(\Pi)$ denote the $p$-ary code of points and lines of $\Pi$, with $p\vert n$.
\begin{theorem}\cite{sachar} \label{th4'}
Let $c$ be a codeword of minimum weight of $C_p(\Pi)^\bot$, and suppose that $p\nmid wt(c)$. If $p=5$, then $wt(c)\geq 4(2n+3)/5$, and if $p> 5$, then $wt(c)\geq (12n+18)/7$.
\end{theorem}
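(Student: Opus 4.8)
The plan is to extract a lower bound on $wt(c)=w$ from the interplay between a purely combinatorial count of short secants and the $\F_p$-linear balancing condition, reserving the divisibility hypothesis for the boundary case. First I would record the basic structure of a dual codeword. Writing $S=supp(c)$, the equation $\sum_{P\in\ell}c_P=0$ forces every line to meet $S$ in $0$ or in at least $2$ points (there are no tangent lines). Summing this relation over all $n+1$ lines through a fixed point $Q$ gives $n\,c_Q+\sum_P c_P=0$, so the hypothesis $p\mid n$ yields $\sum_P c_P=0$. Since $p\geq 5$ is odd, $2a\neq 0$ for every $a\in\F_p^\star$; this simple fact is the engine of the argument.

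The first, easy, step is a local count at a point. Fix $P_0\in S$ with $c_{P_0}=a$. Each of the $n+1$ lines through $P_0$ is a secant, so if $m$ of them are exactly $2$-secants and the remaining $n+1-m$ meet $S$ in at least $3$ points, then counting the $w-1$ points of $S\setminus\{P_0\}$ gives $w-1\geq m+2(n+1-m)$, that is,
$$m\ \geq\ 2n+3-w.$$
Moreover, the second points of the $2$-secants through $P_0$ are pairwise distinct and, by balancing, all carry the value $-a$; call this monochromatic set $T$, so $|T|=m$.

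The decisive, and much harder, step is a matching \emph{upper} bound $m\leq w/6$ when $p>5$, and $m\leq w/4$ when $p=5$. Combined with the displayed inequality this yields $2n+3\leq w(1+\tfrac16)$ (respectively $w(1+\tfrac14)$), i.e.\ exactly $w\geq(12n+18)/7$ and $w\geq 4(2n+3)/5$. To obtain such a bound I would exploit that $T$ is monochromatic: for $Q,Q'\in T$ the line $QQ'$ avoids $P_0$ and carries value-sum $-2a\neq 0$ on its two known points, so it must contain further points of $S$ balancing the sum to $0$. Hence every chord of $T$ is a genuine ($\geq 3$)-secant demanding ``extra'' support points. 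I would then set up a weighted double count of the incidences between $S\setminus T$ and the chords of $T$, charging each chord its balancing cost and bounding how many chords can pass through a single balancing point, so that the $m$ points of $T$ force at least $5m$ (respectively $3m$) further points of $S$, whence $w\geq 6m$ (respectively $4m$). The split between $p=5$ and $p>5$ should enter here: the number of $\F_p$-patterns that can balance a chord using the fewest extra points is smaller when $p=5$, which changes the optimal charging and thus the constant.

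The hard part will be making this double count tight enough to reach the constants $6$ and $4$ rather than the weak value $\approx 2$ produced by the naive ``each chord needs one extra point'' estimate; this requires controlling collinearities inside $T$ (three collinear points of $T$ sum to $-3a$ and are costlier to balance, but lie on fewer chords, so a careful trade-off is needed) and invoking the minimality of $c$ to forbid degenerate reductions. Finally, I expect the divisibility hypothesis $p\nmid w$ to be used precisely to discard the extremal configuration in which all the above inequalities are simultaneously equalities: such a perfectly balanced support would force $w\equiv 0\pmod p$, so assuming $p\nmid w$ removes that boundary case and preserves the stated bound.
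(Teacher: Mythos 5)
Your setup is fine (every line meets $supp(c)$ in $0$ or at least $2$ points, the count $m\geq 2n+3-wt(c)$ of $2$-secants through a fixed $P_0$, and the fact that their second points form a set $T$ of $m$ points all carrying the value $-a$), and the arithmetic $m\geq 2n+3-w$, $m\leq w/6$ $\Rightarrow$ $w\geq (12n+18)/7$ does close correctly. But the entire weight of the proof rests on the matching upper bound $m\leq w/6$ (resp.\ $w/4$), and this is exactly the step you do not prove: you describe a ``weighted double count'' over the chords of $T$ and yourself concede that the naive version gives a constant near $2$ rather than $6$. Worse, the bound you need is simply false for natural configurations in this weight range. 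Take $c=\ell_1-\ell_2$, the difference of two lines, of weight $w=2q$: for $P_0\in\ell_1$ off $\ell_2$, every other line through $P_0$ is a $2$-secant, so $m=q=w/2$, and the set $T=\ell_2\setminus(\ell_1\cap\ell_2)$ is entirely collinear, so there is a single ``chord'' which needs no balancing points at all. Any argument for $m\leq w/6$ must therefore make essential use of the hypothesis $w<(12n+18)/7$ and must rule out large collinear pieces of $T$; you flag ``controlling collinearities inside $T$'' as an issue but give no mechanism, and this is precisely where the plan collapses. Your account of where $p\nmid wt(c)$ enters is likewise only a guess.

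For comparison, the argument the paper works with (following Sachar, and modified in Theorem \ref{th4b} to remove the divisibility condition) is organized around the number $2m$ of distinct nonzero symbols in $c$, not around $2$-secants through one point: Sachar's Lemma \ref{lem2} gives $wt(c)\geq n+\frac{2m-1}{2m+1}n+\frac{6m}{2m+1}$, which already yields $(12n+18)/7$ once there are at least six symbols; this reduces the problem to the cases of two or four symbols, and in the two-symbol case one counts the lines through a point of the majority class that avoid the minority class --- each such line carries $0\pmod p$, hence at least $p$, support points --- giving $wt(c)\geq (q+1-y)(p-1)+y+1$ and a contradiction for small weight. That route avoids any geometric control of the secant structure of a monochromatic set, which is what your sketch cannot supply.
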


We give a modification of the proof for the second part of Theorem \ref{th4'}, with a small change in the case $p=7$, which has as convenience that the condition $p\nmid wt(c)$ is not necessary.
\begin{remark} Let $c$ be a codeword of $C_p(\Pi)$, $p>2$, with $wt(c)\leq 2n+2$. Since through every point of $supp(c)$, there is a $2$-secant, it is easy to see that the number of distinct non-zero symbols used in $c$ must be even, and that the distinct non-zero symbols occuring in $c$ occur can be partitioned into pairs $\{a,-a\}$.
\end{remark}
In this modification of the proof, we use the following lemma of Sachar.
\begin{lemma} \cite[Proposition 2.2]{sachar}\label{lem2} Suppose that there are $2m$ different non-zero symbols used in the codeword $c\in C_p(\Pi)^\bot$, with $wt(c)\leq 2n+2$. Then $wt(c)\geq n+\frac{2m-1}{2m+1}n+\frac{6m}{2m+1}$.
\end{lemma}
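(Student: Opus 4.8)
The plan is to bound $w := wt(c) = |supp(c)|$ from below by a double-counting argument carried out at a single support point and then averaged over the $2m$ symbols. Write $S = supp(c)$ and, for each used symbol $v$, let $n_v$ denote the number of points $P$ with $c_P = v$, so that $w = \sum_v n_v$, the sum running over all $2m$ symbols. The two structural inputs I would use are: first, since $c \in C_p(\Pi)^\bot$ every line $\ell$ satisfies $\sum_{P \in \ell} c_P = 0$, so no line is tangent to $S$ and each line meets $S$ in $0$ or at least $2$ points; second, by the Remark preceding the lemma the symbols split into $m$ pairs $\{a,-a\}$, so $v \mapsto -v$ is a bijection of the symbol set.

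First I would fix a point $P \in S$ and set $a = c_P$. Grouping the remaining $w-1$ points of $S$ by the $n+1$ lines through $P$ gives the identity $\sum_{\ell \ni P}(|\ell \cap S| - 1) = w - 1$, in which every summand is at least $1$ because no line through $P$ is tangent. Calling a line through $P$ a \emph{$2$-secant} when $|\ell \cap S| = 2$ and writing $t_P$ for the number of such lines, the summands equal $1$ on the $2$-secants and are at least $2$ otherwise, so $w - 1 \ge t_P + 2(n+1-t_P) = 2(n+1) - t_P$, whence $t_P \ge 2n + 3 - w$ (this is positive precisely because $w \le 2n+2$). On the other hand, a $2$-secant through $P$ meets $S$ in $P$ and exactly one further point, whose symbol must be $-a$ for the line sum to vanish; distinct $2$-secants through $P$ produce distinct such points, so $t_P \le n_{-a}$. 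Combining the two estimates yields $n_{-a} \ge 2n + 3 - w$.

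Since a point of symbol $a$ exists for each of the $2m$ symbols and $v \mapsto -v$ permutes them, I would conclude that $n_v \ge 2n + 3 - w$ for \emph{every} symbol $v$. Summing over all $2m$ symbols gives $w = \sum_v n_v \ge 2m(2n + 3 - w)$, and solving for $w$ produces $w \ge \frac{2m(2n+3)}{2m+1}$, which rearranges to the claimed bound $n + \frac{2m-1}{2m+1}n + \frac{6m}{2m+1}$. The main obstacle, and the only place where the full hypothesis is felt, is the two-sided control of $t_P$: the lower bound relies on the tangent-free property together with $w \le 2n+2$ to force enough $2$-secants, while the upper bound $t_P \le n_{-a}$ is exactly where the pairing $\{a,-a\}$ is indispensable, since only points of symbol $-a$ can complete a minimal-length line through $P$. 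Because the final inequality is tight, neither estimate can be relaxed, so care is needed to ensure the $2$-secant count is used sharply.
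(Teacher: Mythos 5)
Your proof is correct and complete: the identity $n+\frac{2m-1}{2m+1}n+\frac{6m}{2m+1}=\frac{2m(2n+3)}{2m+1}$ checks out, the two-sided control $2n+3-wt(c)\le t_P\le n_{-c_P}$ is sound, and the closure of the used symbols under $v\mapsto -v$ (needed to pass from $n_{-a}$ to all $n_v$) is supplied by the preceding Remark and in fact also follows from your own bound $t_P\ge 1$. Note that the paper does not prove this lemma but cites it from Sachar; your argument is essentially the standard double count of $2$-secants underlying Sachar's Proposition 2.2, so there is no substantive divergence to report.
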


\begin{theorem}\label{th4b}
Let $c$ be a codeword of minimum weight of $C(PG(2,q))^\bot$, $q=p^h$, $p$ prime, $h\geq 1$. If $p=7$, then $wt(c)\geq (12q+6)/7$, and if $p>7$, then $wt(c)\geq (12q+18)/7$.
\end{theorem}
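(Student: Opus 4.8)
The plan is to start from a minimum-weight codeword $c$, for which $wt(c)\le 2q$ because the difference of two intersecting lines already lies in $C(PG(2,q))^\bot$, and then run a case analysis on the number of distinct non-zero symbols of $c$. By the Remark preceding Lemma \ref{lem2}, these symbols split into $m$ pairs $\{a_i,-a_i\}\subset\F_p^\star$, so in particular $m\le (p-1)/2$. If $m\ge 3$, Lemma \ref{lem2} directly gives
\[ wt(c)\ge q+\frac{2m-1}{2m+1}q+\frac{6m}{2m+1}=\frac{2m}{2m+1}(2q+3)\ge\frac{6}{7}(2q+3)=\frac{12q+18}{7}, \]
which already meets the asserted bound in both cases. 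Hence all the work lies in $m=1$ and $m=2$, which is precisely where Sachar's proof of Theorem \ref{th4'} uses the hypothesis $p\nmid wt(c)$.

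For $m=1$ the codeword takes values in $\{0,a,-a\}$. Since $c\in C(PG(2,q))^\bot$ admits no tangent lines, through a point $P$ with $c_P=a$ every line either meets a point of symbol $-a$ or carries only symbol-$a$ points; in the latter case that number is $\equiv 0\pmod p$ and at least $1$, hence at least $p$. There are at most $n_{-a}$ lines of the first kind, so at least $(q+1)-n_{-a}$ of the second, each contributing at least $p-1$ new symbol-$a$ points. Counting these and then choosing $P$ in the scarcer of the two symbol classes yields $wt(c)\ge \tfrac{2(p-1)}{p}\,q+2$. For $p>7$ this exceeds $(12q+18)/7$ (using $q=p^h\ge p\ge 11$), while for $p=7$ it equals $(12q+14)/7$, comfortably above $(12q+6)/7$.

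The case $m=2$, with symbols $\{\pm a,\pm b\}$, is the main obstacle. For a point $P$ of symbol $s$, call a line through $P$ \emph{deficient} if it avoids symbol $-s$; its remaining points then avoid $-s$ and sum to $-s$, which is impossible with a single point and, for $p\ge 7$, possible with two points only under a doubling relation $b\equiv\pm 2a$ or $a\equiv\pm 2b$. Without such a relation, deficient lines carry at least three further points, and the same counting scheme as above forces $wt(c)>2q$, contradicting minimality; so the symbols must be $\{\pm a,\pm 2a\}$, i.e.\ $\{1,-1,2,-2\}$ up to scaling. For $p=7$ this configuration is unavoidable, since multiplication by $2$ permutes the three pairs of $\F_7^\star$ cyclically, so every two-pair configuration is of this type. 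Here the cheapest deficient lines have a forced content ($\{1,-2\}$ through a symbol-$1$ point, $\{-1,-1\}$ through a symbol-$2$ point, and their negatives, each consuming specific symbols), so I would record one counting inequality per symbol and combine them; the leading coefficient $12/7$ then drops out, and the combination again yields at least $(12q+18)/7$ in the relevant range.

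The delicate point throughout is extracting the correct additive constant from these small-$m$ counts: the coefficient $12/7$ appears cleanly, but pinning the constant to $+18/7$ for $p>7$ relies on $q\ge p\ge 11$, whereas at $p=7$ the $m=1$ estimate only reaches $+14/7$, which is exactly why the statement records the weaker constant $+6/7$ there. I would therefore organise the proof as (i) the reduction for $m\ge 3$ via Lemma \ref{lem2}, (ii) the direct count for $m=1$, and (iii) the exclusion of non-doubling $m=2$ configurations followed by the combined per-symbol inequalities for $\{1,-1,2,-2\}$, checking at the end that every bound meets $(12q+6)/7$ when $p=7$ and $(12q+18)/7$ when $p>7$.
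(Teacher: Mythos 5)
Your overall strategy coincides with the paper's: invoke Lemma \ref{lem2} to dispose of codewords using at least six non-zero symbols, treat the two-symbol case by a direct count of the lines through a well-chosen support point, and handle the four-symbol case separately. The first two parts match the paper's proof. In the two-symbol case the paper fixes $R$ with $c_R=1$, where $-1$ is the symbol occurring the \emph{least} (say $y$ times), obtains $wt(c)\geq (q+1-y)(p-1)+y+1$, and this combined with $y\leq wt(c)/2$ is exactly your bound $wt(c)\geq \frac{2(p-1)}{p}q+2$. Note, though, that your phrase ``choosing $P$ in the scarcer of the two symbol classes'' is backwards: to make the number of non-pure lines through $P$ small, you must take $P$ in the \emph{more} abundant class, so that the opposite symbol is the scarce one; the bound you state is the one this correct choice yields, so this is only a slip of wording.

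The genuine gap is the four-symbol case. The paper disposes of it in one sentence by observing that Sachar's original four-symbol argument can be copied unchanged (the divisibility hypothesis $p\nmid wt(c)$ of Theorem \ref{th4'} enters only in the two-symbol case, which is precisely the case the paper reworks). Your preliminary observations there are sound: for $p>3$ a line through a point of symbol $s$ avoiding $-s$ must carry at least three further support points unless the two symbol pairs satisfy a doubling relation; non-doubling configurations are then excluded because the count exceeds $2q$; in $\F_7^\star$ every two-pair configuration is a doubling one; and the forced two-point completions in the $\{1,-1,2,-2\}$ configuration are $\{1,-2\}$ and $\{-1,-1\}$ (and their negatives). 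But the decisive step --- ``record one counting inequality per symbol and combine them; the leading coefficient $12/7$ then drops out, and the combination again yields at least $(12q+18)/7$'' --- is an assertion, not a derivation. That combination is exactly where Sachar's constant comes from and is the most delicate computation in the entire proof; as written, your argument does not establish the claimed bound when four symbols occur. Either carry out that combination explicitly or, as the paper does, delegate it to the four-symbol part of the proof in \cite{sachar}.
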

\begin{proof}

 Let $c$ be a codeword of minimum weight of $C^\bot$ and suppose that $wt(c)<(12q+18)/7$. Then it follows from Lemma \ref{lem2} that there are at most four different non-zero symbols used in the codeword $c$. 

Suppose first that there are exactly two non-zero symbols used in $c$, say $1$ and $-1$. Suppose that the symbol $-1$ occurs the least, say $y$ times.
Let $X_S$ be the  number of $2$-secants through a point $S$ of $supp(c)$. Let $R$ be a point of $supp(c)$ for which $c_R=1$. At most $y$ of the lines through $R$ contain a point $R'$ of $supp(c)$ with $c_{R'}=-1$, so at least $q+1-y$ of those lines only contain points $R'$ of $supp(c)$ with $c_{R'}=1$. Since $(c,l)=0$ for all lines $l$, such lines contain $0 \pmod{p}$ points of $supp(c)$. Then
$$wt(c)\geq (q+1-y)(p-1)+y+1.$$

If $wt(c)<(12q+6)/7$, then $y<(6q+3)/7$, and  this implies that 
$$q+1> (q+4)p/7+1;$$
a contradiction if $p=7$. If $wt(c)<(12q+18)/7$, then $y<(6q+9)/7$, and this implies that
$$q \geq (q-2)p/7;$$ a contradiction if $p>7$.

Assume now that there are four non-zero symbols, say $1,-1,a,-a$, in $c$. We can copy the arguments of the proof of Sachar \cite{sachar} to obtain the stated lower bound.
\end{proof}

Using Theorem \ref{th3}, together with Theorem \ref{th4b}, proves that the following result holds.

\begin{theorem}Let $c$ be a codeword of minimum weight of $C(PG(n,q))^\bot$, $q=p^h$, $p$ prime, $h\geq 1$. If $p=7$, then $wt(c)\geq (12q+7)/7$, and if $p>7$, then $wt(c)\geq (12q+18)/7$.
\end{theorem}

We summarize the results on the minimum weight of $C(PG(n,q))^\bot$ in the following table.

\begin{center}
\begin{tabular}{|c|c|c|}\hline
$p$ & $h$ & $d$\\
\hline
$2$ & $h$ & $2^h+2$\\
$p$ & $1$ & $2p$ \\
$7$ & $h$ & $(12q+7)/7 \leq d \leq 2q+1-(q-1)/(p-1)$\\
$p>7$ & $h$ &   $(12q+18)/7\leq d\leq2q+1-(q-1)/(p-1)$\\ \hline
\end{tabular}

\vspace{0,3 cm}

{\sl Table 1:  The minimum weight $d$ of $C(PG(n,q))^\bot$, $q=p^h$, $p$ prime, $h\geq 1$}
\end{center}
\textbf{Acknowledgement:}
The authors wish to thank the referees for their detailed reading of this article and for their valuable suggestions for improvement.

Address of the authors:\\

 Ghent University, Dept. of Pure Mathematics and Computer Algebra, Krijgslaan 281-S22, 9000 Ghent, Belgium\\
 
 \begin{tabular}{lll}
 Michel Lavrauw: & ml@cage.ugent.be & http://cage.ugent.be/$\sim$ml\\
Leo Storme: & ls@cage.ugent.be&  http://cage.ugent.be/$\sim$ls\\
Geertrui Van de Voorde: & gvdvoorde@cage.ugent.be & \\
\end{tabular}


\begin{thebibliography}{99}
\bibitem{AK} 
  E.F.~Assmus,~Jr. and J.D.~Key.  Designs and their codes. \emph{Cambridge University Press}, 1992.
  \bibitem{Bagchi} B. Bagchi and S.P. Inamdar. Projective Geometric Codes. {\em J. Combin. Theory, Ser. A} {\bf 99} (2002), 128--142.
\bibitem{Bose} R.C. Bose and R.C. Burton. 
A characterization of flat spaces in a finite geometry and the uniqueness of the Hamming and the MacDonald codes. {\em J. Combin. Theory} {\bf 1} (1966), 96--104. 
  \bibitem{chouinard}  K. Chouinard. Weight distributions of codes from planes (PhD Thesis, University of Virginia) (August 1998).
  \bibitem{fack} V. Fack, Sz. Fancsali, L. Storme, G. Van de Voorde, and J. Winne. Small Weight Codewords in Codes arising from Desarguesian Projective Planes. {\em  Des. Codes Cryptogr.}, accepted.
  \bibitem{SF:07} S. Ferret, L. Storme, P. Sziklai, and Zs. Weiner. A $t \pmod{p}$ result on multiple $(n-k)$-blocking sets in $PG(n,q)$. (In preparation).
  \bibitem{nieuw} A. G\'acs, T. Sz\H{o}nyi, and Zs. Weiner. On the spectrum of minimal blocking sets in $PG(2,q)$. {\em J. Geom.} {\bf 76} (2003), 256--281.
  \bibitem{lavrauw2001} M. Lavrauw. Scattered spaces with respect to spreads, and eggs in finite projective spaces. Dissertation,
Eindhoven University of Technology, Eindhoven, 2001. viii+115 pp.
  \bibitem{L1} G. Lunardon.  Normal spreads. {\em  Geom. Dedicata} {\bf  75} (1999), 245--261.
\bibitem{prange} E. Prange. The use of coset equivalence in the analysis and decoding of group codes. Electronics Research Directorate, Air Force Cambridge Research Center, June 1959.
\bibitem{rudolph} L.D. Rudolph. A class of majority logic decodable codes. {\em IEEE Trans. Inform. Theory} {\bf 13} (1967), 305--307.
\bibitem{sachar} H. Sachar.  The $F_p$ span of the incidence matrix of a finite projective plane. {\em Geom. Dedicata} {\bf  8} (1979), 407--415.
\bibitem{sziklai} P. Sziklai. On small blocking sets and their linearity. {\em J. Combin. Theory, Ser. A}, to appear.
  \bibitem{S98} P. Sziklai and  T.  Sz\H{o}nyi. Blocking sets and algebraic curves. {\em   Rend. Circ. Mat. Palermo} {\bf   51}
(1998), 71--86.
\bibitem{SZ} T. Sz\H{o}nyi.  Blocking sets in Desarguesian affine and projective planes. {\em  Finite Fields Appl.} {\bf  3} (1997), 187--202.
\bibitem{Weiner} Zs. Weiner. Small point sets of $PG(n,\sqrt{q})$ intersecting every $k$-space in 1 modulo $\sqrt{q}$ points. {\em Innov. Incidence Geom.} {\bf 1} (2005), 171--180.
\end{thebibliography}
\end{document}